\DeclareMathOperator{\Mod}{-Mod}
\DeclareMathOperator{\Ann}{Ann}
\DeclareMathOperator{\spec}{Spec}
\DeclareMathOperator{\pt}{pt}
\DeclareMathOperator{\sob}{sob}
\DeclareMathOperator{\mx}{Max}
\DeclareMathOperator{\prt}{Prt}
\DeclareMathOperator{\Hom}{Hom}
\DeclareMathOperator{\End}{End}
\theoremstyle{plain}
\newtheorem*{theorem*}{Theorem}
\newtheorem{thm}{Theorem}[section]
\newtheorem{cor}[thm]{Corollary}
\newtheorem{lem}[thm]{Lemma}
\newtheorem{prop}[thm]{Proposition}
\theoremstyle{definition}
\newtheorem{dfn}[thm]{Definition}
\newtheorem{obs}[thm]{Remark}
\newtheorem{ej}[thm]{Example}
\newcommand{\sm}{\sigma[M]}
\newcommand{\annm}{Ann_M}
\begin{document}

\title[Attaching topological spaces to a module]{Attaching topological spaces to a module (I): \\ Sobriety and spatiality}


\author[Medina,\\Morales,\\Sandoval,\\ Zald\'ivar]{Mauricio Medina B\'arcenas\\Lorena Morales Callejas \\ Martha Lizbeth Shaid Sandoval Miranda\\ \'Angel Zald\'ivar Corichi }

\begin{abstract}
In this paper we study some frames associated to an $R$-module $M$. We define semiprimitive submodules and we prove that they form an spatial frame canonically isomorphic to the topology of $\mx(M)$. We characterize the soberness of $\mx(M)$ in terms of the point space of that frame. Beside of this, we study the regularity of an spatial frame associated to $M$ given by annihilator conditions.  
\end{abstract}

\maketitle

\section{Introduction}\label{sec1}
For the study of a module categories over a unitary ring one technique is through the examination of complete lattices of submodules, such as in \cite{albu2014topics}, \cite{albu2014osofsky}, \cite{simmonslattice} and \cite{mauquantale}. Following this idea, we give a module-theoretical counterpart  of the ring-theoretical examination developed by  H. Simmons in \cite{simmons1989compact} and \cite{simmonssome}. We observe some point-free topology aspects of certain frames that we construct trough the manuscript and that consideration eventually leads to the construction of some spatial frames, that is, they behave like a topology (\cite{johnstone1986stone}, \cite{simmons2001point} and \cite{picado2012frames}).

In order to obtain these frames and prove their spatiality, we make use of results and techiniques arised from lattice theory  as in \cite{rosenthal1990quantales}, \cite{mauquantale} and \cite{simmonstwo}, and the usual algebraic techniques developed in categories, in particular those for the category $\sigma[M]$ (see \ref{subsecmod}) studied in \cite{wisbauerfoundations}, \cite{BicanPr}, \cite{beachy2002m}. Besides these tools, we will introduce the point-free topology perspective applied to frames provided by modules as in \cite{mauquantale}. 

In \cite{mauquantale}, it was studied the prime spectrum of a module as well as a generalization of quantales, namely quasi-quantales making use of the techniques mentioned above. One of the main results in that article is that the frame of semiprime modules is an spatial frame like in the classical case of commutative ring theory, \cite[Proposition 4.27]{mauquantale}. In this manuscript we continue this study for the case of the space of (fully invariant) maximal submodules of a given module. It should be noted that many of the results will use the theory developed in \cite{mauquantale}.

In this article we study primitive submodules of a module $M$, which are just annihilators of simple modules in $\sigma[M]$, and so will get the frame of semiprimitive submodules which turns out to be spatial. Moreover this one is isomorphic to the frame of open sets of the space of maximal submodules of $M$ with the hull-kernel topology (\cite{simmons2001point}, \cite{rosenthal1990quantales}). An application of the above result is that for a pm-module (\ref{pmmodule}), the space of maximal submodules concides with that of its primitive submodules and the point space of the frame of semiprimitive submodules, see Proposition \ref{pmpt}. As in the case of commutative rings, we study the sobriety of the space of prime submodules and the space of maximal submodules of a given module. For instance, we get necessary and sufficient conditions for the sobriety on the space of maximal submodules.

Since the idea of annihilators of submodules is present in the study of these frames, following the idea of \cite{simmonssome} we introduce the frame $\Psi(M)$ for a module $M$ which is given in terms of annihilators ( see \ref{psiM}) this frame turns to be spatial and we give sufficient conditions for it to be regular.

The interest of the sudy of these aspects comes from the neccesity of triyng to give a module-theoretic counterpart of the ring-theoretic aspect of sheaves representations of \cite{borceux1984sheaf}, \cite{borceux1983recovering} and \cite{borceuxalgebra} and also the localic point of view of these representations given in \cite{simmons1989compact} and \cite{simmons1985sheaf}.

The organization of the paper is as follows: Section \ref{sec1} is this introduction, Section \ref{sec2} provides the necessary (perhaps not in full detail) material that is needed for the reading of the next sections. Section \ref{sec3} is concerned with the first spatial frame associated to a module, the frame of \emph{ semiprimitve submodules}, we examinate its \emph{point space} and we see when it coincides with the \emph{primitive submodules}.
In Section \ref{sec4} it is proved the \emph{sobriety} of the space $\spec(M)$ (the space of prime submodules) and we give necessary and sufficient conditions for $\mx(M)$ to be a sober space.
In section \ref{sec5} we introduce a set of submodules $\Psi(M)$ defined by annihilator conditions. We prove that this set is an spatial frame and we explore its regularity. 

\section{Preliminaries.}\label{sec2}

\subsection{Idiomatic-quantale preliminaries}\label{subsecidiomq}

First we give the idiomatic preliminaries that are necessary for our analysis. 

 \begin{dfn}\label{idiom}
An \emph{idiom} $(A,\leq,\bigvee,\wedge,\bar{1},\underline{0})$ is a upper-continuous and modular lattice, that is, $A$ is a complete lattice that satisfies the following distributive laws: 
\[a\wedge (\bigvee X)=\bigvee\{a\wedge x\mid x\in X\}\leqno({\rm IDL})\]
holds for all $a\in A$  and $X\subseteq A$ directed; and 
\[a\leq b\Rightarrow (a\vee c)\wedge b=a\vee(c\wedge b)\leqno({\rm ML})\]
for all $a,b,c\in A$. 
\end{dfn}
A good account of the many uses of these lattices can be found in \cite{simmonsintroduction} and \cite{simmonscantor}.

Examples of idioms are given by submodules of a module over a ring. First some conventions, in the whole text $R$ will be an associative ring with identity, not necessarily commutative. The word \emph{ideal} will mean two-sided ideal, unless explicitly stated the side (left or right ideal). All modules are unital and left modules. Let $M$ be an $R$-module, a submodule $N$ of $M$ is denoted by $N\leq M$, whereas we write $N<M$ when $N$ is a proper submodule of $M$. Recall that $N\leq M$ is called \emph{fully invariant submodule}, denoted by $N\leq_{fi}M$, if for every endomorphism $f\in End_R(M)$, it follows that $f(N)\subseteq N.$ Denote by 
\[\Lambda(M)=\{N\mid N\leq M\}\]
\[\Lambda^{fi}(M)=\{N\mid N\leq_{fi}M\}\]

It is straightforward to see that $\Lambda(M)$ and $\Lambda^{fi}(M)$ are idioms, in particular the left (right) ideals of the ring $R$, $\Lambda(R)$ constitutes an idiom.

Another important example come from topology, consider any topological space $S$ and denote by $\mathcal{O}(S)$ its topology, then it is know that $\mathcal{O}(S)$ is a complete lattice with satisfies the distributivity laws of definition \ref{idiom}, in fact it satisfies the arbitrary distributivity law not just for directed subsets. This kind of lattices are called \emph{frames}.

\begin{dfn}
A complete lattice $(A,\leq,\bigvee,\wedge,\bar{1},\underline{0})$ is a \emph{frame}, if $A$ satisfies
\[a\wedge (\bigvee X)=\bigvee\{a\wedge x\mid x\in X\}\leqno({\rm FDL})\]
for all $a\in A$ and $X\subseteq A$ any subset.
\end{dfn}

Frames are certain kind of idioms:

\begin{prop}[\cite{simmonsintroduction} Lemma 1.6]\label{disid}
Let $A$ be an idiom, $A$ is a frame if and only if $A$ is a distributive idiom, that is, 
\[a\vee(b\wedge c)=(a\vee b)\wedge (a\vee c)\]
or equivalently 
\[a\wedge(b\vee c)=(a\wedge b)\vee (a\wedge c)\]  
holds for every $a, b, c\in A$.
\end{prop}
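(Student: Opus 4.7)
The plan is to handle the two implications separately. The forward direction is essentially a tautology: if $A$ is a frame then, specializing (FDL) to $X=\{b,c\}$, one obtains $a\wedge(b\vee c)=(a\wedge b)\vee(a\wedge c)$, i.e., $A$ is a distributive idiom.

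For the converse, I would assume only the three-variable distributive law together with (IDL) and derive (FDL) for an arbitrary subset $X\subseteq A$ via a two-stage bootstrap. First, a routine induction on $n$ upgrades three-variable distributivity to finite distributivity,
\[
a \wedge (b_1 \vee \cdots \vee b_n) \;=\; (a\wedge b_1) \vee \cdots \vee (a\wedge b_n).
\]
Second, given an arbitrary $X\subseteq A$, I would pass to the auxiliary family $D = \{\bigvee F \mid F\subseteq X \text{ finite}\}$. This family is directed (the join of two finite joins is again a finite join, over the union of the index sets) and satisfies $\bigvee D = \bigvee X$. Applying (IDL) to $D$ gives
\[
a\wedge\bigvee X \;=\; a\wedge\bigvee D \;=\; \bigvee\{a\wedge\bigvee F \mid F\subseteq X \text{ finite}\};
\]
finite distributivity then rewrites each $a\wedge\bigvee F$ as $\bigvee\{a\wedge x\mid x\in F\}$, and rebracketing the resulting iterated join produces $\bigvee\{a\wedge x\mid x\in X\}$, which is (FDL).

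The ``or equivalently'' clause in the statement records a classical lattice-theoretic fact: in any lattice, the two three-variable distributive laws $a\vee(b\wedge c)=(a\vee b)\wedge(a\vee c)$ and $a\wedge(b\vee c)=(a\wedge b)\vee(a\wedge c)$ are equivalent via short manipulations using absorption; no idiom hypotheses are needed. I would either cite this or dispose of it in a line.

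I do not anticipate a real obstacle: the argument is the standard ``directed joins of finite joins'' trick. The only point requiring minor care is verifying that the family $D$ is genuinely directed and has the same supremum as $X$, so that (IDL) can be brought to bear; once that bookkeeping is in place, finite distributivity does the rest. Notably, neither modularity (ML) nor the ambient completeness beyond what (IDL) already packages is invoked in this direction, which makes the proof essentially a statement about upper-continuous distributive lattices.
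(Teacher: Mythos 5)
Your proof is correct. The paper itself gives no argument for this proposition --- it is stated with a bare citation to Lemma 1.6 of Simmons' notes --- so there is nothing internal to compare against; your two-stage bootstrap (induct to finite distributivity, then realize $\bigvee X$ as the directed join of finite sub-joins and invoke (IDL)) is exactly the standard argument one would find in the cited source, and your observations that modularity is not needed and that the two three-variable laws are equivalent by absorption in any lattice are both accurate.
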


Thus idioms are a generalization of frames. There exists other structures that are generalizations of frames.

\begin{dfn}\label{quan}
A complete lattice $A$ is a \emph{quantale} if $A$ has a binary associative operation $\cdot\colon A\times A\rightarrow A$ such that 
\[l\left(\bigvee X\right)=\bigvee\left\{lx\mid x\in X\right\} , \mbox{ and } \leqno({\rm LQ}) \] 
 \[\left(\bigvee X\right)r=\bigvee\left\{xr\mid x\in X\right\}\leqno({\rm RQ})\] 
hold for all $l, r\in A$ and $X\subseteq A$.  
\end{dfn}
See \cite{rosenthal1990quantales} for theory of quantales.
The canonical example of a quantale is provided by $\Lambda(R)$ with $R$ a ring, with the usual product of ideals and as we will see for a module $M$ (with some extra properties) we can give to $\Lambda(M)$ a structure of quantale. By \emph{idiomatic-quantale} we will mean an idiom that is also a quantale. 

One of the main tools for the study of all of these structures are \emph{adjoint situations}.

\begin{dfn}
A morphism of $\bigvee$-semilattices, $f\colon A\rightarrow B$ is a monotone function that preserves arbitrary suprema, that is, $f[\bigvee X]=\bigvee f[X]$ for all $X\subseteq A$.
\end{dfn} 

We are going to use next fact repetitively.
If the partial ordered set $A$ is a $\bigvee$-semilattice
we have the following important fact.

\begin{prop}\label{hullker}
Given any morphism of $\bigvee$-semilattices, $f\colon A\rightarrow B$ there exits  $f_*\colon B\rightarrow  A$ such that 
\[f^{*}(a)\leq b\Leftrightarrow a\leq f_{*}(b)\]
that is, $f$ and $f_*$ form an adjunction 
\[\xymatrix{ A\ar@<-.7ex> @/   _.5pc/[r]_--{f^{*}} &B\ar@<-.7ex>@/ _.5pc/[l]_--{f_{*}} }\]
\end{prop}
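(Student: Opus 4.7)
The plan is to follow the standard adjoint functor recipe for posets. Since $A$ is a $\bigvee$-semilattice, for each $b\in B$ the set $\{a\in A \mid f^*(a)\leq b\}$ has a join in $A$, so I would define the candidate right adjoint explicitly by
\[
f_*(b) := \bigvee\{a\in A \mid f^*(a)\leq b\}.
\]
This formula is essentially forced on us: any right adjoint must satisfy $f_*(b) = \bigvee\{a\mid a\leq f_*(b)\}$, and the biconditional we want to prove rewrites the set on the right exactly as $\{a\mid f^*(a)\leq b\}$. So existence of the map, not its uniqueness, is the only thing at stake.

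Next I would verify the adjunction $f^*(a)\leq b \Leftrightarrow a\leq f_*(b)$. The forward direction is immediate: if $f^*(a)\leq b$ then $a$ belongs to the indexing set for the join defining $f_*(b)$, so $a\leq f_*(b)$ by the universal property of the supremum. For the reverse direction, I would apply $f^*$ to the inequality $a\leq f_*(b)$ (using that any $\bigvee$-preserving map is monotone, via join-preservation on two-element subsets) and then invoke the sup-preservation hypothesis to rewrite
\[
f^*(f_*(b)) = f^*\!\Big(\bigvee\{a'\mid f^*(a')\leq b\}\Big) = \bigvee\{f^*(a')\mid f^*(a')\leq b\},
\]
whose right-hand side is visibly bounded above by $b$.

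There is essentially no obstacle here; the content is purely formal and the result is the classical adjoint functor theorem for posets. The only conceptual point worth flagging is that the $\bigvee$-semilattice hypothesis on $A$ is used twice, once to guarantee the join defining $f_*(b)$ exists, and once (through the sup-preservation property of $f^*$) to pull the supremum through in the reverse implication.
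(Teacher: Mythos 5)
Your proof is correct and complete: the explicit formula $f_*(b)=\bigvee\{a\in A\mid f^*(a)\leq b\}$ together with the two verifications (membership in the indexing set for one direction, monotonicity plus sup-preservation of $f^*$ for the other) is exactly the standard poset-level adjoint functor argument. The paper itself offers no proof, only a citation to the general adjoint functor theorem, and your argument is precisely that theorem specialized to $\bigvee$-semilattices, so there is nothing to reconcile.
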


This is a particular case of the General Adjoint functor theorem, a proof of this can be found in any standard book of category theory for instance \cite[Theorem 6.3.10]{leinster2014basic}.
 
The following is easy to check.

\begin{prop}[Lemma 3.3, \cite{simmonsintroduction}]\label{idad}
From this situation, we can consider the two compositions \[f_{*}f^{*}\colon A\rightarrow A \text{ and } f^{*}f_{*}\colon B\rightarrow B.\] Then:
\begin{enumerate}[\rm(1)]
\item $f_{*}f^{*}$ is an \emph{inflator}, that is, is a monotone and $a\leq f_{*}f^{*}(a)$ for all $a\in A$.
\item $f^{*}f_{*}$ is a \emph{deflator}, that is, is a monotone and $f^{*}f_{*}(b)\leq b$ for all $b\in B$.
\item $f_{*}f^{*}$ and $f^{*}f_{*}$ are idempotent.
\end{enumerate}
\end{prop}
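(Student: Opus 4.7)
The plan is to derive all three statements directly from the adjunction $f^{*}(a)\leq b\Leftrightarrow a\leq f_{*}(b)$, using only the order-theoretic form and monotonicity of $f^{*}$ and $f_{*}$ (which itself follows from the adjunction, so I would note this first).

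For (1), I would take $b=f^{*}(a)$ in the adjunction and apply the trivial inequality $f^{*}(a)\leq f^{*}(a)$ to the left-hand side; this immediately yields $a\leq f_{*}(f^{*}(a))$, which is the inflator inequality. Monotonicity of $f_{*}f^{*}$ is just the composition of two monotone maps. For (2), I would dually set $a=f_{*}(b)$ and apply $f_{*}(b)\leq f_{*}(b)$ to the right-hand side of the adjunction, obtaining $f^{*}(f_{*}(b))\leq b$, which is the deflator inequality, and again monotonicity of the composite is immediate.

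For the idempotence in (3), one inequality in each case is free from (1) or (2). Applying (1) to the element $f_{*}f^{*}(a)$ gives $f_{*}f^{*}(a)\leq f_{*}f^{*}(f_{*}f^{*}(a))$. For the reverse inequality, I would use (2) in the form $f^{*}f_{*}(f^{*}(a))\leq f^{*}(a)$ and then apply the monotone map $f_{*}$ to both sides to deduce $f_{*}f^{*}f_{*}f^{*}(a)\leq f_{*}f^{*}(a)$. Antisymmetry gives $f_{*}f^{*}\circ f_{*}f^{*}=f_{*}f^{*}$. The argument for $f^{*}f_{*}$ is symmetric: apply (2) to $f^{*}f_{*}(b)$ for one direction, and push $f_{*}f^{*}(b)\geq b$ through $f^{*}$ for the other.

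There is no real obstacle here; the only point of care is not to circularly invoke monotonicity of $f_{*}$. So I would open the proof with a one-line observation that $a\leq a'$ combined with $a'\leq f_{*}f^{*}(a')$ and the adjunction gives $f^{*}(a)\leq f^{*}(a')$, and dually for $f_{*}$; after that, the three items fall out in the order (1), (2), (3) as described above.
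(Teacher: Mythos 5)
Your argument is correct and is the standard derivation of all three items from the adjunction; the paper itself gives no proof, simply citing Simmons, and your handling of the potential circularity (proving the unit/counit inequalities from reflexivity alone before deducing monotonicity) is exactly right. The only cosmetic slip is in the last step for $f^{*}f_{*}$: the inequality to push through $f^{*}$ should be $f_{*}(b)\leq f_{*}f^{*}\bigl(f_{*}(b)\bigr)$, i.e.\ item (1) applied to $f_{*}(b)\in A$, since $f_{*}f^{*}$ is not defined on elements of $B$.
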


Now if we are dealing with idioms, and the morphism $f\colon A\rightarrow B$ also satisfies $f(a\wedge b)=f(a)\wedge f(b)$ then from Proposition \ref{idad} we have:

\begin{prop}[Lemma 3.12, \cite{simmonsintroduction}]\label{idadd}
The closure operators $f_{*}f^{*}$ and $f^{*}f_{*}$ satisfies: \[f_{*}f^{*}(a\wedge b)=f_{*}f^{*}(a)\wedge f_{*}f^{*}(b)\] and \[f^{*}f_{*}(a\wedge b)=f^{*}f_{*}(a)\wedge f^{*}f_{*}(b)\]
\end{prop}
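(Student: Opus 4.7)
The plan is to observe that both constituent maps $f^{*}$ and $f_{*}$ preserve binary meets, and then obtain the two claimed identities by straightforward two-step composition.

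First, I would note that $f^{*}$ preserves binary meets by hypothesis: this is exactly the assumption $f(a\wedge b)=f(a)\wedge f(b)$ placed on $f$ just before the statement (recall we are identifying $f$ with $f^{*}$ from Proposition~\ref{hullker}). Second, I would show that $f_{*}$ also preserves binary meets. This is a general fact about right adjoints in the poset setting and follows purely from the adjunction $f^{*}\dashv f_{*}$ of Proposition~\ref{hullker}: for $b_1,b_2\in B$ and any $a\in A$,
\[
a\leq f_{*}(b_1\wedge b_2)\iff f^{*}(a)\leq b_1\wedge b_2\iff f^{*}(a)\leq b_1\text{ and }f^{*}(a)\leq b_2,
\]
which in turn is equivalent to $a\leq f_{*}(b_1)$ and $a\leq f_{*}(b_2)$, i.e.\ $a\leq f_{*}(b_1)\wedge f_{*}(b_2)$. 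Taking $a=f_{*}(b_1)\wedge f_{*}(b_2)$ and $a=f_{*}(b_1\wedge b_2)$ in turn yields $f_{*}(b_1\wedge b_2)=f_{*}(b_1)\wedge f_{*}(b_2)$.

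With these two facts in hand, both identities are immediate. For the first one, compute
\[
f_{*}f^{*}(a\wedge b)=f_{*}\bigl(f^{*}(a)\wedge f^{*}(b)\bigr)=f_{*}f^{*}(a)\wedge f_{*}f^{*}(b),
\]
where the first equality uses meet-preservation of $f^{*}$ and the second uses meet-preservation of $f_{*}$. Symmetrically, for the second identity, apply $f_{*}$ first (using its meet-preservation) and then $f^{*}$ (using the hypothesis), obtaining
\[
f^{*}f_{*}(a\wedge b)=f^{*}\bigl(f_{*}(a)\wedge f_{*}(b)\bigr)=f^{*}f_{*}(a)\wedge f^{*}f_{*}(b).
\]

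The only non-cosmetic step is the adjunction argument that $f_{*}$ preserves binary meets; the rest is formal. There is no real obstacle here, since the hypothesis on $f$ and the adjunction of Proposition~\ref{hullker} are tailored to make each closure operator factor as a composition of two meet-preserving maps.
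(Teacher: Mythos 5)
Your proof is correct and is essentially the standard argument behind the cited Lemma 3.12 of \cite{simmonsintroduction}, which the paper invokes without reproducing: $f^{*}$ preserves binary meets by hypothesis, $f_{*}$ preserves them because right adjoints between complete lattices preserve infima, and the two identities follow by composing. No gaps.
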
 

Next we will review the above properties in the context of idioms and quantales.
A good account of all these facts and their proofs are in \cite{simmonsintroduction} and \cite{rosenthal1990quantales}.

\begin{dfn}\label{nuc}
Let $A$ be an idiom. A \emph{nucleus} on $A$ is a function $j\colon A\rightarrow A$ such that:\begin{itemize}
\item[(1)] $j$ is an inflator.

\item[(2)] $j$ is idempotent.

\item[(3)] $j$ is a \emph{prenucleus}, that is, $j(a\wedge b)=j(a)\wedge j(b)$.

\end{itemize}
\end{dfn}

There also exists the quantale counterpart of this notions:

\begin{dfn}\label{quantic}

A \emph{quantic} nucleus $k$ over a quantale $A$ is an idempotent inflator such that $k(a)k(b)\leq k(ab)$ for all $a, b\in A$.
\end{dfn}

As we are dealing with idiomatic-quantales, we require that our nuclei control the product of the quantale and the $\wedge$ in some nice way, thus we have the following improvement,
 
\begin{dfn}\label{quantic2}
A \emph{multiplicative} nucleus, $k\colon A\rightarrow A$ is an idempotent inflator such that $k(a)\wedge k(b)=k(ab)$.
\end{dfn}

There are several uses of these operators in literature for example they provide a relative version of $A$, that is:

\begin{prop}\label{fix}
Let $A$ be an idiomatic-quantale, let $j$ be a nucleus (or a multiplicative nucleus or a quantic nucleus) then consider the set \[A_{j}=\{a\in A\mid j(a)=a\}\] the set of all \emph{fixed points} of $j$. Then
\begin{enumerate}[\rm(1)]
\item If $j$ is a nucleus then $A_{j}$ is an idiom.
\item If $j$ is a quantic nucleus then $A_{j}$ is a quantale.
\item If $j$ is a multiplicative nucleus and the following inequality holds \[j(a)j(a)\leq j(a)\wedge j(b)\] for all $a, b\in A$ then $A_{j}$ is an idiomatic-quantale.
\end{enumerate}
\end{prop}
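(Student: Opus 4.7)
The plan is to deduce all three parts from the general principle that, for any idempotent inflator $j$ on a complete lattice $A$, the fixed-point set $A_j$ is itself a complete lattice: meets are inherited from $A$ (for $X\subseteq A_j$ one has $j(\bigwedge X)\leq j(x)=x$ for every $x\in X$, hence $j(\bigwedge X)\leq \bigwedge X$, while the reverse is just the inflator property), and joins are given by $\bigvee_j X:=j(\bigvee_A X)$. With this basic description in place, each part reduces to checking that the appropriate extra laws---idiom, quantale, or both---descend to $A_j$.

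For (1), with $j$ a nucleus, upper continuity in $A_j$ reduces, for $a\in A_j$ and a directed $X\subseteq A_j$, to the computation
\[a\wedge j(\bigvee\nolimits_A X)=j(a)\wedge j(\bigvee\nolimits_A X)=j(a\wedge \bigvee\nolimits_A X)=j(\bigvee\nolimits_A\{a\wedge x\mid x\in X\}),\]
where the middle equality is the prenucleus identity of Definition~\ref{nuc}(3) applied to $a=j(a)$, and the last uses (IDL) in $A$. The right-hand side, up to applying $j$, is then exactly $\bigvee_j\{a\wedge x\mid x\in X\}$. The modular law (ML) for $A_j$ is handled in the same spirit, combining (ML) in $A$ with the prenucleus identity and the fact that meets on $A_j$ agree with those on $A$.

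For (2), with $k$ a quantic nucleus, I define the product on $A_k$ by $a\cdot_k b:=k(ab)$. The technical core is the cancellation lemma $k(k(x)y)=k(xy)=k(xk(y))$, which I would prove by sandwiching $xy\leq k(x)y\leq k(x)k(y)\leq k(xy)$ and applying $k$ throughout. This gives associativity of $\cdot_k$ at once, and the quantale laws (LQ), (RQ) on $A_k$ reduce, after stripping a single application of $k$, to the corresponding laws in $A$.

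For (3), the hypotheses are arranged so that a multiplicative nucleus $j$ satisfying the displayed inequality is simultaneously a nucleus (the multiplicativity $j(ab)=j(a)\wedge j(b)$ combined with the displayed inequality, which places $ab$ below $a\wedge b$ in the relevant range, forces $j$ to be a prenucleus) and a quantic nucleus (since the displayed inequality reads $j(a)j(b)\leq j(a)\wedge j(b)=j(ab)$, taking the intended correction of the apparent typo $j(a)j(a)$). Parts (1) and (2) then both apply and deliver $A_j$ simultaneously with its idiom and its quantale structure; since the paper's notion of idiomatic-quantale is by definition an idiom that is also a quantale, nothing further is required. The main obstacle I anticipate is precisely the book-keeping in (3): isolating what each hypothesis contributes (multiplicativity versus the displayed inequality versus the ambient idiomatic-quantale laws of $A$) so that the two implications land cleanly and the conjunction of (1) and (2) can be invoked without slippage.
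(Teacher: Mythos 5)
The paper itself gives no proof of this proposition (it defers to \cite{simmonsintroduction} and to Propositions 3.9 and 3.10 of \cite{mauquantale}), so your argument has to stand on its own. Parts (1) and (2) do: the description of $A_j$ as a complete lattice with inherited meets and joins $\bigvee_j X=j(\bigvee X)$, the prenucleus computation for (IDL) and (ML), and the cancellation identity $k(k(x)y)=k(xy)=k(xk(y))$ giving associativity and (LQ), (RQ) for $a\cdot_k b:=k(ab)$ are exactly the standard route, and your reading of the displayed inequality as a typo for $j(a)j(b)\leq j(a)\wedge j(b)$ is surely the intended one.

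The gap is in (3), in the sentence claiming that multiplicativity together with the displayed inequality ``places $ab$ below $a\wedge b$'' and hence forces $j$ to be a prenucleus. What the corrected inequality actually yields, combined with $j(a)\wedge j(b)=j(ab)$ and monotonicity of the product, is $ab\leq j(a)j(b)\leq j(a)\wedge j(b)=j(ab)$ --- nothing beyond the inflator property. In a general idiomatic-quantale there is no axiom forcing $ab\leq a\wedge b$, so the inclusion $j(a)\wedge j(b)=j(ab)\leq j(a\wedge b)$ that the prenucleus identity requires is not available, and part (1) cannot be invoked for such a $j$. The conclusion of (3) is nevertheless true, by a mechanism that bypasses the prenucleus property entirely: for $a,b\in A_j$ one has $a\wedge b=j(a)\wedge j(b)=j(ab)$, so on $A_j$ the meet \emph{coincides} with the induced product of part (2). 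Since the product of $A$ distributes over arbitrary joins, the computation $a\wedge\bigvee_j X=j\bigl(a\cdot j(\bigvee X)\bigr)=j\bigl(a\bigvee X\bigr)=j\bigl(\bigvee\{ax\mid x\in X\}\bigr)=\bigvee_j\{a\wedge x\mid x\in X\}$ (the second equality sandwiches $a\bigvee X\leq aj(\bigvee X)\leq j(a)j(\bigvee X)\leq j(a\bigvee X)$ using the quantic inequality, and the last uses $ax\leq a\wedge x=j(ax)$) shows that $A_j$ satisfies the full frame distributive law. Hence $A_j$ is a frame, in particular a distributive --- so modular and upper-continuous --- idiom, and together with your part (2) it is an idiomatic-quantale. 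You should replace the ``$j$ is a prenucleus'' step by this identification of meet with product on the fixed points.
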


The deatails about these facts can be found in \cite{simmonsintroduction} and in a more general context in \cite[Propositions 3.9, 3.10]{mauquantale} where the notion of \emph{quasi-quantale} is introduced,	

\begin{dfn}\label{a}
	Let $A$ be a $\bigvee$-semilattice. We say that $A$ is a \emph{quasi-quantale} if it has an associative product $A\times{A}\rightarrow{A}$ such that for all directed subsets $X,Y\subseteq{A}$ and $a\in{A}$: 
	\[(\bigvee X)a=\bigvee\{xa\mid x\in X\},\] and 
	\[a(\bigvee Y)=\bigvee\{ay\mid y\in Y\}.\]
	We say $A$ is a \emph{left (resp. right, resp. bilateral) quasi-quantale} if there exists $e\in{A}$ such that $e(a)=a$ (resp. $(a)e=a$, resp. $e(a)=a=(a)e$) for all $a\in{A}$. 
\end{dfn}

\begin{dfn}\label{d} Let $A$ be a quasi-quantale and $B$ a sub $\bigvee$-semilattice.  We say that $B$ is a {\em subquasi-quantale of $A$} if 
\[(\bigvee{X})a=\bigvee\{xa\mid x\in{X}\}\] and
\[a(\bigvee{Y})=\bigvee\{ay\mid y\in{Y}\},\]
for all directed subsets $X,Y\subseteq{B}$ and $a\in{B}$.
\end{dfn}

In that investigation the authors use this kind of operators to give a radical theory for modules and they observed that this theory is related with some spatial properties of these structures, let us recall some of that material.

As we saw any topological space $S$ determines a frame, its topology $\mathcal{O}(S)$, this defines a functor from the category of topological spaces into the category of frames $\mathcal{O}(\_)\colon Top\rightarrow Frm$. There exists a functor in the other direction:

\begin{dfn}\label{point}
Let $A$ be a frame. An element $p\in A$ is a \emph{point} or a $\wedge$\emph{-irreducible}
if $p\neq 1$ and $a\wedge b\leq p \Rightarrow a\leq p \text{ or } b\leq p.$
\end{dfn}
Denote by $\pt(A)$ the set of all points of $A$.

We can topologize this set as follows, for each $a\in A$ define: \[p\in U_{A}(a)\Leftrightarrow a\nleq p\] for $p\in\pt(A)$. The collection $\mathcal{O}\pt(A)=\{U_{A}(a)\mid a\in A\}$ constitutes a topology for $\pt(A)$. We have a frame morphism \[U_{A}\colon A\rightarrow \mathcal{O}\pt(A)\]
that determines a nucleus on $A$ by Proposition \ref{hullker}, this nucleus or the adjoint situation is called the \emph{hull-kernel adjunction}. With this, the frame $A$ is \emph{spatial} if $U_{A}$ is an injective morphism (hence an isomorphism). 

With some work one can prove that this defines a functor $\pt(\_)\colon Frm\rightarrow Top$ in such way that the pair \vspace{-10pt}
\[\xymatrix{ &Top\ar @/^/[r]^{\mathcal{O}(\_)} & Frm \ar @/^/[l]^{\pt(\_)}} \] 
form a adjoinction. For details see \cite{johnstone1986stone}, \cite{simmonspoint} and \cite{picado2012frames}.

For a (quasi-)quantale there exists a similar way to produce a space. 

\begin{dfn}\label{e}
Let $A$ be a quasi-quantale and $B$ a subquasi-quantale of $A$. An element $1\neq{p}\in{A}$ is a {\em prime element relative to $B$} if whenever $ab\leq{p}$ with $a,b\in{B}$ then $a\leq{p}$ or $b\leq{p}$.
\end{dfn}


Denote $Spec_B(A):=\{p\mid p\text{ is prime in}A\text{ relative to }B  \}$. We can topologize it imitating the process for $\pt(A)$, and obtain the \emph{Zariski-like topology} for $Spec_B(A)$.

\begin{thm}
Let $A$ be a quasiquantale,  $B$ a sub-quasiquantal of $A.$ Then, the following conditions hold.
\begin{enumerate}[\rm(1)]
\item $Spec_B(A)$ is  a topological space where the closed sets are given by  $V(b)=\{p\in Spec_B(A)\mid b\leq p\}$ with  $b\in B.$ Duality, the open sets are give by  $U(b)=\{p\in Spec_B(A)\mid b\not\leq p\}.$ \cite[Proposition 3.18]{mauquantale}
\item For every $b\in B,$ $\mu(b)$ is the greatest element in  $B$ such that  $\mu(b)\leq \bigwedge\{p\in Spec_B(A)\mid p\in V(b)\}.$ \cite[Proposition 3.20]{mauquantale}
\item The $\bigvee-$semilattice morphism $U:B\to \mathcal{O}(Spec_B(A))$  has  a right adjoint $\mathcal{U}_{\ast}: \mathcal{O}(Spec_B(A))\to B$ given by  $\mathcal{U}_{\ast}(W):=\bigvee \{b\in B\mid U(b)\subseteq W\}$ and the  composition $\mu=U_{\ast}\circ U:B\to B$  is a  multiplicative nucleus.  \cite[Proposition 3.21]{mauquantale} 
\item $ B_{\mu}:=\{x\in B\mid \mu(x)=x \}$ is upper continuous. \cite[Corollary 3.22]{mauquantale}
\item If in addition, $B$ is a quasi-quantale such that  $(\bigvee X)a=\bigvee\{xa\mid x\in X\},$ for every  $X\subseteq A$ and  $a\in A,$  then $B_{\mu}$ is a frame. \cite[Corollary 3.11]{mauquantale}
\end{enumerate}
\end{thm}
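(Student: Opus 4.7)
The plan is to treat parts (1)--(3) as the hull--kernel adjunction between $B$ and $\mathcal{O}(Spec_B(A))$, and then to derive (4) and (5) by specialising the general nucleus theory of Propositions \ref{idad}--\ref{fix} to $\mu=U_{*}\circ U$. For (1), I would verify the axioms for closed sets: the intersection identity $\bigcap_i V(b_i)=V(\bigvee_i b_i)$ is immediate from $b_i\leq p$ for all $i$ iff $\bigvee_i b_i\leq p$; the finite-union identity $V(a)\cup V(b)=V(ab)$ uses primality relative to $B$ for the inclusion $\supseteq$ and monotonicity of the product (with the absorption $ab\leq a$ and $ab\leq b$ implicit in the prime condition) for $\subseteq$. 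The extreme cases $V(\bar{1})=\emptyset$ (since primes are strictly below $\bar{1}$) and $V(\underline{0})=Spec_B(A)$ complete the argument, and the $U(b)$'s are just the complements.

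For (2) and (3), I would first check that $U\colon B\to\mathcal{O}(Spec_B(A))$ preserves arbitrary joins: $p\in U(\bigvee X)$ iff $\bigvee X\nleq p$ iff some $x\in X$ satisfies $x\nleq p$ iff $p\in\bigcup_{x\in X}U(x)$. Proposition \ref{hullker} then produces the right adjoint $\mathcal{U}_{*}$ with the stated formula. Unwinding, $\mu(b)=U_{*}U(b)=\bigvee\{c\in B\mid U(c)\subseteq U(b)\}=\bigvee\{c\in B\mid c\leq\bigwedge V(b)\}$, which is by construction the greatest element of $B$ below $\bigwedge V(b)$, proving (2). Proposition \ref{idad} gives that $\mu$ is an idempotent inflator; the multiplicative nucleus identity $\mu(ab)=\mu(a)\wedge\mu(b)$ is obtained from $U(ab)=U(a)\cap U(b)$ (already verified in (1)) together with the adjunction, via Proposition \ref{idadd}.

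For (4), since $\mu$ is a nucleus on the idiom $B$, Proposition \ref{fix}(1) yields that $B_\mu$ is an idiom, hence upper continuous. For (5), the additional hypothesis upgrades the left join-preservation of the product from directed joins to arbitrary ones, which promotes $\mu$ to a multiplicative nucleus in the sense of Definition \ref{quantic2} and places us in the setting of Proposition \ref{fix}(3); together with the fact that $U$ induces a meet-preserving, join-preserving embedding of $B_\mu$ into the frame $\mathcal{O}(Spec_B(A))$, the resulting distributive idiomatic-quantale $B_\mu$ is a frame by Proposition \ref{disid}. The main obstacle throughout is controlling the interaction of the product with non-directed joins: in (1) and (3) primality together with monotonicity substitute for the missing distributivity, and in (5) the additional hypothesis is precisely what closes this gap.
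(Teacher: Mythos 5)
First, note that the paper does not actually prove this theorem: each item is quoted from \cite{mauquantale} with a pointer to the relevant proposition, so your reconstruction is being measured against those external arguments rather than against anything in this manuscript. Your overall plan (hull--kernel adjunction for (1)--(3), nucleus theory for (4)--(5)) is the right one, and parts (2) and (3) are essentially correct, except that the multiplicativity of $\mu$ should be deduced by applying the fact that the right adjoint $U_*$ preserves infima to the identity $U(ab)=U(a)\cap U(b)$, not from Proposition \ref{idadd}, whose hypothesis is that the left adjoint preserves $\wedge$ --- which is not what you verified.

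There are, however, three genuine gaps. (i) In (1), the inclusion $V(a)\cup V(b)\subseteq V(ab)$ needs $ab\leq a$ and $ab\leq b$; this is not ``implicit in the prime condition'' --- primality gives exactly the opposite inclusion $V(ab)\subseteq V(a)\cup V(b)$ --- and it is not among the quasi-quantale axioms of Definition \ref{a}. It does hold in the intended example $\Lambda^{fi}(M)$ (where $K_ML\leq K\cap L$) and is a standing assumption in \cite{mauquantale}, but as written your argument conflates the two inclusions. (ii) In (4) you invoke Proposition \ref{fix}(1), which requires $\mu$ to be a prenucleus, i.e.\ $\mu(a\wedge b)=\mu(a)\wedge\mu(b)$, on an idiom; what (3) delivers is the multiplicative identity $\mu(ab)=\mu(a)\wedge\mu(b)$, and $B$ is only a sub-$\bigvee$-semilattice, not known to be an idiom, so that proposition does not apply. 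Upper continuity of $B_\mu$ has to be computed directly from the directed-join distributivity of the product (this is the content of \cite[Corollary 3.22]{mauquantale}). (iii) In (5), if $U$ really restricted to a join- and finite-meet-preserving embedding of $B_\mu$ into the frame $\mathcal{O}(Spec_B(A))$, the conclusion would follow with no extra hypothesis at all, which signals that this embedding cannot preserve binary meets in general (the obstruction is again $ab\leq a\wedge b$, together with the possible discrepancy between meets computed in $B$ and in $A$). The intended argument is the quantale computation: in $B_\mu$ one has $x\wedge y=\mu(xy)$ and the join of a family $X$ is $\mu(\bigvee X)$, so $a\wedge\mu(\bigvee X)=\mu(a(\bigvee X))=\mu(\bigvee\{ax\mid x\in X\})$, and the middle equality is precisely where the arbitrary-join hypothesis enters; note also that Proposition \ref{fix}(3) concludes only that the fixed-point set is an idiomatic-quantale, not a frame, so citing it does not finish the argument.
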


The following statements are important to prove Theorem \ref{PSI}, they are hard to find in the literature thus for convenience of the reader we provide their proofs.

Our prototype of idiomatic-quantale is $\Lambda(R).$ Note that this lattice is also compactly generated:

\begin{dfn}\label{compac}  Let $A$ be an idiomatic-quantale: 
\begin{enumerate}[\rm(1)]
\item An element $c\in A$ is \emph{compact} \[c\leq \bigvee X\Rightarrow (\exists x\in X)[c\leq x]\] for each $X$ directed subset of $A$. This is equivalent to consider any subset $X$ of $A$ such that $c\leq \bigvee X$ then $c\leq\bigvee F$ for a $F\subseteq X$ finite.
\item $A$ is \emph{compactly generated} if for each $a\in A$ we have $a=\bigvee C$ for some set $C$ of compact elements.
\end{enumerate}
\end{dfn}

Now consider a sublattice $\Lambda$ of $A$ and a compact element $m\in A$. Set
\[\mathcal{X} (m)=\{x\in \Lambda\mid m\not\leq x\}.\]
Observe that this subset depends on $\Lambda$, and if $m=0$ then $\mathcal{X}(m)=\emptyset.$

\begin{prop}\label{compact elem 1}
Let $A$ be an idiomatic-quantale, $\Lambda$ a sublattice of $A$ and $m\in A$ a compact element. Then,
\begin{enumerate}[\rm(1)]
\item The family $\mathcal{X}(m)$ is closed under directed unions.
\item Each member of $\mathcal{X}(m)$ is less than or equal to a maximal member of $\mathcal{X}(m).$
\end{enumerate}
\end{prop}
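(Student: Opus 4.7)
The plan is to use the compactness of $m$ for part (1), and then bootstrap part (2) via Zorn's lemma, using (1) to supply upper bounds for chains.

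For part (1), I will take a directed subfamily $D\subseteq \mathcal{X}(m)$ and consider its join $s=\bigvee D$, which sits in $\Lambda$ because the sublattices considered here are closed under directed joins (this is the implicit standing hypothesis — $\Lambda(M)$ and $\Lambda^{fi}(M)$ both enjoy this). The argument is then a straightforward contrapositive: if $m\leq s=\bigvee D$, the compactness assumption from Definition \ref{compac} forces $m\leq x$ for some $x\in D$, contradicting $x\in \mathcal{X}(m)$. Therefore $m\not\leq s$, so $s\in\mathcal{X}(m)$.

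For part (2), fix $x_{0}\in\mathcal{X}(m)$ and consider the partially ordered set
\[\mathcal{X}(m)_{\geq x_{0}}=\{y\in\mathcal{X}(m)\mid x_{0}\leq y\},\]
which is nonempty since $x_{0}$ itself belongs to it. Given any chain $C\subseteq \mathcal{X}(m)_{\geq x_{0}}$, the chain is in particular a directed family, so by part (1) the join $\bigvee C$ lies in $\mathcal{X}(m)$; since $x_{0}\leq y$ for every $y\in C$, we also have $x_{0}\leq \bigvee C$, so $\bigvee C\in \mathcal{X}(m)_{\geq x_{0}}$ and furnishes an upper bound of $C$ inside this poset. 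Zorn's lemma then produces a maximal element of $\mathcal{X}(m)_{\geq x_{0}}$, which is also maximal in $\mathcal{X}(m)$, proving the claim.

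I expect no serious obstacle: the whole argument rests on compactness of $m$ (for (1)) and a routine Zorn application (for (2)), with the only subtlety being the tacit assumption that the sublattice $\Lambda$ is stable under directed joins in $A$, so that the join $\bigvee D$ of a directed family in $\mathcal{X}(m)$ still lies in $\Lambda$ and hence is eligible for membership in $\mathcal{X}(m)$. Once that point is acknowledged, both parts are immediate.
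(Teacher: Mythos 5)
Your proof is correct and follows essentially the same route as the paper: compactness of $m$ applied to a directed family for part (1), and a Zorn's lemma argument bootstrapped from (1) for part (2), which is exactly what the paper does (it simply states (2) as "an application of (1) and Zorn's Lemma"). The only cosmetic difference is that you invoke the directed-subset form of compactness directly, whereas the paper passes through a finite subfamily $F\subseteq D$ and then uses directedness to find a single $d$ with $\bigvee F\leq d$; your explicit remark that $\Lambda$ must be closed under directed joins for $\bigvee D$ to be eligible for membership in $\mathcal{X}(m)$ is a fair reading of a hypothesis the paper leaves tacit.
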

\begin{proof}
$\rm(1)$ Let $D$ be a directed subset of $\mathcal{X}(m)$. Suppose that $m\leq \bigvee D$. Since $m$ is a compact element, we have $m\leq \bigvee F$ for some finite family $F\subseteq D$. Thus there exists $d\in D$ such that $m\leq \bigvee F\leq d$. This gives $d\notin\mathcal{X}(m)$ which is a contradiction.

$\rm (2)$ This is an application of (1) and Zorn's Lemma.
\end{proof}

We know that if $A$ is an idiom then $A$ is a frame provided it is a distributive lattice (Proposition \ref{disid}), there exists many non-distributive idioms, but we can subtract the distributive part of a idiom:

\begin{dfn}\label{disd}
Let $A$ be an idiom. An element $a\in A$ is \emph{distributive} if it satisfies the following equivalent conditions:
\begin{enumerate}[\rm(1)]
\item $(\forall x,y\in A[a\vee(x\wedge y)=(a\vee x)\wedge(a\vee y)])$.

\item $(\forall x,y\in A[a\wedge(x\vee y)=(a\wedge x)\vee(a\wedge y)])$.
\end{enumerate}

\end{dfn}

Denote by $F(A)$ the set of all distributive elements of $A$, it is easy to see that $F(A)$ is a sub-idiom of $A$ which is distributive and thus it is a frame.

\begin{prop}\label{pointid}
Let $A$ be an idiomatic-quantale, $\Lambda$ a sub-idiom of $F(A)$ and $m\in A$ a compact element. Then each maximal member of $\mathcal{X}(m)$ is a point of $\Lambda.$
\end{prop}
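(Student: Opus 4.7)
The plan is to show that a maximal $p\in\mathcal{X}(m)$ satisfies the two requirements of a point of $\Lambda$: it is strictly below the top and it is meet-irreducible. Since $\Lambda$ is a sub-idiom of $F(A)$, every element of $\Lambda$ is distributive in $A$; moreover $\Lambda$ itself is distributive, hence by Proposition \ref{disid} a frame, so the notion of point of $\Lambda$ from Definition \ref{point} is available.

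First I would dispose of the easy condition $p\neq 1$: since $m\not\leq p$ and $m\leq 1$, we cannot have $p=1$. For the meet-irreducibility, I would argue by contradiction. Assume $a,b\in\Lambda$ with $a\wedge b\leq p$ but $a\not\leq p$ and $b\not\leq p$. Then $p< p\vee a$ and $p< p\vee b$ in $\Lambda$. The maximality of $p$ in $\mathcal{X}(m)$ forces $p\vee a\notin\mathcal{X}(m)$ and $p\vee b\notin\mathcal{X}(m)$, i.e.\ $m\leq p\vee a$ and $m\leq p\vee b$.

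The crux is now to combine these two inequalities. Because $p$ is a distributive element of $A$ (it lies in $F(A)$), Definition \ref{disd} gives
\[(p\vee a)\wedge(p\vee b)=p\vee(a\wedge b)=p,\]
the second equality because $a\wedge b\leq p$. Hence $m\leq(p\vee a)\wedge(p\vee b)=p$, contradicting $p\in\mathcal{X}(m)$. This contradiction yields $a\leq p$ or $b\leq p$, so $p$ is $\wedge$-irreducible in $\Lambda$.

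The only genuinely subtle point is making sure the distributive law in the displayed line is applied to elements that all sit inside the ambient idiomatic-quantale $A$, where the law for distributive elements is guaranteed by the hypothesis $\Lambda\subseteq F(A)$; the argument does not require $a\wedge b$ or $a,b$ themselves to be distributive, only $p$. Everything else is just the standard maximality/Zorn-style bookkeeping already packaged in Proposition \ref{compact elem 1}.
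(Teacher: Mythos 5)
Your proof is correct and follows essentially the same route as the paper: both establish $p\neq 1$ from $m\not\leq p$, use maximality of $p$ in $\mathcal{X}(m)$ to get $m\leq p\vee a$ and $m\leq p\vee b$, and then apply the distributivity identity $(p\vee a)\wedge(p\vee b)=p\vee(a\wedge b)$ to force $m\leq p$ (the paper phrases this as a direct contrapositive rather than a contradiction, which is only a cosmetic difference). Your observation that only $p$ needs to be a distributive element of $A$ is a slightly sharper reading of the hypothesis, but it does not change the substance of the argument.
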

\begin{proof}
Let $p$ be a maximal element of $\mathcal{X}(m)$. Since $m\not\leq p$ then $p\neq 1.$ Now, suppose $a,b\in \Lambda$ are elements such that $a\not\leq p$ and $b\not\leq p$. We shall show that $a\wedge b\not\leq p$. We have that $p<a\vee p$ and $p<b\vee p$ since $a\not\leq p$ and $b\not\leq p$. Thus $a\vee p\notin \mathcal{X}(m)$ and $b\vee p \notin \mathcal{X}(m)$ by the maximality of $p$. This gives 
\[m\leq (a\vee p)\wedge (b\vee p)=(a\wedge b)\vee p \]
where the right hand holds since $\Lambda$ is distributive. Finally, since $m\not\leq p$, then $a\wedge b\not\leq p$, as required.
\end{proof}

Let $\Lambda$ as above. Denote by $\pt(\Lambda)$ the set of points of $\Lambda$ then it is not empty. Let $S\subseteq \pt(\Lambda)$. For each $a\in A$ we define the following subset of $S$,
\[p\in d(a)\Leftrightarrow a\nleq p.\] 
Therefore we can topologize $S$ with the family $\mathcal{O}(S)=\{d(a)\mid a\in A\}$, thus we have an idiom epimorphism \[d\colon\Lambda\rightarrow\mathcal{O}(S)\] with this we can prove the following which has an important consequence and illustrate the spatial behaviour of distributive lattices of compactly generated idioms. 

\begin{thm}\label{spatiid}
Let $A$ be an idiomatic-quantale compactly generated and $\Lambda$ a sub-idiom of $F(A)$. Then the associated indexing morphism 
\[d\colon\Lambda\rightarrow \mathcal{O}\pt(\Lambda)\]
is injective. Hence $\Lambda$ is spatial.
\end{thm}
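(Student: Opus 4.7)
The plan is to establish injectivity by showing that $d$ reflects the order: if $a\not\leq b$ in $\Lambda$, then I will produce a point $p\in\pt(\Lambda)$ witnessing $b\leq p$ while $a\not\leq p$, so that $p\in d(a)\setminus d(b)$ and hence $d(a)\neq d(b)$. Since $d$ is monotone, reflection of the order is equivalent to injectivity, and injectivity of the idiom epimorphism $d\colon\Lambda\to\mathcal{O}\pt(\Lambda)$ is the definition of spatiality in this context.

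First I would use compact generation of $A$. Suppose $a,b\in\Lambda$ with $a\not\leq b$. Writing $a=\bigvee C$ for a set $C$ of compact elements of $A$, the inequality $a\not\leq b$ forces some $m\in C$ with $m\leq a$ and $m\not\leq b$. Then $b$ belongs to the set
\[\mathcal{X}(m)=\{x\in\Lambda\mid m\not\leq x\}\]
formed inside $\Lambda$, and by Proposition \ref{compact elem 1}(2) there is a maximal element $p$ of $\mathcal{X}(m)$ with $b\leq p$.

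Next I would invoke Proposition \ref{pointid}: because $\Lambda$ is a sub-idiom of $F(A)$ and $m$ is compact, every maximal member of $\mathcal{X}(m)$ is a point of $\Lambda$, so $p\in\pt(\Lambda)$. Finally, $m\leq a$ and $m\not\leq p$ force $a\not\leq p$, so $p\in d(a)$, while $b\leq p$ gives $p\notin d(b)$. This is the required separating point, and the proof concludes.

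The main thing to keep straight is the bookkeeping between the ambient quantale $A$ and the sub-idiom $\Lambda$: the compact witness $m$ must be chosen inside $A$ (which is where compact generation lives), whereas $\mathcal{X}(m)$ is taken inside $\Lambda$ so that its maximal elements are elements of $\Lambda$ eligible to be points; Propositions \ref{compact elem 1} and \ref{pointid} are stated in exactly this asymmetric form, so they apply directly without adjustment. Once those two facts are in hand, the argument is the standard Zorn-style separation proof familiar from the spatiality of topological frames.
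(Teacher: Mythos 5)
Your proposal is correct and follows essentially the same route as the paper's own proof: pick a compact $m\leq a$ with $m\not\leq b$ from a compact generating set, extend $b$ to a maximal element $p$ of $\mathcal{X}(m)$ via Proposition \ref{compact elem 1}, recognize $p$ as a point by Proposition \ref{pointid}, and conclude $p\in d(a)\setminus d(b)$. Your remark about keeping $m$ in $A$ while forming $\mathcal{X}(m)$ inside $\Lambda$ is exactly the bookkeeping the paper relies on.
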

\begin{proof}
We show that $d(a)\subseteq d(b)$ implies $a\leq b$ for $a,b\in \Lambda$. Suppose $a\not\leq b$ for some $a,b\in \Lambda$. Since $A$ is compactly generated there exists $m\in A$ such that $m\leq a$ and $m\not\leq b$. Then $b\in\mathcal{X}(m)$ and hence, by Proposition \ref{compact elem 1} we have $b\leq p$ fore some maximal element $p$ in $\mathcal{X}(m)$. Note that $p\notin d(b)$ and $a\not\leq p$ since $m\not\leq p.$ Thus $p\notin d(b)$ and $p\in d(a)$. This gives $d(a)\not\subseteq d(b).$
\end{proof}

\begin{obs}
In \cite[Section 2]{simmonssome} a more general situation is given.
\end{obs}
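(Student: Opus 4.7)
The final statement is a bibliographic remark of the form ``see elsewhere for more,'' not a theorem with content to be established; accordingly there is nothing to prove in the usual mathematical sense. My ``proposal'' therefore consists of describing what one would need to do in order to corroborate the remark, namely to inspect \cite[Section 2]{simmonssome} and match its setup against the one used here to prove Theorem \ref{spatiid}.

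The plan is as follows. First I would identify the structural hypotheses used in the proof of Theorem \ref{spatiid}: that $A$ is a compactly generated idiomatic-quantale, that $\Lambda$ sits inside the distributive part $F(A)$, and that the sets $\mathcal{X}(m)$ for $m$ compact are inductive, so that Zorn's lemma produces points of $\Lambda$ lying above any prescribed element. Each of these pieces of data plays a specific role: compactness drives Proposition \ref{compact elem 1}, distributivity drives Proposition \ref{pointid}, and the two combine to supply enough points of $\Lambda$ to separate any pair of elements with $a\not\leq b$.

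Second I would compare these ingredients with the framework of \cite[Section 2]{simmonssome}. One expects Simmons's version to axiomatise the separating device more abstractly --- for example by replacing ``compact element below $a$ but not below $b$'' with a family of test elements satisfying an abstract prime-avoidance principle --- thereby covering settings where neither an ambient idiomatic-quantale nor a literal notion of compact element is available, but where the same Zorn-plus-distributivity argument still yields injectivity of the indexing morphism into a topology on a point set.

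The only real obstacle is a bibliographic one: without Simmons's paper in hand the remark cannot be independently verified. The best one can do is reconstruct the intended abstraction by cataloguing precisely which hypotheses of Theorem \ref{spatiid} are used and where, and then recognise the resulting \emph{skeletal} argument as a template of which the theorem proved here is an instance; matching this skeleton to the development in \emph{op.\ cit.}\ would constitute the verification the remark asks for.
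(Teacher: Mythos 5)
You are right that this remark is purely bibliographic and carries no mathematical content to be proved; the paper itself offers no proof, and your reading of which hypotheses of Theorem \ref{spatiid} the cited reference abstracts is a reasonable gloss. Nothing further is required.
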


\vspace{1em}

\subsection{Module theoretic preliminaries}\label{subsecmod}


As it was mentioned in the Introduction, we want to translate some notions of rings to the module context. In order to do this, we will work in the category $\sigma[M]$ where $M$ is an $R$-module. The category $\sm$ is the full subcategory of $R\Mod$ consisting of all modules that can be embedded in a $M$-generated module. This category is more general than $R\Mod$ in the sense that if $M=R$ then $\sm=R\Mod$. It can be seen that $\sm$ is a category of Grothendieck \cite{wisbauerfoundations}. Many ring-theoretic aspects have been translated to modules in this category, see for example \cite{PepeGab}, \cite{PepeFbn}, \cite{PepeKrull}, \cite{maugoldie}, \cite{wisbauerfoundations}, \cite{wisbauermodules}, etc.

In \cite{BicanPr} is defined a product of modules as follows:

\begin{dfn}\label{pro}
Let $M$ and $K$ be $R$-modules. Let $N\leq M$. The product of $N$ with $K$ is defined as:
\[N_MK=\sum\{f(N)\mid f\in \Hom(M,K)\}\]
\end{dfn}

This product generalizes the usual product of an ideal and an $R$-module. For properties of this product see \cite[Proposition 1.3]{PepeGab}. In particular we have a product of submodules of a given module. 

Given a submodule $N$ of a module $M$, we will denote the least fully invariant submodule of $M$ containing $N$ by  $\overline{N}$. This submodule can be described as 
\[\overline{N}=N_M M.\]

Since we have a product, it is natural to ask for an annihilator. Next definition was given in \cite{beachy2002m}:

\begin{dfn}
Let $M$ and $K$ be $R$-modules. The \emph{annihilator} of $K$ in $M$ is defined as:

\centerline{$\annm(K)=\bigcap\{Ker(f)\mid f\in\Hom(M,K)\}$}
\end{dfn}

This annihilator is a fully invariant submodule of $M$ and it is the greatest submodule of $M$ such that $\annm(K)_MK=0$. 


Now we present two lemmas that will be needed in what follows.

\begin{lem}\label{anninter}
Let $M$ be projective in $\sm$ and $\{N_i\mid i\in I\}$ a family of modules in $\sm$. Then $\bigcap\annm(N_i)=\annm(\sum N_i)$.
\end{lem}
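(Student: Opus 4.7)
The plan is to prove the two inclusions separately, with the nontrivial direction requiring projectivity of $M$ in $\sigma[M]$.

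For the inclusion $\annm(\sum N_i) \subseteq \bigcap_i \annm(N_i)$, I would fix $i \in I$ and observe that each $f \in \Hom(M, N_i)$ composes with the canonical inclusion $\iota_i\colon N_i \hookrightarrow \sum_j N_j$ to give a morphism $\iota_i f\colon M \to \sum_j N_j$ with $\ker(\iota_i f) = \ker(f)$. Hence
\[
\annm\Bigl(\sum_j N_j\Bigr) = \bigcap\bigl\{\ker(g)\bigm| g\in \Hom(M,\textstyle\sum_j N_j)\bigr\} \;\subseteq\; \bigcap\bigl\{\ker(f)\bigm| f\in \Hom(M,N_i)\bigr\} = \annm(N_i),
\]
and intersecting over $i$ gives the claim. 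This direction does not use projectivity.

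For the reverse inclusion, I would exploit the canonical epimorphism $\pi\colon \bigoplus_i N_i \twoheadrightarrow \sum_i N_i$ in $\sigma[M]$. Given any $g \in \Hom(M, \sum_i N_i)$, projectivity of $M$ in $\sigma[M]$ lifts $g$ to some $\tilde g\colon M \to \bigoplus_i N_i$ with $\pi\tilde g = g$. Write $\tilde g = (g_i)_{i\in I}$ with $g_i \in \Hom(M, N_i)$. Then for $x \in \bigcap_i \annm(N_i)$, since $x \in \annm(N_i) \subseteq \ker(g_i)$ for every $i$, we get $\tilde g(x) = 0$, hence $g(x) = \pi\tilde g(x) = 0$. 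Thus $\bigcap_i \annm(N_i) \subseteq \ker(g)$ for every such $g$, and intersecting over all $g \in \Hom(M, \sum_i N_i)$ gives $\bigcap_i \annm(N_i) \subseteq \annm(\sum_i N_i)$.

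The only nontrivial point is the lifting step, and this is exactly where the projectivity hypothesis is used; without it, the morphism $g$ need not factor through $\bigoplus_i N_i$, and the identification $\ker(\tilde g) = \bigcap_i \ker(g_i)$ (which relies on the coordinate-wise description of morphisms into a direct sum) cannot be transported to $\ker(g)$. I would also briefly note that $\sum_i N_i \in \sigma[M]$ so that $\annm(\sum_i N_i)$ is well-defined and $\pi$ lives in the correct category for the projectivity of $M$ to apply.
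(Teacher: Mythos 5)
Your proof is correct and follows essentially the same route as the paper: the easy inclusion from $N_i\leq\sum_j N_j$, and for the reverse inclusion a lift of each $g\colon M\to\sum_i N_i$ through the canonical epimorphism $\bigoplus_i N_i\twoheadrightarrow\sum_i N_i$ using projectivity of $M$ in $\sigma[M]$, followed by the componentwise vanishing argument. No gaps.
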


\begin{proof}
Denote $N=\sum_I N_i$. Since $N_i\leq N$ then $\annm(N)\leq \bigcap_I \annm(N_i)$. Now, let $f\colon M\to N$ be a non zero morphism and consider the canonical epimorphism $\rho\colon\bigoplus_I N_i\to N$. So, since $M$ is projective in $\sm$ there exists $g\colon M\to \bigoplus_I N_i$ such that $\rho g=f$.\vspace{-8pt}
\[\xymatrix{ & M\ar[d]^f \ar@{--{>}}[dl]_g \\ \bigoplus_I N_i\ar[r]_\rho & N}\]
Let $x\in\bigcap_I \annm(N_i)$, then $\pi_i(g(x))=0$ for all $i\in I$, where $\pi_i\colon\bigoplus_I N_i\to N_i$ are the canonical projections, hence $g(x)=0$. Thus $f(x)=\rho(g(x))=0$. This implies $\bigcap_I\annm(N_i)\leq \annm(N)$.
\end{proof}

\begin{lem}\label{rayita}
Let $M$ be projective in $\sm$. If $N$ is a submodule of $M$, then $\annm(\overline{N})=\annm(N)$.
\end{lem}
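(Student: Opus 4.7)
The plan is to bootstrap this from Lemma~\ref{anninter} together with the fact that $\overline{N}=N_MM=\sum\{f(N)\mid f\in\End_R(M)\}$, using projectivity only at the step where one needs to lift a morphism into $f(N)$ through $N$.

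First I would dispose of the easy inclusion $\annm(\overline{N})\leq\annm(N)$. This is the purely formal direction: since $N\leq \overline{N}$, any $h\in\Hom(M,N)$ composed with the inclusion $N\hookrightarrow \overline{N}$ gives an element of $\Hom(M,\overline{N})$, so $\bigcap_{g\in\Hom(M,\overline{N})}\ker g \leq \bigcap_{h\in\Hom(M,N)}\ker h$; that is, $\annm(\overline{N})\leq\annm(N)$. No projectivity is needed here.

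For the reverse inclusion $\annm(N)\leq\annm(\overline{N})$, rewrite $\overline{N}=\sum_{f\in\End_R(M)}f(N)$. Since $M$ is projective in $\sm$ and each $f(N)\in\sm$, Lemma~\ref{anninter} applies and yields
\[\annm(\overline{N})=\bigcap_{f\in\End_R(M)}\annm(f(N)).\]
So it suffices to show that for every $f\in\End_R(M)$ one has $\annm(N)\leq\annm(f(N))$. Fix such an $f$ and any $h\in\Hom(M,f(N))$. The restriction $\tilde f:=f|_N\colon N\twoheadrightarrow f(N)$ is an epimorphism in $\sm$, and since $M$ is projective in $\sm$ we can lift $h$ through $\tilde f$: there exists $h'\colon M\to N$ with $\tilde f\circ h'=h$. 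If $x\in\annm(N)$ then $h'(x)=0$, hence $h(x)=\tilde f(h'(x))=0$. Varying $h$, we obtain $x\in\annm(f(N))$.

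The only real content here is choosing to decompose $\overline N$ as the sum $\sum_f f(N)$ and invoking Lemma~\ref{anninter} so that projectivity handles the set-up; the lifting $h'$ of $h$ through $f|_N$ is the single place where the hypothesis that $M$ is projective in $\sm$ is used twice (once inside Lemma~\ref{anninter}, once in the lifting argument). I do not anticipate a genuine obstacle, but the point to be careful about is that $N$ and $f(N)$ genuinely lie in $\sm$ (they are submodules of $M$), so the projectivity of $M$ is usable for morphisms into them.
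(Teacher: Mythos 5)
Your proof is correct, but it takes a genuinely different route from the paper's. The paper disposes of the nontrivial inclusion in one line: since $\annm(K)$ is the largest submodule $L$ of $M$ with $L_MK=0$, it suffices to compute
\[\annm(N)_M\overline{N}=\annm(N)_M N_M M=(\annm(N)_M N)_M M=0,\]
where the associativity of the product (which holds because $M$ is projective in $\sm$) does all the work. You instead unwind $\overline{N}=\sum_{f\in\End_R(M)}f(N)$, invoke Lemma \ref{anninter} to reduce to showing $\annm(N)\leq\annm(f(N))$ for each endomorphism $f$, and prove that by lifting each $h\colon M\to f(N)$ through the surjection $f|_N\colon N\to f(N)$; this is valid since $N$ and $f(N)$ are submodules of $M$ and hence objects of $\sm$, and surjections are the epimorphisms there. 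Both arguments use projectivity essentially; the paper's is shorter because it leans on the quantale formalism already in place (associativity of the product and the characterization of $\annm(K)$ as the greatest submodule annihilating $K$), while yours is more explicit about the morphism-level mechanism and needs only Lemma \ref{anninter} plus the bare lifting property. Your treatment of the easy inclusion $\annm(\overline{N})\leq\annm(N)$ is the same formal observation the paper makes without comment.
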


\begin{proof}
Let $N$ be a submodule of $M$. We always have $\annm(\overline{N})\leq\annm(N).$ On the other hand, using the associativity of the product in $M$, we have 
\[\annm(N)_M \overline{N}= \annm(N)_M N_M M=0.\]
\end{proof}

Other concepts related to the product that come up are primeness and semiprimeness.

\begin{dfn}[Definition 13 \cite{raggiprime}]
Let $N\leq M$ be a proper fully invariant submodule. $N$ is \emph{prime} in $M$ if whenever $L_MK\leq N$ with $L,K\leq_{fi} M$ then $K\leq N$ or $L\leq N$. We say that $M$ is a prime module if $0$ is prime in $M$.
\end{dfn}

\begin{dfn}[\cite{raggisemiprime}]
Let $N\leq M$ be a proper fully invariant submodule. $N$ is \emph{semiprime} in $M$ if whenever $L_ML\leq N$ with $L\leq_{fi} M$ then $L\leq N$. We say that $M$ is a semiprime module if $0$ is semiprime in $M$.
\end{dfn}

The product of submodules is neither associative nor  distributes sums from the right, in general. If we assume that $M$ is projective in $\sigma[M]$ then the product is associative and distributive over sums \cite[Proposition 5.6]{beachy2002m} and \cite[Lemma 1.1]{maustructure}. Moreover if $N,L\leq_{fi}M$ then $N_ML\leq_{fi}M$. See \cite[Remark 4.2]{mauquantale}.

Now, we give some properties of prime and semiprime submodules:

\begin{prop}\label{1.9}
Let $M$ be projective in $\sm$ and $P$ a fully invariant submodule of $M$. The following conditions are equivalent:
\begin{enumerate}[\rm(1)]		
	\item $P$ is prime in $M$.
	\item For any submodules $K$, $L$ of $M$ such that $K_ML\leq{P}$, then $K\leq P$ or $L\leq P$.
	\item For any submodules $K$, $L$ of $M$ containing $P$ and such that $K_ML\leq{P}$, then $K=P$ or $L=P$.
	\item $M/P$ is a prime module.
\end{enumerate}
\end{prop}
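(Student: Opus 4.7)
The plan is to establish the cycle $(2) \Rightarrow (1) \Rightarrow (2)$ together with $(2) \Leftrightarrow (3)$, and to prove $(1) \Leftrightarrow (4)$ by passing to the quotient. The implications $(2) \Rightarrow (1)$ and $(2) \Rightarrow (3)$ are immediate restrictions of hypothesis, so only three substantive directions remain. For $(1) \Rightarrow (2)$, given $K, L \leq M$ with $K_M L \leq P$, I would pass to the fully invariant closures $\overline{K} = K_M M$ and $\overline{L} = L_M M$. Using associativity of the product (available because $M$ is projective in $\sigma[M]$) together with the observation $M_M L \leq L$ (since each $f \in \Hom(M, L)$ satisfies $f(M) \subseteq L$), one gets
\[
\overline{K}_M L \;=\; (K_M M)_M L \;=\; K_M(M_M L) \;\leq\; K_M L,
\]
and the reverse inequality from $K \leq \overline{K}$ forces equality; a second associativity step then yields $\overline{K}_M \overline{L} = (K_M L)_M M = \overline{K_M L} \leq \overline{P} = P$. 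Hypothesis $(1)$ applied to $\overline{K}, \overline{L}$ gives $\overline{K} \leq P$ or $\overline{L} \leq P$, hence $K \leq P$ or $L \leq P$.

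For $(3) \Rightarrow (2)$, given $K_M L \leq P$, expand by distributivity of the product over sums,
\[
(K+P)_M(L+P) \;=\; K_M L \,+\, K_M P \,+\, P_M L \,+\, P_M P.
\]
Each summand lies in $P$: the first by hypothesis; $K_M P \leq P$ because every element of $\Hom(M, P)$ has image in $P$; and $P_M L, P_M P \leq P$ because $P$ is fully invariant, so any morphism $M \to L \hookrightarrow M$ or $M \to P$ sends $P$ into itself. Then $(3)$ applied to $K + P \supseteq P$ and $L + P \supseteq P$ gives $K \leq P$ or $L \leq P$.

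For $(1) \Leftrightarrow (4)$ I would use the bijection between fully invariant submodules of $M/P$ and fully invariant submodules of $M$ containing $P$, combined with the product-quotient formula $(K/P)_{M/P}(L/P) = (K_M L + P)/P$ for $P \leq K, L \leq M$, obtained by lifting each $f \colon M/P \to L/P$ via projectivity to some $g \colon M \to L$ with $\pi_L g = f \pi$ and computing $f(K/P) = (g(K) + P)/P$. Under this dictionary, $(4)$ says exactly that whenever $K, L \supseteq P$ are fully invariant and satisfy $K_M L \leq P$, then $K = P$ or $L = P$, which is the restriction of $(1)$ to submodules containing $P$ and is equivalent to $(1)$ via the same bar trick. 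The main obstacle is that both the identity $\overline{K}_M \overline{L} = \overline{K_M L}$ and the product-quotient formula rest on associativity and distributivity of the submodule product, neither of which holds in general; they are available here only thanks to the standing assumption that $M$ is projective in $\sigma[M]$, and careful tracking of the full invariance of $P$ at each use is the principal technical burden.
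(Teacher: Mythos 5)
Your proof is correct, but the comparison with the paper is a bit unusual: the paper does not prove this proposition at all, it simply cites \cite[Proposition 1.11]{PepeGab}, \cite[Proposition 5.5]{beachy2002m} and \cite[Proposition 18]{raggiprime}. What you have done is reconstruct, self-contained, essentially the standard argument from those sources: the fully invariant closure $\overline{K}=K_MM$ reduces $(1)$ to $(2)$, the replacement $K\mapsto K+P$, $L\mapsto L+P$ reduces $(3)$ to $(2)$, and the lattice-and-product correspondence between $\Lambda^{fi}(M/P)$ and the fully invariant submodules of $M$ above $P$ handles $(4)$; every step correctly leans on associativity and right-distributivity of $-_M-$, which the paper records as consequences of projectivity of $M$ in $\sm$, and on full invariance of $P$ to get $P_ML\leq P$ and $\overline{P}=P$. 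The computations $\overline{K}_M\overline{L}=\overline{K_ML}$ and $(K/P)_{M/P}(L/P)=(K_ML+P)/P$ check out, including the use of projectivity to lift maps $M/P\to L/P$ through $L\to L/P$. One small mislabel: to pass from the statement ``$(1)$ restricted to fully invariant $K,L\supseteq P$'' (which is what $(4)$ translates to) back to the full $(1)$, the device you need is the $K+P$, $L+P$ substitution from your $(3)\Rightarrow(2)$ paragraph, not the ``bar trick'' of $(1)\Rightarrow(2)$ that you name there; the required computation is already in your text, so this is a labelling slip rather than a gap. The net trade-off: the paper buys brevity by outsourcing, while your version makes visible exactly where projectivity and full invariance of $P$ enter, which is useful since the same mechanisms reappear in Propositions \ref{primprime} and \ref{ann}.
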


\begin{proof}
It follows from \cite[Proposition 1.11]{PepeGab}, \cite[Proposition 5.5]{beachy2002m} and \cite[Proposition 18]{raggiprime}.
\end{proof}

\begin{prop}\label{1.14}
Let $M$ be projective in $\sigma[M]$ and $N$ a fully invariant submodule of $M$. The following conditions are equivalent:
\begin{enumerate}[\rm(1)]
	\item $N$ is semiprime in $M$.
	\item For any submodule $K$ of $M$, $K_MK\leq{N}$ implies $K\leq{N}$.
	\item For any submodule $K\leq{M}$ containing $N$ such that $K_MK\leq{N}$, then $K=N$.
	\item $M/N$ is a semiprime module.
	\item If $m\in{M}$ is such that ${Rm}_M{Rm}\leq{N}$, then $m\in{N}$.
	\item $N$ is an intersection of prime submodules.
\end{enumerate}
\end{prop}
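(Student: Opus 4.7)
The plan is to establish a web of implications centred on (1), using three facts from the projectivity of $M$ in $\sigma[M]$: the internal product is associative and distributive over sums on both sides, $\overline{K} = K_M M$ for every $K \leq M$, and $N_M M = N$ whenever $N \leq_{fi} M$.

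The equivalences (1) $\Leftrightarrow$ (4) and (2) $\Leftrightarrow$ (3) are formal. For the first, the lattice isomorphism between fully invariant submodules of $M/N$ and those of $M$ above $N$, together with the identity $(L/N)_{M/N}(L/N) = (L_M L + N)/N$, translates (1) into (4). For the second, (3) is a restriction of (2), and the converse follows by passing from $K$ to $K + N$ and noting that the cross terms $K_M N$ and $N_M K$ both lie in $N$: the former because any $f\colon M \to N$ maps into $N$, the latter because any $f\colon M \to K \hookrightarrow M$ is an endomorphism of $M$ and $N$ is fully invariant.

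The core calculation is (1) $\Rightarrow$ (2). Given $K_M K \leq N$, the plan is to replace $K$ by its fully invariant hull $\overline{K} = K_M M$, use associativity to rewrite $\overline{K}_M\overline{K} = ((K_M M)_M K)_M M$, and bound $(K_M M)_M K \leq K_M K$ via the identity $f(K_M M) = \sum_{g \in \End(M)} (fg)(K)$ for $f \in \Hom(M,K)$. Since $(K_M K)_M M \leq N_M M = N$, hypothesis (1) applied to $\overline{K}$ gives $\overline{K} \leq N$, hence $K \leq N$; the reverse (2) $\Rightarrow$ (1) is trivial. Then (2) $\Leftrightarrow$ (5) is straightforward: (2) $\Rightarrow$ (5) takes $K = Rm$, and conversely, given $K_M K \leq N$, each $m \in K$ satisfies $(Rm)_M(Rm) \leq K_M K \leq N$, forcing $m \in N$.

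The substantial step is (1) $\Rightarrow$ (6): for every $m \notin N$, produce a prime submodule $P$ with $N \leq P$ and $m \notin P$, so that $N$ is the intersection of primes above it. Using (5), inductively build a sequence $m = m_0, m_1, m_2, \ldots$ outside $N$ with $m_{i+1} \in (Rm_i)_M(Rm_i)$, and apply Zorn's lemma to choose a fully invariant submodule $P \geq N$ maximal among those disjoint from $S = \{m_i\}$. The main obstacle is verifying primeness of $P$: if $L_1, L_2 \leq_{fi} M$ strictly contain $P$ with $L_1{}_M L_2 \leq P$, then maximality forces each $L_i$ to meet $S$, say $m_i \in L_1$ and $m_j \in L_2$ with $i \leq j$; then $m_{j+1} \in (Rm_j)_M(Rm_j) \leq L_1{}_M L_2 \leq P$ contradicts $P \cap S = \emptyset$. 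The reverse (6) $\Rightarrow$ (1) is immediate: $L_M L \leq \bigcap_\alpha P_\alpha$ forces $L \leq P_\alpha$ for each prime $P_\alpha$.
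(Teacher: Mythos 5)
Your proof is correct and supplies the content that the paper merely outsources: the published proof reduces $(1)\Leftrightarrow(2)\Leftrightarrow(3)\Leftrightarrow(4)$ to the analogue of Proposition~\ref{1.9} and cites an external reference for $(1)\Leftrightarrow(5)\Leftrightarrow(6)$, and your arguments --- the passage to $\overline{K}=K_MM$ with the bound $(K_MM)_MK\leq K_MK$, the cross-term computation for $K+N$, and the $m$-sequence plus Zorn's lemma for $(1)\Rightarrow(6)$ --- are exactly the standard ones behind those citations. The only step you leave implicit is that $(Rm_i)_M(Rm_i)\leq Rm_i$ (every $f\colon M\to Rm_i$ has image in $Rm_i$), hence $Rm_j\leq Rm_i$ for $i\leq j$; this is what entitles you to put $m_j$ into $L_1$ as well as $L_2$ before forming $(Rm_j)_M(Rm_j)\leq {L_1}_ML_2$, and it is worth one explicit line.
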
 

\begin{proof}
$(1)\Leftrightarrow (2)\Leftrightarrow (3)\Leftrightarrow (4)$ It is analogous to Proposition \ref{1.9}.

$(1)\Leftrightarrow 5\Leftrightarrow (6)$ See \cite[Proposition 1.11]{maugoldie}.
\end{proof}

\section{The frame of semiprimitive submodules.}\label{sec3}

In this section, we  define primitive submodules and semiprimitive submodules. We wil prove that the set of all semiprimitive submodules and M is a spatial frame, furthermore , we will  see that this frame is  isomorphic to the frame of open set of the topological space $\mx(M).$  Since we have this frame, we also ask us how is given its point space, and then we give sufficient conditions for its point space to be precisely the primitive submodules.

\begin{dfn}
Let $M$ be an $R$-module and $N<M$. It is said that $N$ is a \emph{primitive submodule} if $N=\annm(S)$ for some $S\in\sm$ a simple module. We say that $M$ is a \emph{primitive module} if $0$ is a primitive submodule. \end{dfn}
Denote $\prt(M)=\{P< M\mid P\text{ is primitive }\}.$

\begin{obs}
If $0\neq M$ then there are simple modules in $\sigma[M]$. Consider $Rm\leq M$ with $0\neq m$, since $Rm$ is cyclic then it has maximal submodules, say $\mathcal{M}\leq Rm$, then $Rm/\mathcal{M}\in \sigma[M]$ is simple.
\end{obs}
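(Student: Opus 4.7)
The plan is to extract a simple module from any nonzero $m \in M$ by the standard cyclic-submodule argument, and then verify membership in $\sigma[M]$ using closure properties of that subcategory.

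First, pick any $0 \neq m \in M$ and form the cyclic submodule $Rm \leq M$. Since $R$ has identity, $m = 1 \cdot m \in Rm$, so $Rm$ is nonzero. I would then exhibit a maximal submodule of $Rm$ via Zorn's lemma applied to the poset $\mathcal{P}$ of proper submodules of $Rm$ ordered by inclusion. The set $\mathcal{P}$ is nonempty because $0 \in \mathcal{P}$. For a chain $\{N_\alpha\}_\alpha$ in $\mathcal{P}$, the union $U = \bigcup_\alpha N_\alpha$ is a submodule; moreover $U$ is proper, because $m \in U$ would force $m \in N_\alpha$ for some $\alpha$, giving $Rm \leq N_\alpha$, contradicting $N_\alpha \in \mathcal{P}$. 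Zorn's lemma then yields a maximal element $\mathcal{M} \in \mathcal{P}$, and by maximality of $\mathcal{M}$ in $Rm$, the quotient $Rm/\mathcal{M}$ is a simple $R$-module.

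Second, I would verify $Rm/\mathcal{M} \in \sigma[M]$. The category $\sigma[M]$ is a hereditary Grothendieck subcategory of $R\Mod$ closed under submodules, quotients, and direct sums (this is the content of the standard treatment in \cite{wisbauerfoundations} that was cited in Section~\ref{subsecmod}). Since $M$ is trivially $M$-generated, $M \in \sigma[M]$; closure under submodules gives $Rm \in \sigma[M]$; and closure under quotients then yields $Rm/\mathcal{M} \in \sigma[M]$, as required.

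There is really no hard step here; the only point requiring minimal care is the Zorn's lemma verification that the chain union remains proper, which is exactly where one uses that $Rm$ is cyclic with generator $m$. Everything else is a direct invocation of the definition of $\sigma[M]$ and its closure properties, so the argument is essentially the two lines already sketched in the remark, formalized as above.
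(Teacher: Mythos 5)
Your argument is correct and is exactly the expansion of the sketch the remark itself gives: Zorn's lemma on proper submodules of the cyclic module $Rm$ (the chain union stays proper because the generator $m$ cannot lie in it), followed by the standard closure of $\sigma[M]$ under submodules and quotients. No gaps; this matches the paper's intended reasoning.
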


\begin{prop}\label{primprime}
Let $M$ be projective in $\sm$. If $P< M$ is a primitive submodule then $P$ is prime in $M$.
\end{prop}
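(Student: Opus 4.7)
The plan is to show directly that $P = \Ann_M(S)$ satisfies the defining property of a prime submodule by exploiting the simplicity of $S$ together with the associativity of the product $N_M(-)$, which is available because $M$ is projective in $\sigma[M]$. By Proposition \ref{1.9}, it suffices to check the prime condition against arbitrary (not necessarily fully invariant) submodules $L,K\leq M$ with $L_M K\leq P$.

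First I would observe that $P<M$ is fully invariant (annihilators are fully invariant by the definition stated in the preliminaries) and is proper, so the question whether $P$ is prime makes sense. Then, starting from $L_M K\leq P=\Ann_M(S)$, I would apply the defining property of $\Ann_M(S)$ to get $(L_MK)_M S=0$. Using associativity of the product (valid by projectivity of $M$ in $\sigma[M]$, cf.\ \cite[Proposition 5.6]{beachy2002m}), this rewrites as $L_M(K_M S)=0$.

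Now the key point: by the very definition $K_M S=\sum\{f(K)\mid f\in\Hom(M,S)\}$, the submodule $K_M S$ sits inside $S$. Since $S$ is simple, either $K_M S=0$ or $K_M S=S$. In the first case $K\leq\Ann_M(S)=P$. In the second case the equation $L_M(K_M S)=0$ becomes $L_M S=0$, and the maximality property of the annihilator gives $L\leq\Ann_M(S)=P$. Either way the prime condition holds.

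The main conceptual obstacle is really just bookkeeping: one has to be comfortable using the equivalent formulations of primeness from Proposition \ref{1.9} (to avoid insisting that $L,K$ be fully invariant), and one has to invoke the projectivity hypothesis precisely where associativity of $(-)_M(-)$ is needed. There is no hidden difficulty beyond that; the simplicity of $S$ collapses the two alternatives cleanly.
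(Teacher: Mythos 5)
Your proof is correct and follows essentially the same route as the paper's: rewrite $L_MK\leq\annm(S)$ as $L_M(K_MS)=0$ via associativity (from projectivity of $M$ in $\sigma[M]$), then use simplicity of $S$ to split into the cases $K_MS=0$ and $K_MS=S$. The only cosmetic difference is that the paper works directly with fully invariant $N,L$ as in the definition of primeness, whereas you note via Proposition~\ref{1.9} that arbitrary submodules may be used; both are fine.
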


\begin{proof}
Let $N,L$ be fully invariant submodules of $M$ such that $N_ML\leq P$. Since $P$ is a primitive submodule of $M$ there exists $S\in\sm$ simple module such that $P=\annm(S)$, then $(N_ML)_MS=0$. Furthermore, $N_M(L_MS)=0$ because $M$ is projective in $\sm$. Since $L_MS\leq S$, we have that $L_MS=0$ or $L_MS=S$. If $L_MS=0$ then $L\leq P$. On the other hand, if $L_{M}S=S$ then $0=N_{M}(L_{M}S)=N_{M}S$. Thus $N\leq P$.
\end{proof}

\begin{obs}
It is possible that $M$ has no primitive submodules. Although, if $M$ has a maximal submodule $\mathcal{M}$, then $P=\annm(M/\mathcal{M})$ is a primitive submodule.
\end{obs}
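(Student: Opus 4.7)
The remark makes two assertions, which I plan to verify separately.

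For the assertion that $P=\annm(M/\mathcal{M})$ is primitive whenever $\mathcal{M}<M$ is a maximal submodule, my plan is simply to unwind the definitions. Since $\mathcal{M}$ is maximal, $S:=M/\mathcal{M}$ is simple; being a quotient of $M$, it is $M$-generated and hence lies in $\sigma[M]$. Therefore $\annm(S)$ is the annihilator in $M$ of a simple module of $\sigma[M]$, so to see that it is primitive it only remains to confirm $\annm(S)<M$. But the canonical projection $\pi\colon M\to M/\mathcal{M}$ is a nonzero element of $\Hom(M,S)$, so $\annm(S)\leq\ker\pi=\mathcal{M}<M$.

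For the assertion that there exist modules $M$ with no primitive submodules, my plan is to exhibit the Pr\"ufer $p$-group $M=\mathbb{Z}_{p^{\infty}}$ as a $\mathbb{Z}$-module. The argument splits into two steps. First, classify the simple modules in $\sigma[M]$: any $M$-generated module is a quotient of a direct sum of copies of $\mathbb{Z}_{p^{\infty}}$, hence is a $p$-primary torsion abelian group, as is any submodule of such a quotient; therefore the only simple object of $\sigma[M]$ up to isomorphism is $\mathbb{Z}/p\mathbb{Z}$. Second, show that $\annm(\mathbb{Z}/p\mathbb{Z})=M$: since $\mathbb{Z}_{p^{\infty}}$ is divisible, the image under any morphism $\mathbb{Z}_{p^{\infty}}\to\mathbb{Z}/p\mathbb{Z}$ is a divisible subgroup of $\mathbb{Z}/p\mathbb{Z}$, hence is zero; thus $\Hom(M,\mathbb{Z}/p\mathbb{Z})=0$, and the empty intersection in the annihilator formula gives $\annm(\mathbb{Z}/p\mathbb{Z})=M$. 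Since primitive submodules are required to be proper, $M$ has no primitive submodule.

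The main obstacle is purely conceptual in the second part: one must invoke both the convention that the empty intersection of submodules is the whole module $M$ (so that the trivial $\Hom$-set forces $\annm$ up to the top) and the classification of simples in $\sigma[M]$ via divisibility. The first assertion is essentially a direct unfolding of definitions and poses no real difficulty.
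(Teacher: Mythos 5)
Your proof is correct, and since the paper states this remark without any argument or example, your write-up actually supplies more than the text does. The second assertion (that $P=\annm(M/\mathcal{M})$ is primitive) is the evident unwinding of the definitions, exactly as intended: $M/\mathcal{M}$ is a simple object of $\sigma[M]$ and the canonical projection forces $\annm(M/\mathcal{M})\leq\mathcal{M}<M$. For the first assertion your Pr\"ufer group example $M=\mathbb{Z}_{p^{\infty}}$ is a valid witness: the only simple in $\sigma[M]$ up to isomorphism is $\mathbb{Z}/p\mathbb{Z}$, divisibility kills every morphism $M\to\mathbb{Z}/p\mathbb{Z}$, and so the only candidate annihilator is all of $M$, which is excluded by the properness requirement in the definition. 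One cosmetic point: $\Hom(M,\mathbb{Z}/p\mathbb{Z})$ is not empty --- it contains the zero map, whose kernel is $M$ --- so the intersection defining $\annm$ is taken over $\{M\}$ rather than over the empty set; the value is $M$ either way, so nothing in your argument changes.
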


\begin{dfn}
Let $M$ be an $R$-module. It is said that $M$ is \emph{coatomic} if every proper submodule is contained in a maximal submodule. 
\end{dfn}

\begin{ej} The following are examples of coatomic modules:
\begin{enumerate}[(a)]
	\item Finitely generated modules,
	\item semisimple modules,
	\item semiperfect modules,
	\item multiplication modules over a commutative ring,
	\item modules over a left perfect ring,
\end{enumerate}
\end{ej}

We will denote by $\mx(M)$ and $\mx^{fi}(M)$ the maximal elements in the lattices $\Lambda(M)$ and $\Lambda^{fi}(M)$ respectively.

\begin{lem}\label{maxprm}
Let $M$ be coatomic and projective in $\sm$. If $\mathcal{N}\in\mx^{fi}(M)$ then $\mathcal{N}$ is a primitive submodule.
\end{lem}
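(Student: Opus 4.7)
The plan is to exhibit a simple module $S \in \sigma[M]$ such that $\mathcal{N} = \ann_M(S)$, using coatomicity to find a candidate for $S$ and full invariance of $\mathcal{N}$ together with projectivity of $M$ in $\sigma[M]$ to control morphisms into that simple quotient.

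First I would use that $\mathcal{N} \in \mx^{fi}(M)$ is proper and $M$ is coatomic: choose a maximal submodule $\mathcal{M} \leq M$ with $\mathcal{N} \leq \mathcal{M}$, and set $S := M/\mathcal{M}$, which is simple and lies in $\sigma[M]$. Let $\pi\colon M \to S$ be the canonical projection, and define the candidate primitive submodule $P := \ann_M(S)$. Note $P \leq \ker \pi = \mathcal{M} < M$, so $P$ is a proper fully invariant submodule of $M$.

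The heart of the argument is to show $\mathcal{N} \leq P$, i.e.\ $\mathcal{N}_M S = 0$. Given any $f \in \Hom(M,S)$, I would use projectivity of $M$ in $\sigma[M]$ (together with the fact that $\pi$ is an epimorphism in $\sigma[M]$) to lift $f$ to an endomorphism $\tilde f \in \End_R(M)$ with $\pi \tilde f = f$. Then, since $\mathcal{N}$ is fully invariant, $\tilde f(\mathcal{N}) \subseteq \mathcal{N} \leq \mathcal{M}$, and therefore
\[
f(\mathcal{N}) \;=\; \pi(\tilde f(\mathcal{N})) \;\subseteq\; \pi(\mathcal{M}) \;=\; 0.
\]
Summing over all such $f$ gives $\mathcal{N}_M S = 0$, hence $\mathcal{N} \leq \ann_M(S) = P$.

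Finally, since $P$ is a proper fully invariant submodule of $M$ containing $\mathcal{N}$ and $\mathcal{N}$ is maximal in $\Lambda^{fi}(M)$, I conclude $\mathcal{N} = P = \ann_M(S)$, so $\mathcal{N}$ is primitive. The only subtle point is the projective-lifting step (without projectivity of $M$ in $\sigma[M]$, arbitrary maps $M \to S$ need not be pulled back along $\pi$, and full invariance of $\mathcal{N}$ cannot be leveraged), so that is where I would be most careful; the rest is bookkeeping with maximality and the definition of $\ann_M$.
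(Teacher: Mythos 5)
Your proof is correct and follows essentially the same route as the paper: use coatomicity to find a maximal $\mathcal{M}\supseteq\mathcal{N}$, show $\mathcal{N}_M(M/\mathcal{M})=0$ so that $\mathcal{N}\leq \annm(M/\mathcal{M})$, and conclude by maximality of $\mathcal{N}$ in $\Lambda^{fi}(M)$. The only difference is that where the paper cites \cite[Propositions 1.3 and 1.8]{PepeGab} for the vanishing of $\mathcal{N}_M(M/\mathcal{M})$, you prove it directly by lifting each $f\colon M\to M/\mathcal{M}$ along the projection using projectivity in $\sm$ and full invariance of $\mathcal{N}$ --- which is exactly the content of that citation.
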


\begin{proof}
Since $M$ is coatomic, there exists a maximal submodule $\mathcal{M}$ such that $\mathcal{N}\leq \mathcal{M}$. Since $M$ is projective in $\sm$ and $\mathcal{N}$ is fully invariant, then $\mathcal{N}_M(M/\mathcal{M})=0$ by \cite[Proposition 1.3 and Proposition 1.8]{PepeGab}. Thus $\mathcal{N}\leq \annm(M/\mathcal{M}),$ but $\mathcal{N}\in \mx^{fi}(M)$, therefore $\mathcal{N}=\annm(M/\mathcal{M})$.
\end{proof}

\begin{obs}
The converse of the last Lemma is not true in general. For instance, consider the following examples:
\begin{enumerate}[\rm(1)]
\item The  ring $R$ constructed by Bergman and described in \cite[pp. 27]{chattersrings}. It can be seen that $R$ is a primitive ring and has a unique non zero two-sided ideal $U$. Therefore, $0$ is a primitive ideal but it is not a maximal ideal.
\item  Let $K$ be a field and $V$ a $K-$vector space with $dim_K(V)=\aleph_0$ and let $R:=End_K(V).$ Notice that $V$ is an $R-$module, $Ann_R(V)=0$ is not a maximal ideal in $R$ and $I=\{f\in R\mid dim_K(Im(f))\text{ is finite}\}$  is the only maximal ideal of $R.$ Let $f:V\to V$ such that $dim_K(Im(f))=1.$ Thus, $Rf$ is a projective simple $R-$module and $Ann_R(Rf)=0$ is not maximal. See \cite[Ex. 3.15, Ex. 4.8]{Lam}.
\end{enumerate}
\end{obs}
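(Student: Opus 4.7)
The plan is to exhibit a module $M$ that is coatomic and projective in $\sigma[M]$ and that possesses a primitive submodule which is not maximal among fully invariant submodules. In both proposed examples I would take $M=R$, a ring regarded as a left module over itself; then $\sigma[M]=R\Mod$, projectivity is automatic, and coatomicity reduces to the existence of maximal left ideals above every proper left ideal, which holds by Zorn's lemma. The fully invariant submodules of $M=R$ are exactly the two-sided ideals of $R$, and a primitive submodule of $M=R$ is a primitive ideal of $R$ in the classical sense, so my task reduces to producing a ring whose zero ideal is primitive but not maximal among two-sided ideals.

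For Example (1) I would invoke Bergman's construction of a primitive ring $R$ whose only nonzero proper two-sided ideal is $U$, as given in \cite[p.~27]{chattersrings}. Primitivity yields a faithful simple left $R$-module $S$, so $\Ann_R(S)=0$ witnesses that $0$ is primitive; since $0<U<R$, the zero ideal fails to be maximal among two-sided ideals.

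For Example (2), with $R=\End_K(V)$ and $\dim_K V=\aleph_0$, I would argue as follows. First I would show that $V$ is a simple left $R$-module (any nonzero vector generates $V$, by extending it to a basis and applying the endomorphism sending that basis vector to any prescribed target), so that faithfulness gives $\Ann_R(V)=0$ and $0$ is primitive. Next I would pick a rank-one idempotent $e\in R$, namely the projection onto a one-dimensional subspace along a fixed complement, so that $R=Re\oplus R(1-e)$ and hence $Re$ is projective. The assignment $ge\mapsto g(v_0)$, where $v_0$ spans $\operatorname{Im}(e)$, would then be verified to be a well-defined $R$-linear isomorphism $Re\cong V$, making $Re$ simple and yielding $\Ann_R(Re)=\Ann_R(V)=0$. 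Finally, I would cite \cite[Ex.~3.15, 4.8]{Lam} for the classical fact that $I=\{f\in R\mid\dim_K\operatorname{Im}(f)<\infty\}$ is a proper two-sided ideal of $R$ strictly above $0$, confirming that $0$ is not a maximal two-sided ideal.

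The main obstacle is interpretative rather than computational: one must verify carefully that in the case $M=R$ the module-theoretic notions of primitive submodule and fully invariant maximal submodule coincide with the classical ring-theoretic notions of primitive ideal and maximal two-sided ideal, and that the hypotheses of coatomicity and projectivity in $\sigma[M]$ are met. Once this dictionary is in place, any primitive ring possessing a nonzero proper two-sided ideal provides a counterexample, and both Bergman's construction and the endomorphism ring of an infinite-dimensional vector space furnish such rings of rather different flavor.
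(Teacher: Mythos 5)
Your proposal is correct and follows the same route as the paper: both examples are exactly the ones the remark cites (Bergman's primitive ring with a unique nonzero two-sided ideal, and $\End_K(V)$ for $V$ of countably infinite dimension), reduced via the standard dictionary $M=R$, $\sigma[R]=R\Mod$, fully invariant submodules $=$ two-sided ideals, primitive submodules $=$ classical primitive ideals. The only difference is that you spell out the routine verifications (coatomicity, simplicity and faithfulness of $V$, $Re\cong V$ for a rank-one idempotent) that the paper leaves implicit, which is a harmless and welcome elaboration.
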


Now, we are interested in a particular class of submodules defined by the primitive submodules. We will prove that this class can be seen as the fixed points of a suitable operator on $\Lambda^{fi}(M)$.

\begin{dfn}
A submodule $N\leq M$ is called a \emph{semiprimitive submodule} if $N$ is an intersection of primitive submodules.
\end{dfn}

Let $M$ be an $R$-module and $\mx(M)=\{\mathcal{M}<M\mid \mathcal{M}\text{ is maximal }\}$. If $M$ is projective in $\sigma[M]$, then $\mx(M)$ is a topological space with open sets $\{\textit{m}(N)\mid N\leq_{fi}M\}$ where $\textit{m}(N)= \{\mathcal{M}\in \mx(M)\mid N\nleq \mathcal{M}\}$. See \cite{mauquantale}.

If $M$ is coatomic, then $\mx(M)\neq \emptyset$. Hence, we have an adjunction
 \vspace{-10pt}

\[\xymatrix@=15mm{\Lambda^{fi}(M)\ \ \ar@/^/[r]^{\textit{m}} & \mathcal{O}(\mx(M))\ \ \ar@/^/[l]^{\textit{m}_*}}\]

This adjuntion defines a multiplicative nucleus $\tau:=\textit{m}_*\circ\textit{m}$ on $\Lambda^{fi}(M)$ (see \cite[Theorem 3.21]{mauquantale}). 

\begin{obs}
Given $N\leq_{fi}M$, by \cite[Proposition 3.20]{mauquantale} $\tau(N)$ is the largest fully invariant submodule contained in $\bigcap\{\mathcal{M}\in \mx(M)|N\leq\mathcal{M}\}.$ 

Consider $\Lambda^{fi}(M)_{\mu}=\{N\in\Lambda^{fi}(M)\mid\tau(N)=N\}.$


\end{obs}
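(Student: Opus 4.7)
The plan is to unpack the identity $\tau = m_{*}\circ m$ using the explicit formula for the right adjoint furnished by Proposition \ref{hullker}, and then translate the resulting containment condition by contraposition so as to recognize $\tau(N)$ as the largest fully invariant submodule sitting inside $\bigcap\{\mathcal{M}\in\mx(M)\mid N\leq\mathcal{M}\}$. No specifically module-theoretic input beyond the already-established topology on $\mx(M)$ will be needed; the content is purely the hull-kernel adjunction.

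First I would recall that the map $m\colon\Lambda^{fi}(M)\to\mathcal{O}(\mx(M))$ sending $N\mapsto m(N)=\{\mathcal{M}\in\mx(M)\mid N\nleq\mathcal{M}\}$ is a morphism of $\bigvee$-semilattices, since $\sum_{i}N_{i}\nleq\mathcal{M}$ holds iff some $N_{i}\nleq\mathcal{M}$, so $m(\sum_{i}N_{i})=\bigcup_{i}m(N_{i})$. Proposition \ref{hullker} then produces a right adjoint $m_{*}$ described concretely by $m_{*}(W)=\bigvee\{L\leq_{fi}M\mid m(L)\subseteq W\}$. Specializing to $W=m(N)$ yields
\[\tau(N)=m_{*}(m(N))=\bigvee\bigl\{L\leq_{fi}M\bigm\vert m(L)\subseteq m(N)\bigr\}.\]

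Next I would rewrite the inclusion $m(L)\subseteq m(N)$ by contraposing on each $\mathcal{M}\in\mx(M)$: it says that whenever $L\nleq\mathcal{M}$ one also has $N\nleq\mathcal{M}$, equivalently that every $\mathcal{M}\in\mx(M)$ containing $N$ must contain $L$. In other words, $m(L)\subseteq m(N)$ if and only if $L\leq\bigcap\{\mathcal{M}\in\mx(M)\mid N\leq\mathcal{M}\}$. Substituting back gives
\[\tau(N)=\bigvee\bigl\{L\leq_{fi}M\bigm\vert L\leq\textstyle\bigcap\{\mathcal{M}\in\mx(M)\mid N\leq\mathcal{M}\}\bigr\}.\]
Since $\Lambda^{fi}(M)$ is closed under arbitrary sums inherited from $\Lambda(M)$, this join is itself a fully invariant submodule contained in the intersection, and it manifestly majorizes every fully invariant submodule below the intersection. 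Hence $\tau(N)$ is the largest fully invariant submodule contained in $\bigcap\{\mathcal{M}\in\mx(M)\mid N\leq\mathcal{M}\}$.

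The proof is essentially bookkeeping with an adjunction, so there is no serious obstacle. The only point requiring attention is the last step: the intersection $\bigcap\{\mathcal{M}\in\mx(M)\mid N\leq\mathcal{M}\}$ need not itself be fully invariant, so one must be careful to interpret ``largest'' as ``largest fully invariant submodule below it'', and to verify that the sum of a family of fully invariant submodules remains fully invariant (clear, since each summand is stable under $\End_{R}(M)$). With this observation the claim is immediate and also confirms, a posteriori, that $\tau$ defined in this way is an inflator, idempotent, and meet-preserving — i.e., the multiplicative nucleus promised in the paragraph preceding the remark.
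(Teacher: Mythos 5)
Your argument is correct and is essentially the intended one: the paper does not prove this remark but delegates it to the cited Proposition 3.20 of the quasi-quantale paper, which is exactly the hull-kernel adjunction computation you carry out (explicit right adjoint $m_{*}(W)=\bigvee\{L\leq_{fi}M\mid m(L)\subseteq W\}$, contraposition of $m(L)\subseteq m(N)$, and closure of $\Lambda^{fi}(M)$ under sums). The only caveat is your closing aside that this ``confirms'' $\tau$ is meet-preserving/multiplicative: inflation and idempotence do follow formally, but multiplicativity of $\tau$ needs the product structure and the hypothesis that $M$ is projective in $\sigma[M]$ (it is the content of the cited Theorem 3.21), not just the bookkeeping above -- though this does not affect the statement actually being proved.
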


%

\begin{prop}
Let $M$ be an $R$-module, then $\prt(M)\subseteq \Lambda^{fi}(M)_{\mu}.$
\end{prop}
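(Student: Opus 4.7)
Fix $P\in\prt(M)$, so $P=\annm(S)$ for some simple $S\in\sigma[M]$. The first observation I would record is that $P$ sits inside $\Lambda^{fi}(M)$ in the first place, because (as quoted in the module-theoretic preliminaries) annihilators of this form are automatically fully invariant. With this in hand, the whole task reduces to showing $\tau(P)=P$. Since $\tau$ is an inflator, $P\leq\tau(P)$ is automatic, so the job is the reverse inequality.

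By the remark stated just before the proposition, $\tau(P)$ is the largest fully invariant submodule contained in $\bigcap\{\mathcal{M}\in\mx(M)\mid P\leq\mathcal{M}\}$. Since $P$ itself is already fully invariant, it will be enough to prove the sharper equality
\[
P \;=\; \bigcap\{\mathcal{M}\in\mx(M)\mid P\leq\mathcal{M}\},
\]
for then $\tau(P)\leq P$ follows from the defining maximality. The plan, therefore, is to exhibit $P$ as an intersection of maximal submodules containing $P$.

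The key observation is that simplicity of $S$ forces every nonzero $f\in\Hom(M,S)$ to be surjective, so each such $\ker f$ is a maximal submodule of $M$, and clearly $P=\annm(S)\leq\ker f$. Moreover $\Hom(M,S)\neq 0$, since otherwise $\annm(S)=M$, contradicting $P<M$. Starting from the definition $\annm(S)=\bigcap\{\ker f\mid f\in\Hom(M,S)\}$ and discarding the zero morphism (whose kernel is $M$ and contributes nothing), I would write
\[
P \;=\; \bigcap\{\ker f\mid 0\neq f\in\Hom(M,S)\}.
\]
Every term on the right is a maximal submodule of $M$ containing $P$, so this intersection dominates $\bigcap\{\mathcal{M}\in\mx(M)\mid P\leq\mathcal{M}\}$; the reverse containment is immediate because each $\mathcal{M}$ in that second family contains $P$ by hypothesis. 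The two intersections therefore agree with $P$, giving $\tau(P)=P$ and the conclusion $P\in\Lambda^{fi}(M)_{\mu}$.

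I do not anticipate a real obstacle here: the entire argument rests on the elementary fact that a nonzero map into a simple module has maximal kernel, together with the two already-quoted structural remarks (that $\annm(-)$ lands in $\Lambda^{fi}(M)$, and that $\tau(N)$ is the largest fully invariant submodule inside the intersection of the maximals above $N$). The subtlety worth watching is simply to make sure the relevant $\Hom$-set is nonempty so that the intersection is not vacuous, which is exactly where the hypothesis $P<M$ gets used.
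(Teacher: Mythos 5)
Your proof is correct and follows essentially the same route as the paper's: write $P=\annm(S)=\bigcap\{\ker f\mid 0\neq f\in\Hom(M,S)\}$, note each such kernel is maximal and contains $P$, conclude that $\bigcap\{\mathcal{M}\in\mx(M)\mid P\leq\mathcal{M}\}=P$, and then use that $\tau(P)$ is the largest fully invariant submodule inside that intersection together with $P\leq\tau(P)$. Your added care about nonemptiness of $\Hom(M,S)$ (via $P<M$) and the explicit remark that $\annm(S)$ is fully invariant are fine and only make the paper's terse argument more complete.
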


\begin{proof}
If $P\in\prt(M),$ then $P=\annm(S)=\bigcap\{Ker(f)\mid f\in\Hom(M,S)\},$
with $S$ a simple module. Thus, we can take only the nonzero morphisms in $\Hom(M,S)$, hence $P$ is an intersection of maximal submodules, therefore
\[\tau(P)\leq \bigcap\{\mathcal{M}\in \mx(M)\mid P\leq\mathcal{M}\}\leq P.\]
Since always $P\leq \tau(P)$, then $P=\tau(P)$. Hence, $\prt(M)\subseteq \Lambda^{fi}(M)_{\mu}$.
\end{proof}

\begin{prop}\label{ann}
Let $M$ be projective in $\sm$ and $K$ be a proper fully invariant submodule of $M$. Then 
\vspace{-4pt}
\[\bigcap\{\annm\left(M/\mathcal{M}\right)\mid K\leq\mathcal{M}\in \mx(M)\} = \bigcap\{\mathcal{M}\in \mx(M)\mid K\leq \mathcal{M}\}.\]
\end{prop}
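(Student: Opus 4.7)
The plan is to prove the two inclusions separately; write $A$ for the left-hand side and $B$ for the right-hand side.

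For $A \leq B$, the key observation is that for each $\mathcal{M}\in\mx(M)$ the canonical projection $\pi\colon M\to M/\mathcal{M}$ satisfies $\ker(\pi)=\mathcal{M}$, and $\pi\in\Hom(M,M/\mathcal{M})$, so $\mathcal{M}$ is one of the kernels appearing in the definition of $\ann_M(M/\mathcal{M})$. Hence $\ann_M(M/\mathcal{M})\leq\mathcal{M}$ for each $\mathcal{M}$ with $K\leq\mathcal{M}$, and intersecting term-by-term over this index set yields $A\leq B$.

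For $B\leq A$, I would fix $\mathcal{M}\in\mx(M)$ with $K\leq\mathcal{M}$ and show $B\leq\ann_M(M/\mathcal{M})$; the conclusion then follows by intersecting over all such $\mathcal{M}$. Since $\ann_M(M/\mathcal{M})=\bigcap\{\ker(f)\mid f\in\Hom(M,M/\mathcal{M})\}$, it suffices to prove $B\leq\ker(f)$ for every $f\colon M\to M/\mathcal{M}$. The central step exploits projectivity: because $M$ is projective in $\sigma[M]$ and the canonical $\pi\colon M\to M/\mathcal{M}$ is an epimorphism in $\sigma[M]$, every such $f$ lifts to some $g\colon M\to M$ with $\pi g=f$, so $\ker(f)=g^{-1}(\mathcal{M})$. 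Since $K\leq_{fi}M$, we have $g(K)\leq K\leq\mathcal{M}$, which means $K\leq g^{-1}(\mathcal{M})=\ker(f)$. Now split into cases: if $f=0$ then $\ker(f)=M$, trivially containing $B$; otherwise, since $M/\mathcal{M}$ is simple, $f$ is surjective, so $\ker(f)$ is itself a maximal submodule of $M$ containing $K$, hence $\ker(f)$ is one of the terms of the intersection defining $B$ and so $B\leq\ker(f)$.

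The only real subtlety is recognizing that the fully invariant hypothesis on $K$ cannot be applied to an arbitrary kernel $\ker(f)$ directly; it must be transported through an endomorphism of $M$. This is exactly what the projectivity of $M$ in $\sigma[M]$ provides via the lifting $g$, and this step is the one doing all the work in the proof. Everything else is bookkeeping with intersections.
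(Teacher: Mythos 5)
Your proof is correct and follows essentially the same route as the paper: both inclusions are handled by observing that for every nonzero $f\in\Hom(M,M/\mathcal{M})$ the kernel $\ker(f)$ is a maximal submodule of $M$ containing $K$, so that $\annm(M/\mathcal{M})$ is an intersection of maximal submodules containing $K$. The only cosmetic difference is that the paper establishes $K\leq\ker(f)$ via the product formalism ($K_M(M/\ker(f))=0$ since $M/\ker(f)\cong M/\mathcal{M}$), whereas you unpack that same fact into the explicit projective lifting $g$ combined with full invariance of $K$ --- which is exactly how the cited product identities are proved.
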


\begin{proof}
Let $\mathcal{M}\in \mx(M)$ such that $K\leq\mathcal{M}$. Since  $\annm(M/\mathcal{M})\leq \mathcal{M},$ then \vspace{-5pt}
\[ \bigcap\{\annm\left(M/\mathcal{M}\right)\mid K\leq\mathcal{M}\in \mx(M)\} \leq \bigcap\{\mathcal{M}\in \mx(M)\mid K\leq \mathcal{M}\}.\]

Now, let $f\in \Hom(M,M/\mathcal{M})$ be nonzero and $A=\ker(f)$. Notice that $A$ is a maximal submodule of $M$. Since  $\frac{M}{A}\cong\frac{M}{\mathcal{M}}$, then $K_M(\frac{M}{A})=0$. Therefore $K\leq \annm(M/A)\leq A$. This implies that $\annm(M/\mathcal{M})$ is an intersection of maximal submodules containing $K$. Thus 
\[\bigcap\{\mathcal{M}\in \mx(M)\mid K\leq \mathcal{M}\}\leq  \bigcap\{\annm\left(M\mathcal{M}\right)\mid K\leq\mathcal{M}\in \mx(M)\}.\]
\end{proof}

\begin{thm}\label{rm}
Let $M$ be projective in $\sm$. Then 
\[\Lambda^{fi}(M)_{\mu}=\{N\in\Lambda^{fi}(M)\mid  N \text{ is a  semiprimitive submodule }\}\cup\{M\}\]
is an spatial frame. Moreover, \[\Lambda^{fi}(M)_{\mu}\cong\mathcal{O}(\mx(M))\] \noindent and  $K=\bigcap\{\annm\left(\frac{M}{\mathcal{M}}\right)\mid K\leq\mathcal{M}\in \mx(M)\},\,$  for every $K\in SPm(M).$
\end{thm}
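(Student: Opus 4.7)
The plan is to reduce everything to the adjunction $(m,m_*)$ already set up, and then read off each claim. I would proceed in three stages: first, identify $\Lambda^{fi}(M)_{\mu}$ with $\mathcal{O}(\mx(M))$ as a frame (which immediately gives spatiality); second, describe the fixed points of $\tau$ explicitly as the semiprimitive submodules together with $M$; third, note that the intersection formula falls out of that description.

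For the first stage, recall that by the definition of the topology on $\mx(M)$ the map $m\colon\Lambda^{fi}(M)\to\mathcal{O}(\mx(M))$ is surjective. A standard adjunction fact says that whenever $f^{*}\colon A\to B$ is a surjective $\bigvee$-semilattice morphism with right adjoint $f_{*}$, the correstriction of $f^{*}$ to the fixed points of $f_{*}f^{*}$ is an order isomorphism onto $B$, with inverse $f_{*}$. Applied here, this exhibits an isomorphism $\Lambda^{fi}(M)_{\mu}\cong\mathcal{O}(\mx(M))$. Since $\mathcal{O}(\mx(M))$ is the topology of a genuine space, it is a spatial frame, and the isomorphism transports both the frame structure and the spatiality to $\Lambda^{fi}(M)_{\mu}$.

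For the second stage, one inclusion is already available: by the proposition preceding Proposition \ref{ann} every primitive submodule lies in $\Lambda^{fi}(M)_{\mu}$, and the fixed points of an inflator are closed under arbitrary meets (if each $x_i$ is fixed then $\tau(\bigwedge_i x_i)\leq\tau(x_i)=x_i$ forces $\tau(\bigwedge_i x_i)=\bigwedge_i x_i$); together with $\tau(M)=M$ this shows that every semiprimitive submodule and $M$ itself lie in $\Lambda^{fi}(M)_{\mu}$. For the reverse inclusion, take $K\in\Lambda^{fi}(M)_{\mu}$ with $K\neq M$. By the remark preceding the theorem, $\tau(K)$ is the largest fully invariant submodule contained in $\bigcap\{\mathcal{M}\in\mx(M)\mid K\leq\mathcal{M}\}$. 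Proposition \ref{ann} rewrites this intersection as $\bigcap\{\annm(M/\mathcal{M})\mid K\leq\mathcal{M}\in\mx(M)\}$, and the latter is already fully invariant, being a meet of fully invariant submodules. Hence this expression literally equals $\tau(K)=K$, which simultaneously exhibits $K$ as an intersection of primitive submodules (so $K$ is semiprimitive) and yields the final displayed formula. The family $\{\mathcal{M}\in\mx(M)\mid K\leq\mathcal{M}\}$ cannot be empty, for then $\tau(K)=M$ would contradict $K<M$, so the expression is well defined.

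The main obstacle is keeping careful track of the two intersections, $\bigcap\mathcal{M}$ and $\bigcap\annm(M/\mathcal{M})$: the second is automatically fully invariant while the first need not be, and it is exactly Proposition \ref{ann} that lets the second play the role of the ``largest fully invariant part'' of the first. Once that identification is in place, all three assertions of the theorem become formal consequences of the adjunction machinery and the earlier results on multiplicative nuclei.
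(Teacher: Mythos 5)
Your proposal is correct and follows essentially the same route as the paper: it uses the adjunction $(m,m_*)$ and \cite[Corollary 3.11]{mauquantale}-type facts to get the frame isomorphism $\Lambda^{fi}(M)_{\mu}\cong\mathcal{O}(\mx(M))$, the observation that primitive submodules are fixed points of $\tau$ (so semiprimitive ones are too, since fixed points of a monotone inflator are closed under meets), and Proposition \ref{ann} together with the description of $\tau(K)$ as the largest fully invariant submodule inside $\bigcap\{\mathcal{M}\in\mx(M)\mid K\leq\mathcal{M}\}$ to identify every proper fixed point with $\bigcap\{\annm(M/\mathcal{M})\mid K\leq\mathcal{M}\in\mx(M)\}$. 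The only cosmetic difference is that you derive the first inclusion abstractly from meet-closure of fixed points, where the paper chases the chain of inequalities directly; both arguments are sound.
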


\begin{proof}
Let $N$ be a semiprimitive submodule, then there exist $\{P_i\}_{i\in I}\subseteq \Lambda^{fi}(M)_{\mu}$ such that 
$N=\displaystyle\bigcap\{P_{i}\mid{i\in I}\}.$
Since every primitive submodule is an intersection of maximal submodules containing $N$, then 
\[N\leq \tau(N) \leq \bigcap\{\mathcal{M}\in \mx(M)\mid N\leq \mathcal{M}\}
 \leq \bigcap_I\{P_{i}\mid P_{i}\in \prt(M)\} =N.\]
Thus $\tau(N)=N$. Clearly, $M\in SPm(M).$

Now let $K$ be a proper submodule of $M$ such that $K=\tau (K)$. Since $K$ is fully invariant, $K_M(\frac{M}{\mathcal{M}})=0$, and $K\leq \annm(M/\mathcal{M})\leq \mathcal{M}$ for every maximal $\mathcal{M}$ containing $K$. By Proposition \ref{ann} we have that
\begin{equation*}
\begin{split}
K& \leq \bigcap\{\annm{\left({M}/{\mathcal{M}}\right)}\mid K\leq\mathcal{M}{\in} \mx(M)\}= \bigcap\{\mathcal{M}{\in} \mx(M)\mid K\leq \mathcal{M}\}.
\end{split}
\end{equation*} 

Each $\annm(\frac{M}{\mathcal{M}})$ is a primitive submodule and is fully invariant. Since $K=\tau(K)$ is the largest fully invariant submodule contained in $\bigcap\{\mathcal{M}\in \mx(M)\mid K\leq \mathcal{M}\}$, then $K=\bigcap\{\annm\left(\frac{M}{\mathcal{M}}\right)\mid K\leq\mathcal{M}\in \mx(M)\}$.

By \cite[Corolary 3.11]{mauquantale}, it follows that $\Lambda^{fi}(M)_{\mu}$ is a frame and by construction 
$\Lambda^{fi}(M)_{\mu}\cong\mathcal{O}(\mx(M)).$

%
\end{proof}

\begin{obs}
Inasmuch as Theorem \ref{rm}, for the remainder of the text, we will denote  by $SPm(M)$
the spatial frame of all semiprimitive submodules $\Lambda^{fi}(M)_{\mu}$ for a projective module $M$ in $\sigma[M].$
\end{obs}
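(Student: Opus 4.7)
The final statement is not a theorem or lemma but the Remark (\emph{obs} environment) introducing the abbreviation $SPm(M)$ for the frame $\Lambda^{fi}(M)_\mu$. My plan, therefore, is to observe that no proof is required: every mathematical claim embedded in the remark --- that $\Lambda^{fi}(M)_\mu$ is a spatial frame, and that (apart from the top element $M$) its members are precisely the semiprimitive submodules --- has just been established in Theorem \ref{rm}, which the remark explicitly cites via the opening phrase ``Inasmuch as Theorem \ref{rm}\dots''. The role of the remark is purely to fix a compact symbol for this frame so that later sections can refer to it without having to repeat the longer notation $\Lambda^{fi}(M)_\mu$.

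The one hygiene point I would make explicit, just to justify that the abbreviation is well-defined, is the following: under the standing hypothesis that $M$ is projective in $\sigma[M]$, the adjunction $m \dashv m_*$ between $\Lambda^{fi}(M)$ and $\mathcal{O}(\mx(M))$ is canonically determined by $M$, so the multiplicative nucleus $\tau = m_* \circ m$ and its fixed-point set $\Lambda^{fi}(M)_\mu$ depend only on $M$. Setting $SPm(M) := \Lambda^{fi}(M)_\mu$ therefore yields an unambiguous assignment on the class of modules to which the subsequent theory applies. There is no mathematical obstacle to overcome; the only ``difficulty'' is the meta-point that a remark of this kind does not admit a proof in the ordinary sense, and the right response is simply to record the new notation and move on.
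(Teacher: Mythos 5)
Your reading is correct: this is a purely notational remark, and the paper supplies no proof for it because all of its mathematical content (spatiality of $\Lambda^{fi}(M)_{\mu}$ and its identification with the semiprimitive submodules together with $M$) is exactly Theorem \ref{rm}, which the remark cites. Your observation that the nucleus $\tau=m_*\circ m$, and hence the fixed-point frame, is canonically determined by $M$ is a reasonable hygiene point, and nothing further is required.
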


\begin{prop}\label{prmprim}
Let $M$ be projective in $\sm$. Then 
\[\prt(M)\subseteq\pt(SPm(M))\subseteq \spec(M).\]
\end{prop}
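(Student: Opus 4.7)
The plan is to prove the two inclusions separately. The first relies on the fact that primitive submodules are already prime (Proposition \ref{primprime}), while the second uses the multiplicative nucleus $\tau$ to transfer the $\wedge$-irreducibility in $SPm(M)$ to the primeness condition on all fully invariant submodules.

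For $\prt(M)\subseteq\pt(SPm(M))$, fix $P=\annm(S)\in\prt(M)$. Since $P$ is trivially an intersection of primitive submodules, $P\in SPm(M)$, and $P\neq M$ by the very definition of primitive submodule. To check that $P$ is $\wedge$-irreducible in $SPm(M)$, take $N,L\in SPm(M)$ with $N\cap L\leq P$. Since $N,L\leq_{fi}M$ and $M$ is projective in $\sigma[M]$, for every $f\colon M\to L$ the composite with the inclusion $L\hookrightarrow M$ is an endomorphism of $M$, so $f(N)\leq N$; hence $N_ML\leq N\cap L\leq P$. By Proposition \ref{primprime}, $P$ is prime, so $N\leq P$ or $L\leq P$, and $P\in\pt(SPm(M))$.

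For $\pt(SPm(M))\subseteq\spec(M)$, fix $p\in\pt(SPm(M))$. Then $p$ is a proper fully invariant submodule with $p=\tau(p)$. By Proposition \ref{1.9} it suffices to verify the primeness condition for fully invariant factors. Let $K,L\leq_{fi}M$ with $K_ML\leq p$. Applying the multiplicative nucleus $\tau$ and using the identity $\tau(K)\wedge\tau(L)=\tau(K_ML)$ we obtain
\[\tau(K)\cap\tau(L)=\tau(K_ML)\leq\tau(p)=p.\]
Since $\tau(K),\tau(L)\in SPm(M)$ and $p$ is a point of $SPm(M)$, either $\tau(K)\leq p$ or $\tau(L)\leq p$; combined with $K\leq\tau(K)$ and $L\leq\tau(L)$ this yields $K\leq p$ or $L\leq p$. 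Hence $p\in\spec(M)$.

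The main obstacle is the second inclusion: being a point of $SPm(M)$ only gives a $\wedge$-irreducibility test against semiprimitive submodules, whereas primeness must be verified against arbitrary fully invariant ones. The key move is to replace $K$ and $L$ by their $\tau$-closures, which both lie in $SPm(M)$ and preserve the relevant inequality precisely because $\tau$ is a \emph{multiplicative} nucleus; without this multiplicativity the reduction would fail.
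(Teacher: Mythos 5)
Your proposal is correct and follows essentially the same route as the paper: the first inclusion comes from $N_ML\leq N\cap L$ for fully invariant submodules together with Proposition \ref{primprime} (primitive implies prime), and the second from applying the multiplicative nucleus $\tau$ to replace arbitrary fully invariant $K,L$ by $\tau(K),\tau(L)\in SPm(M)$. The only difference is that you spell out details the paper leaves implicit (e.g.\ that $P\in SPm(M)$ and why $N_ML\leq N\cap L$), which is harmless.
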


\begin{proof}
Since $K_ML\leq K\cap L$ for all $K, L\in\Lambda^{fi}(M)$, every prime submodule is $\wedge$-irreducible. Then $\prt(M)\subseteq\pt(SPm(M))$.

Let $Q\in \pt(SPm(M))$ and $N, L\in\Lambda^{fi}(M)$ such that $N_ML\leq Q$. We can apply the nucleus $\tau$ and we get 
\[\tau(N)\cap\tau(L)=\tau(N\cap L)=\tau(N_ML)\leq \tau(Q)=Q\]
Since $Q$ is a point in $SPm(M)$ then $N\leq\tau(N)\leq Q$ or $L\leq\tau(L)\leq Q$. 
\end{proof}

After last Proposition is natural to ask when $\prt(M)=\pt(SPm(M))$. In general this equality does not hold, as the following example shows:

\begin{ej}
Consider $\mathbb{Z}$ the ring of integers. Then $\Lambda(\mathbb{Z})=\Lambda^{fi}(\mathbb{Z})$. Since $\mathbb{Z}$ is commutative
$\prt(\mathbb{Z})=\{p\mathbb{Z}\mid p\text{ is prime }\}=\mx(\mathbb{Z}).$

Now, note that $0=\bigcap\{p\mathbb{Z}\mid p\text{ is prime }\}\in SPm(\mathbb{Z})$. Since $\mathbb{Z}$ is uniform then $0$ is $\cap-$irreducible, that is, $0\in\pt(SPm(\mathbb{Z}))$ but it is not a primitive ideal. Thus $\pt(SPm(\mathbb{Z}))\neq \prt(\mathbb{Z})$.
\end{ej}

Denote by $M$-Simp a complete set of representatives of isomorphism classes of simple modules in $\sm$. Next propositions give sufficient conditions on a module $M$ in order to get the equality $\pt(SPm(M))=\prt(M)$.

\begin{prop}\label{simpf}
Let $M$ be projective in $\sm$ such that $M$-Simp is finite. Then $pt(SPm(M))=\prt(M)$.
\end{prop}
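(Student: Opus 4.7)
The plan is to prove the nontrivial inclusion $\pt(SPm(M)) \subseteq \prt(M)$, since $\prt(M) \subseteq \pt(SPm(M))$ is already given by Proposition \ref{prmprim}.

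First I would take $Q \in \pt(SPm(M))$. Since $Q \in SPm(M)$, Theorem \ref{rm} gives the representation
\[Q = \bigcap\{\annm(M/\mathcal{M}) \mid Q \leq \mathcal{M} \in \mx(M)\}.\]
Now I would use the standing hypothesis that $M\text{-Simp}$ is finite. Because $\annm(-)$ depends only on the isomorphism class of its argument, and each quotient $M/\mathcal{M}$ with $\mathcal{M}$ maximal lies in $M\text{-Simp}$, the set of primitive submodules appearing on the right is finite. Hence I can rewrite
\[Q = P_1 \cap P_2 \cap \cdots \cap P_k,\]
where each $P_i = \annm(S_i)$ is a primitive submodule (taking one representative per isomorphism class occurring above).

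Next I would invoke the $\wedge$-irreducibility of $Q$ in $SPm(M)$. The meet in $SPm(M)$ agrees with intersection in $\Lambda^{fi}(M)$ (an intersection of semiprimitive submodules is clearly semiprimitive), so $P_1 \wedge \cdots \wedge P_k = Q \leq Q$. An easy induction on $k$ using Definition \ref{point} shows that $Q$ being a point forces $P_i \leq Q$ for at least one index $i$. On the other hand, $Q \leq P_i$ holds automatically because $Q$ is the intersection of all the $P_j$'s. Therefore $Q = P_i \in \prt(M)$, which gives the desired inclusion.

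The only subtle point is making sure the finite meet in $SPm(M)$ can be identified with the finite intersection in $\Lambda^{fi}(M)$, but this is immediate from the description of $SPm(M)$ as the set of intersections of primitive submodules (together with $M$), so no real obstacle arises. The essential content is simply that finiteness of $M\text{-Simp}$ collapses the a priori infinite intersection presenting $Q$ to a finite one, at which point $\wedge$-irreducibility does the rest.
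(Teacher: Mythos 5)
Your proposal is correct and follows essentially the same route as the paper's own proof: apply Theorem \ref{rm} to write the point as an intersection of primitives, use finiteness of $M$-Simp to reduce to a finite intersection, and then let $\wedge$-irreducibility force the point to equal one of the primitives. Your write-up is in fact slightly more explicit than the paper's about why the intersection collapses to a finite one and about citing Proposition \ref{prmprim} for the reverse inclusion.
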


\begin{proof}
Let $K\in\pt(SPm(M))$. By  Theorem \ref{rm},
\[K=\bigcap\left\{\annm\left(M/\mathcal{M}\right)\mid K\leq\mathcal{M}\in \mx(M)\right\}.\]
By hypothesis $M$-Simp is finite then this intersection is finite. Every $\annm(M/\mathcal{M})$ is in $SPm(M)$ and $K$ is a $\wedge$-irreducible then, there exists $\mathcal{M}\in \mx(M)$ such that $K=\annm\left(\frac{M}{\mathcal{M}}\right)$. Thus $K$ is primitive.
\end{proof}

\begin{ej}
Examples of rings satisfying the last proposition are:
\begin{enumerate}[(a)]
\item Commutative artinian rings.
\item $R=\left(\begin{matrix}
\mathbb{Q} & 0 \\
\mathbb{R} & \mathbb{R}
\end{matrix}\right)$
\item Rings of upper triangular matrices with coefficients in a field. 
\end{enumerate}
\end{ej}

Now, recall that a ring $R$ is called $pm-$ring if every prime ideal is contained in a unique maximal ideal. In the study of $Spec(R)$ and $Max(R)$ for a commutative ring,  $pm-$rings had taken an important role, for instance see  \cite{sunrings}.
 So, as a generalization of the above concept, we introduce the following definition for modules 

\begin{dfn}\label{pmmodule}
Let $M$ be an $R$-module. It is said $M$ is a \emph{pm-module} if every prime submodule is contained in a unique maximal submodule.
\end{dfn}

\begin{prop}\label{pmpt}
Let $M$ be projective in $\sm$. If $M$ is a pm-module then 
\[\pt(SPm(M))=\prt(M)=\mx(M).\]
\end{prop}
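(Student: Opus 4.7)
The plan is to leverage the chain $\prt(M) \subseteq \pt(SPm(M)) \subseteq \spec(M)$ from Proposition \ref{prmprim} together with the intersection description of elements of $SPm(M)$ from Theorem \ref{rm}. The unifying mechanism is that each piece of the argument produces a family of maximal submodules all containing some prime submodule; the pm-hypothesis then collapses that family to a single maximal, which in turn pins down the primitive submodule.

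First I would show $\pt(SPm(M)) \subseteq \prt(M)$. Given $P \in \pt(SPm(M))$, the chain from Proposition \ref{prmprim} makes $P$ prime, so by the pm-hypothesis there is a unique maximal $\mathcal{M}$ with $P \leq \mathcal{M}$. Theorem \ref{rm} writes
\[P = \bigcap\{\annm(M/\mathcal{M}')\mid P\leq \mathcal{M}'\in \mx(M)\},\]
and since the indexing set is the singleton $\{\mathcal{M}\}$, we conclude $P = \annm(M/\mathcal{M}) \in \prt(M)$. Combined with Proposition \ref{prmprim} this gives $\pt(SPm(M)) = \prt(M)$.

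Next I would prove $\mx(M) \subseteq \prt(M)$. Take $\mathcal{M} \in \mx(M)$ and set $P := \annm(M/\mathcal{M})$; by definition $P \in \prt(M)$, and $P \leq \mathcal{M}$ since the canonical projection has kernel $\mathcal{M}$. By Proposition \ref{primprime}, $P$ is prime. To get $\mathcal{M} \leq P$ (which would force equality $\mathcal{M} = P$), it suffices to show $\mathcal{M}_M(M/\mathcal{M}) = 0$. Suppose instead that some $f \in \Hom(M, M/\mathcal{M})$ satisfies $f(\mathcal{M}) \neq 0$; by simplicity of $M/\mathcal{M}$, $f$ is surjective and $\ker f$ is another maximal submodule with $M/\ker f \cong M/\mathcal{M}$, whence $P = \annm(M/\ker f) \leq \ker f$. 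The pm-hypothesis then forces $\ker f = \mathcal{M}$, contradicting $f(\mathcal{M})\neq 0$. Hence $\mathcal{M} = P \in \prt(M)$, which in particular shows that every maximal submodule of a pm-module is fully invariant.

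Finally, for $\prt(M) \subseteq \mx(M)$, take $P = \annm(S)$ with $S$ simple. Again $P$ is prime, so pm provides a unique maximal $\mathcal{M} \geq P$. Since $P \neq M$ there exists a nonzero $f\colon M \to S$; its kernel is maximal with $M/\ker f \cong S$, $P \leq \ker f$, and pm forces $\ker f = \mathcal{M}$ for every such $f$. Therefore $P = \bigcap\{\ker f \mid 0 \neq f \in \Hom(M,S)\} = \mathcal{M}$, so $P \in \mx(M)$. The expected main obstacle is the middle inclusion, where $\mathcal{M}$ is not assumed fully invariant a priori; the trick is to exploit any hypothetical $f$ with $f(\mathcal{M}) \neq 0$ to manufacture a second maximal submodule above $\annm(M/\mathcal{M})$ and appeal to pm for a contradiction.
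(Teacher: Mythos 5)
Your proof is correct and follows essentially the same mechanism as the paper's: every relevant submodule (primitive, semiprimitive point) is an intersection of maximal submodules containing a prime, and the pm-hypothesis collapses that family to a single maximal. The only organizational difference is that you prove $\mx(M)\subseteq\prt(M)$ by a standalone contradiction argument showing $\mathcal{M}_M(M/\mathcal{M})=0$, whereas the paper gets it for free from the already-established implication ``primitive $\Rightarrow$ maximal'' applied to $\annm(M/\mathcal{M})\leq\mathcal{M}$; both are valid.
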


\begin{proof}
We have that a primitive submodule $P$ is an intersection of maximal submodules, so if $M$ is a pm-module then $P$ has to be maximal. On the other hand, if $\mathcal{M}\in \mx(M)$ the $\annm(M/\mathcal{M})$ is primitive submodule and it is contained in $\mathcal{M}$. Thus $\mathcal{M}=\annm(M/\mathcal{M})$. Hence we have that $\mx(M)=\prt(M)$.

By Proposition \ref{prmprim} $\pt(SPm(M))\subseteq \spec(M)$. Now, let $Q\in\pt(SPm(M))$, then $Q$ is semiprimitive, that is, $Q$ is an intersection of primitive submodules, but the primitive submodules are the maximal submodules. Since $Q$ is prime in $M$, by hypothesis $Q$ is contained in a unique maximal submodule thus $Q$ is maximal. Therefore $\pt(SPm(M))\subseteq \mx(M)=\prt(M)$ and by Proposition \ref{prmprim} we have that $\prt(M)\subseteq\pt(SPm(M))$.
\end{proof}

Now we are going to show that the converse of Proposition \ref{simpf} is not true in general.

\begin{ej}
\begin{enumerate}[(a)]
\item Let $X$ be a discrete space. Let $C(X)$ be the (commutative) ring of all continuous functions from $X$ to $\mathbb{R}$ with the pointwise operations. By \cite[Theorem 2.11]{gillmanrings} the ring $C(X)$ is a pm-module (over itself). Thus by Proposition \ref{pmpt} $\mx(C(X))=\prt(C(X))=\pt(SPm(C(X)))$. 
The maximal ideals of $C(X)$ can be described as: $\mathcal{M}\leq C(X)$ is a maximal ideal if and only if $\mathcal{M}=\langle f_x\rangle$ for some $x\in X$, where $f_x$ is the function defined as
\[f_x(y)=0\text{ if } x=y\]
\[f_x(y)=1 \text{ if } x\neq y\]
Therefore, the ring $C(X)$ has as many maximal ideals as points in $X$. Thus, if $X$ in not finite, $C(X)$-Mod has infinitely many non isomorphic simple modules and $\prt(C(X))=\pt(SPm(C(X)))$. 
\item Consider the boolean ring $R:=\mathbb{Z}_2^{\aleph_0}$ and note that $\mathcal{M}\in Max(R)$ if and only if $\mathcal{M}$ is prime. Also, their factors are not isomorphic $R-$modules  and $|R-simp|=\aleph_0.$
\end{enumerate}
\end{ej}

\section{Sobriety}\label{sec4}

In this section we will study the \emph{sobriety} of the  spaces $\spec(M)$ and $\mx(M),$ where $M$ is an $R$-module projective in $\sm.$ 
We will see that in this case, $\spec(M)$ is sober. On the other hand, in general $\mx(M)$ is not sober and then we will give sufficient and necessary conditions for being that.

For properties of sober spaces and theory related to this topic we refer to the reader to \cite{simmonspoint}, \cite{johnstone1986stone} and \cite{picado2012frames}. 

Given $S$  a topological space and $X$  any subset of $S,$ we will denote $X^{-}$ and $X'$ closure of $X$ and the complement of $X$ in $S,$ respectively. 

\begin{dfn}\label{sob1}
A  nonempty subset $X$ of a topological space $S$ is \emph{closed irreducible} if it is closed, and satisfies \[(X\cap U)\text{ and } 
(X \cap V)\text{ are not empty} \Rightarrow X\cap(U\cap V)\text{ is not empty }\] for all $U, V\in\mathcal{O}(S)$.
\end{dfn}

\begin{dfn}\label{sob2}
A topological space $S$ is \emph{sober} if each closed irreducible subset $X$ has a unique \emph{generic point}, that is, there exists a unique point $s\in X$ such that $X=s^{-}$.
\end{dfn}

The full subcategory of sober spaces is reflective in the category of topological spaces, therefore for every space $S$ we can associate a sober space $\sob(S)$ and a continuous morphism $\zeta\colon S\rightarrow\sob(S)$, this assignation is called the \emph{soberfication} of $S$ (see \cite{simmonspoint}
). 

The space $\sob(S)$ consists of all closed irreducible subsets of $S$ and its topology is given by the family of sets:
\[\natural{U}=\{X\in\sob(S)\mid X\cap U\neq\emptyset\}\]
with $U\in\mathcal{O}(S)$; and the continuous map is given by $\zeta(s)=s^{-}$.

As examples of sober spaces we have the following two results.

\begin{thm}\label{sob3}
For each frame $A$ the point space $\pt(A)$ is sober.
\end{thm}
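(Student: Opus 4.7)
The plan is to show that, for a closed irreducible $X \subseteq \pt(A)$, the element $p_X := \bigwedge X$ (meet taken in $A$) lies in $\pt(A)$ and is the unique generic point of $X$. First I would record two structural facts about the topology: since the excerpt tells us that $\mathcal{O}\pt(A)=\{U_A(a)\mid a\in A\}$ is literally the topology, the closed sets are exactly the subsets of the form $V(a):=\{q\in\pt(A)\mid a\leq q\}$ for $a\in A$; and for each $p\in\pt(A)$, the closure $\overline{\{p\}}$ equals $V(p)$ (the defining closed set $V(p)$ contains $p$, and any closed set $V(a)$ containing $p$ satisfies $a\leq p$, hence $V(a)\supseteq V(p)$).

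The main step is verifying that $p_X$ is a point of $A$. Since $X$ is irreducible it is nonempty, so picking any $q\in X$ gives $p_X\leq q<1$, hence $p_X\neq 1$. For $\wedge$-irreducibility, suppose $a\wedge b\leq p_X$ with $a\not\leq p_X$ and $b\not\leq p_X$; then there exist $q_1,q_2\in X$ with $a\not\leq q_1$ and $b\not\leq q_2$, so both $X\cap U_A(a)$ and $X\cap U_A(b)$ are nonempty. By irreducibility of $X$, the intersection $X\cap U_A(a)\cap U_A(b)$ is nonempty. Because $U_A$ is a frame morphism, $U_A(a)\cap U_A(b)=U_A(a\wedge b)$, so there exists $q\in X$ with $a\wedge b\not\leq q$, contradicting $a\wedge b\leq p_X\leq q$. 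This is the key step, and the only real content of the proof; everything else is formal.

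Once $p_X\in\pt(A)$ is established, I would show $X=V(p_X)=\overline{\{p_X\}}$. Writing $X=V(b)$ for some $b\in A$, the inequality $b\leq q$ for every $q\in X$ yields $b\leq p_X$; in particular $p_X\in V(b)=X$. The inclusion $X\subseteq V(p_X)$ is immediate from the definition of $p_X$, and if $q\in V(p_X)$ then $b\leq p_X\leq q$, so $q\in V(b)=X$. Hence $X=V(p_X)=\overline{\{p_X\}}$, so $p_X$ is a generic point.

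Finally, for uniqueness, if $r\in X$ is any generic point then $X=\overline{\{r\}}=V(r)=\{s\in\pt(A)\mid r\leq s\}$, so $r$ is the minimum of $X$ and therefore $r=\bigwedge X=p_X$. I do not anticipate any real obstacle; the conceptual content is entirely in translating the topological irreducibility of $X$ into the order-theoretic $\wedge$-irreducibility of $p_X$ via the fact that $U_A$ preserves finite meets.
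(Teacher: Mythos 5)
Your proof is correct and complete; the paper itself offers no argument here, merely citing Simmons's notes, and your construction (taking $p_X=\bigwedge X$ for a closed irreducible $X$, checking $\wedge$-irreducibility via $U_A(a)\cap U_A(b)=U_A(a\wedge b)$, and identifying $X=V(p_X)=\overline{\{p_X\}}$ with $p_X$ as the unique minimum) is exactly the standard proof found in the cited source. No gaps.
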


\begin{proof}
See \cite[Theorem 3.6]{simmonspoint}.
\end{proof}

\begin{obs}\label{sobpt}
Using this Theorem, it can be seen that for every topological space $S,$ the assignation $(-)'\colon\pt(\mathcal{O}(S))\to \sob(S),$ such that  $U\mapsto U',$
is a homeomorphism (see \cite{simmonspoint}).
\end{obs}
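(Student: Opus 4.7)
The plan is to verify three things: the map $U \mapsto U'$ is well-defined (lands in $\sob(S)$), is bijective, and identifies the basic open families on both sides.

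First I would check well-definedness. A point $U \in \pt(\mathcal{O}(S))$ is by definition an open set with $U \neq S$ such that $V \cap W \subseteq U$ implies $V \subseteq U$ or $W \subseteq U$, for all $V,W \in \mathcal{O}(S)$. I would show this is equivalent to $U'$ being a closed irreducible subset of $S$ in the sense of Definition \ref{sob1}. The condition $U \neq S$ is equivalent to $U' \neq \emptyset$, and the key equivalence is the contrapositive observation that $V \cap W \subseteq U$ if and only if $(V \cap W) \cap U' = \emptyset$, while $V \subseteq U$ if and only if $V \cap U' = \emptyset$. Thus the $\wedge$-irreducibility of $U$ translates verbatim into the irreducibility condition for the closed set $U'$. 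By the same token, the map $F \mapsto F'$ from $\sob(S)$ to $\pt(\mathcal{O}(S))$ is well-defined and is the two-sided inverse of $U \mapsto U'$, since complement is involutive on the power set of $S$.

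Next I would compare the topologies. The topology on $\pt(\mathcal{O}(S))$ consists of the sets $U_{\mathcal{O}(S)}(V) = \{U \in \pt(\mathcal{O}(S)) \mid V \not\leq U\}$ for $V \in \mathcal{O}(S)$, while the topology on $\sob(S)$ consists of the sets $\natural V = \{F \in \sob(S) \mid F \cap V \neq \emptyset\}$. For any open set $V$ and any point $U$, the condition $V \not\subseteq U$ is equivalent to the existence of a point of $S$ lying in $V$ but outside $U$, that is, to $V \cap U' \neq \emptyset$. Therefore the bijection sends $U_{\mathcal{O}(S)}(V)$ onto $\natural V$ for every $V \in \mathcal{O}(S)$, which shows simultaneously that the bijection is continuous and open, hence a homeomorphism.

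There is no real obstacle here, since both the set-theoretic inversion and the correspondence between basic opens are immediate from the definitions; the only point requiring a moment of care is the translation between $\wedge$-irreducibility in $\mathcal{O}(S)$ and topological irreducibility of the complement, which is a matter of taking contrapositives correctly. Theorem \ref{sob3} is the background justification that $\pt(\mathcal{O}(S))$ is indeed sober, but it is not directly invoked in the argument; rather, the remark shows that the point space realization of the soberification coincides with the classical one.
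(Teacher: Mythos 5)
Your proof is correct, and since the paper itself offers no argument for this remark (it merely defers to the cited reference), your self-contained verification via complementation is exactly the standard one: the contrapositive translation between $\wedge$-irreducibility of $U$ in $\mathcal{O}(S)$ and topological irreducibility of the closed set $U'$, together with the observation that $V\not\subseteq U$ iff $V\cap U'\neq\emptyset$, which matches $U_{\mathcal{O}(S)}(V)$ with $\natural V$. Your closing comment is also apt: the direct check does not actually need Theorem \ref{sob3}, which instead serves to explain why the target $\sob(S)$ of this bijection is sober.
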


\begin{prop}\label{specsob}
Let $M$ be projective in $\sm$. Then $\spec(M)$ is sober. 
\end{prop}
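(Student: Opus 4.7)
The plan is to verify the definition of sobriety directly: given an arbitrary nonempty closed irreducible subset $X\subseteq\spec(M)$, exhibit $N:=\bigcap X$ as its unique generic point. First I would note that every open subset of $\spec(M)$ has the form $U(K)$ for some $K\leq_{fi}M$, since $\Lambda^{fi}(M)$ is closed under arbitrary sums and $U\bigl(\bigvee K_i\bigr)=\bigcup U(K_i)$. Dually, every closed subset is of the form $V(K)=\{P\in\spec(M)\mid K\leq P\}$. Write $X=V(K)$, and set $N:=\bigcap\{P\mid P\in X\}$. Since $X\neq\emptyset$ and every $P\in X$ is a proper fully invariant submodule, $N$ is a proper fully invariant submodule of $M$ with $K\leq N$.

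The next step is to show $X=V(N)$. The inclusion $X\subseteq V(N)$ is immediate from the definition of $N$; conversely, $V(N)\subseteq V(K)=X$ because $K\leq N$.

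The heart of the argument is verifying that $N$ is prime. Assume $A,B\leq_{fi}M$ satisfy $A_MB\leq N$ and, for contradiction, that $A\not\leq N$ and $B\not\leq N$. Since $N$ is the intersection of the primes in $X$, there exist $P_1,P_2\in X$ with $A\not\leq P_1$ and $B\not\leq P_2$, i.e.\ $X\cap U(A)\neq\emptyset$ and $X\cap U(B)\neq\emptyset$. Closed-irreducibility of $X$ then forces $X\cap U(A)\cap U(B)\neq\emptyset$, so there is $P\in X$ with $A\not\leq P$ and $B\not\leq P$. But $A_MB\leq N\leq P$ and $P$ is prime with $A,B$ fully invariant, contradicting Definition~13. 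Hence $N$ is prime, so $N\in V(N)=X$, and $\overline{\{N\}}=V(N)=X$ since $V(N)$ is the smallest closed set containing $N$. For uniqueness, if $N'$ is another generic point of $X$, then $X=V(N)=V(N')$ with $N,N'\in X$, so $N\leq N'$ and $N'\leq N$.

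The step I expect to be the main obstacle is the primeness of $N$: all the ingredients (fully invariance of $N$, the use of Proposition~\ref{1.9}/Definition~13 which requires both factors fully invariant, and the translation of ``$A\not\leq N$'' into a nonempty intersection with $U(A)$) must line up. The projectivity of $M$ in $\sm$ is what makes the product $A_MB$ behave well on $\Lambda^{fi}(M)$ and guarantees that the prime elements in the sense of Definition~\ref{e} coincide with the prime submodules of Definition~13, so that the irreducibility hypothesis can be exchanged for a primeness statement about $N$. Everything else (that $X$ is of the form $V(K)$, that $\bigcap X$ is fully invariant and proper, and the uniqueness of the generic point) is routine.
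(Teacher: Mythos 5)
Your proof is correct, but it takes a genuinely different route from the paper. You verify sobriety directly from Definition \ref{sob2}: you write an arbitrary nonempty closed irreducible set as $X=V(K)$, show that $N=\bigcap X$ is a proper fully invariant submodule which is prime (using closed-irreducibility applied to the opens $U(A)$, $U(B)$ to produce a single $P\in X$ outside both, contradicting $A_MB\leq N\leq P$), and then check $\overline{\{N\}}=V(N)=X$ together with uniqueness via the $T_0$-type argument $V(N)=V(N')\Rightarrow N=N'$. The paper instead routes everything through frame machinery: the hull-kernel adjunction gives a multiplicative nucleus $\mu$ whose fixed points form the frame $SP(M)$ of semiprime submodules, the cited result $\spec(M)=\pt(SP(M))\cong\pt(\mathcal{O}(\spec(M)))$ identifies $\spec(M)$ with the point space of a frame, and Theorem \ref{sob3} (point spaces of frames are sober) plus Remark \ref{sobpt} finish the job. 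The essential content is the same in both cases --- namely that closed irreducible subsets of $\spec(M)$ correspond exactly to prime submodules --- but you prove this correspondence by hand where the paper imports it from \cite{mauquantale}. What your approach buys is a self-contained, elementary argument that does not depend on the nucleus formalism or on the external propositions 4.24 and 4.29 of \cite{mauquantale}; what the paper's approach buys is brevity and consistency with the frame-theoretic viewpoint used throughout Sections \ref{sec3}--\ref{sec5}, in particular setting up the contrast with $\mx(M)$, whose failure of sobriety is then naturally explained by the failure of $\pt(SPm(M))$ to equal $\prt(M)$. One small point worth making explicit in your write-up: the identification of open sets of $\spec(M)$ with the sets $U(K)$ for $K\leq_{fi}M$ is immediate here because the topology is \emph{defined} as the collection $\{\mathcal{U}(N)\mid N\in\Lambda^{fi}(M)\}$, so your closure argument under unions, while true, is not even needed.
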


\begin{proof}
$\spec(M)$ is a topological space with open sets $\{\mathcal{U}(N)\mid N\in\Lambda^{fi}(M)\}$ where $\mathcal{U}(N)=\{P\in \spec(M)\mid N\nleq P\}$. So the hull-kernel adjuntion gives a multiplicative nucleus $\mu$ on $\Lambda^{fi}(M)$ defined as
\[\mu(N)=\bigcap\{P\mid N\leq P\}.\]
Hence by \cite[Proposition 4.24]{mauquantale} we have that the set of fixed points of $\mu$ is
\[SP(M)=\{M\}\cup\{N\mid N\text{ is semiprime in } M\}.\]
By \cite[Proposition 4.29]{mauquantale}  
$Spec(M)=\pt(SP(M))\cong\pt(\mathcal{O}(\spec(M)).$
Hence, by Remark \ref{sobpt} 
\begin{equation*}
\begin{split}
\sob(\spec(M))& =\{\mathcal{U}(P)'\mid P\text{ is prime in }M\} =\{P^{-}\mid P\in \spec(M)\}.
\end{split}
\end{equation*}
Thus, by  \cite[Lemma 1.10]{simmonspoint}, we conclude that  $\spec(M)$ is sober.
\end{proof}

An immediately consequence of the above is:
\begin{cor}\label{specr}
For every ring $R$, the spectrum $\spec(R)$ is a sober space.
\end{cor}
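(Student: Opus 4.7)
The plan is to simply specialize Proposition \ref{specsob} to the case $M = R$ viewed as a left $R$-module over itself. First I would observe that $R$ is free, hence projective, as a left $R$-module, and $\sigma[R] = R\text{-Mod}$, so the hypothesis ``$M$ projective in $\sigma[M]$'' of Proposition \ref{specsob} is automatically satisfied.

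Next I would verify that the module-theoretic $\spec(R)$ from the paper coincides with the classical prime spectrum of the ring $R$. For $M = R$, the endomorphisms of ${}_RR$ are right multiplications, so a submodule $N \leq {}_RR$ is fully invariant precisely when $N$ is closed under right multiplication, i.e., $N$ is a two-sided ideal. Using Definition of the product $N_M K$ applied to $M = R$, one checks that for two-sided ideals $I, J$ the product $I_R J$ coincides with the usual ideal product $IJ$. Consequently the prime submodules of $M = R$ in the sense of the paper are exactly the prime two-sided ideals of $R$, and the topology given by the sets $\mathcal{U}(N) = \{P \in \spec(M) \mid N \not\leq P\}$ (with $N$ fully invariant) is the familiar Zariski topology. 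Therefore $\spec({}_RR)$ agrees with $\spec(R)$ both as a set and as a topological space.

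Given these identifications, Proposition \ref{specsob} applied to $M = R$ yields that $\spec(R)$ is sober, which is the content of the corollary. No further argument is needed.

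The only point requiring any real care is the identification between the module-theoretic and ring-theoretic spectra; everything else is formal. Since the paper has already set up the relevant definitions (in particular the product of submodules and the fact that the fully invariant submodules of ${}_RR$ are the two-sided ideals is implicit), this verification is routine, and there is no substantive obstacle to carrying it out.
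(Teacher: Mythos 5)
Your proposal is correct and matches the paper's argument: the corollary is stated there as an immediate consequence of Proposition \ref{specsob} applied to $M=R$, using exactly the identifications you spell out (that ${}_RR$ is projective in $\sigma[R]=R\text{-Mod}$, that fully invariant submodules of ${}_RR$ are the two-sided ideals, and that $I_RJ$ is the usual ideal product, so the module-theoretic spectrum is the usual prime spectrum with its Zariski topology). Your write-up simply makes explicit the routine verification the paper leaves implicit.
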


\begin{prop}\label{dip}
Let $D$ be a principal ideal domain. Then $\sob(\mx(D))\cong \spec(D)$ as topological spaces. That is, the soberfication of $\mx(D)$ is $\spec(D)$.
\end{prop}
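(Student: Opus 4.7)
My plan is to use the very simple structure of a PID: every nonzero prime ideal of $D$ is maximal, so $\spec(D) = \mx(D) \sqcup \{(0)\}$ as a set, and the zero ideal plays the role of the generic point of $\spec(D)$. The homeomorphism I expect sends each maximal ideal to its own singleton (its closure in $\mx(D)$) and sends $(0)$ to $\mx(D)$ itself, regarded as a closed irreducible subset of $\mx(D)$; this should turn out to be the only closed irreducible subset of $\mx(D)$ that is not a singleton.

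First I would make the topology of $\mx(D)$ explicit. Since $D$ is commutative, $\Lambda^{fi}(D) = \Lambda(D)$, so the open sets are the $m(aD) = \{pD \in \mx(D) \mid p \nmid a\}$ for $a \in D$. Unique factorization implies that for $a \neq 0$ only finitely many primes divide $a$, so $m(aD)$ is cofinite in $\mx(D)$, while $m(0) = \emptyset$. Under the implicit assumption that $\mx(D)$ is infinite (i.e.\ $D$ is neither a field nor a DVR-like PID with finitely many primes, a degenerate case I will flag separately), $\mx(D)$ therefore carries the cofinite topology augmented with $\emptyset$.

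Next I would classify the closed irreducible subsets of $\mx(D)$. Each singleton $\{pD\}$ is closed, since $V(pD) \cap \mx(D) = \{pD\}$ by coprimality of distinct primes, and is trivially irreducible. Any proper closed subset is contained in a finite $V(aD) \cap \mx(D)$, hence is finite; and a finite set of two or more points is reducible, being a union of singleton closed sets. The whole space $\mx(D)$ is irreducible because any two nonempty open sets are cofinite in an infinite set and therefore meet. Consequently
\[ \sob(\mx(D)) = \{\{pD\} \mid pD \in \mx(D)\} \cup \{\mx(D)\}. \]

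Finally I would define $\varphi \colon \sob(\mx(D)) \to \spec(D)$ by $\varphi(\{pD\}) = pD$ and $\varphi(\mx(D)) = (0)$, and verify that it is a homeomorphism by comparing basic opens. For $U = m(aD)$ with $a \neq 0$, one has $\{pD\} \in \natural U$ iff $p \nmid a$, and $\mx(D) \in \natural U$ always, since $U$ is then a nonempty cofinite subset of an infinite set; on the $\spec(D)$ side, $pD \in \mathcal{U}(aD)$ iff $p \nmid a$ and $(0) \in \mathcal{U}(aD)$ iff $a \neq 0$. Hence $\varphi(\natural U) = \mathcal{U}(aD)$, and the case $a = 0$ gives $\emptyset$ on both sides. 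The only real obstacle is the boundary behaviour when $\mx(D)$ is finite: then $\mx(D)$ is discrete and reducible as soon as it has at least two points, so it is no longer a point of the sobrification and $\varphi$ cannot be a bijection. I would therefore interpret the proposition as referring to PIDs with infinitely many primes (e.g.\ $\mathbb{Z}$ or $k[x]$) and record the degenerate cases as a remark.
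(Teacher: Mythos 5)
Your proof is correct, and it takes a genuinely different route from the paper's. The paper stays inside the frame-theoretic machinery of the earlier sections: it notes $\prt(D)=\mx(D)$, computes $\pt(SPm(D))=\{0\}\cup\mx(D)$, transports this through the isomorphism $SPm(D)\cong\mathcal{O}(\mx(D))$ and Remark \ref{sobpt} to read off $\sob(\mx(D))=\{\mx(D)\}\cup\{\{\mathcal{M}\}\mid\mathcal{M}\in\mx(D)\}$, and then invokes the universal property of the soberfication to produce the comparison map to $\spec(D)$. You instead argue in elementary point-set terms: you identify the topology of $\mx(D)$ as the cofinite topology, classify the closed irreducible subsets by hand (the singletons, plus the whole space when it is infinite), and check the homeomorphism directly on basic opens. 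The two computations of $\sob(\mx(D))$ agree; yours is self-contained, while the paper's is shorter given the earlier results and illustrates the general machinery. One point where your analysis is actually sharper: the caveat about PIDs with only finitely many maximal ideals is genuine. For a semilocal PID with $n\geq 1$ nonzero primes, $\mx(D)$ is a finite discrete (or one-point) space, so $\sob(\mx(D))$ has $n$ points while $\spec(D)$ has $n+1$, and the statement fails; the paper's computation silently assumes $0\in SPm(D)$, i.e.\ that the Jacobson radical vanishes, which for a PID with at least one nonzero prime is exactly the condition that there are infinitely many of them. Your decision to read the proposition as restricted to that case (covering $\mathbb{Z}$ and $k[x]$) and to record the degenerate cases separately is the right one.
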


\begin{proof}
Inasmuch as  $D$ is a PID, every non zero prime ideal is maximal and the primitive ideals are the maximal ideals, so
$\prt(D)=\mx(D).$

 Since $D$ is a domain, 
$\pt(SPm(D))=\{0\}\cup\{\mathcal{M}<D\mid\mathcal{M}\text{ is maximal }\}.$
Under the isomorphism $SPm(D)\cong\mathcal{O}(\mx(D))$ we have that 
\[\pt(\mathcal{O}(\mx(D)))=\{\emptyset\}\cup\{\mathcal{U}(\mathcal{M})\mid\mathcal{M}\text{ is maximal }\}.\]

\noindent Thus,  using the fact that  if $\mathcal{M}$ is maximal then it is closed, we have that 
\begin{equation*}
\begin{split}
sob(\mx(D))=\{\mx(D)\}\cup\{\mathcal{M}^{-}\mid\mathcal{M}\text{ is maximal}\}\\=\{\mx(D)\}\cup\{\{\mathcal{M}\}\mid\mathcal{M}\text{ is maximal}\}.
\end{split}
\end{equation*}

Notice that if $U\in\mathcal{O}(\mx(D))$ then 
\[\natural{U}=\{X\in \sob(\mx(D))\mid X\cap U\neq\emptyset\}=\]\[\{\mx(D)\}\cup\{\{\mathcal{M}\}\in\sob(\mx(D))\mid \mathcal{M}\in U\}.\]
Because of the open sets of $\mx(D)$ are $\mathcal{U}(I)=\{\mathcal{M}\mid I\nsubseteq\mathcal{M}\}$ for some $I\leq D,$ it follows that  
$\natural(\mathcal{U}(I))=\{\mx(D)\}\cup\mathcal{U}(I).$

Applying the universal property of $\sob(\mx(D))$ (\cite[Theorem 3.9]{simmonspoint}), we get the following commutative diagram
\[\xymatrix{\mx(D)\ar@{^{(}-{>}}[r] \ar[d]_\zeta & \spec(D) \\ \sob(\mx(D))\ar@{--{>}}[ru]_{\exists !\varphi^{\#}} }\]

\noindent where $\varphi^{\#}(\mathcal{M})=\mathcal{M}$ and $\varphi^{\#}(\mx(D))=0$. Recall that a non trivial open set in $\spec(D)$ has the form:
\[\{P\in \spec(D)\mid I\nleq P\}=\{0\}\cup\{\mathcal{M}\in \mx(D)\mid I\nleq \mathcal{M}\}.\]
\end{proof}

From Proposition \ref{dip}, we can see that the space $\mx(M)$ is not sober in general. Next Theorem gives necessary and sufficient conditions for $\mx(M)$ to be sober.

\begin{lem}\label{tauprim}
Let $M$ be projective in $\sm$. Then $\mathcal{M}^-=m(\annm(M/\mathcal{M}))'$ for any $\mathcal{M}\in\mx(M)$.
\end{lem}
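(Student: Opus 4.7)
The plan is to identify the closure $\mathcal{M}^-$ with the closed set $V(\ann_M(M/\mathcal{M})) := m(\ann_M(M/\mathcal{M}))'$, i.e.\ the set of all maximal submodules containing $\ann_M(M/\mathcal{M})$. Since every closed set in $\mx(M)$ has the form $V(N) = m(N)'$ for some $N \leq_{fi} M$, it is enough to show that $V(\ann_M(M/\mathcal{M}))$ is the smallest closed set containing $\mathcal{M}$.

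First I would check containment in one direction: the set $V(\ann_M(M/\mathcal{M}))$ is closed and contains $\mathcal{M}$, since $\ann_M(M/\mathcal{M}) \leq \mathcal{M}$ by definition of the annihilator. Hence $\mathcal{M}^- \subseteq V(\ann_M(M/\mathcal{M}))$.

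For the reverse inclusion, I would take an arbitrary closed set $V(N)$ (with $N \leq_{fi} M$) containing $\mathcal{M}$, so $N \leq \mathcal{M}$, and show $V(\ann_M(M/\mathcal{M})) \subseteq V(N)$. The key step is to prove $N \leq \ann_M(M/\mathcal{M})$, which reduces to showing $N_M(M/\mathcal{M}) = 0$. This is the one place the projectivity of $M$ in $\sigma[M]$ enters: for any $f \in \Hom(M, M/\mathcal{M})$ there is a lift $g \in \End(M)$ with $\pi g = f$ (where $\pi\colon M \to M/\mathcal{M}$ is the canonical projection), and since $N$ is fully invariant we have $g(N) \subseteq N \leq \mathcal{M}$, so $f(N) = \pi g(N) = 0$. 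Taking the sum over all such $f$ gives $N_M(M/\mathcal{M}) = 0$, hence $N \leq \ann_M(M/\mathcal{M})$, exactly as in the argument used in Lemma \ref{maxprm} (and citable via \cite[Prop.~1.3 and Prop.~1.8]{PepeGab}). Consequently every $\mathcal{M}' \in V(\ann_M(M/\mathcal{M}))$ satisfies $N \leq \ann_M(M/\mathcal{M}) \leq \mathcal{M}'$, giving $\mathcal{M}' \in V(N)$.

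Combining the two inclusions, $V(\ann_M(M/\mathcal{M}))$ is contained in every closed set containing $\mathcal{M}$ and is itself such a closed set, hence coincides with $\mathcal{M}^-$, which is the desired equality $\mathcal{M}^- = m(\ann_M(M/\mathcal{M}))'$. The only substantive step is the lifting argument that uses $M$ projective in $\sigma[M]$ to push $N$ inside $\ann_M(M/\mathcal{M})$; everything else is a direct unravelling of the definitions of closure and of the hull-kernel open sets $m(N)$.
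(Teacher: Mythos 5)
Your proposal is correct and follows essentially the same route as the paper's proof: both directions are established by noting $\annm(M/\mathcal{M})\leq\mathcal{M}$ for one inclusion, and by using projectivity of $M$ in $\sigma[M]$ together with full invariance of $N$ to get $N_M(M/\mathcal{M})=0$, hence $N\leq\annm(M/\mathcal{M})$, for the other. The only difference is that you spell out the lifting argument that the paper compresses into a citation.
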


\begin{proof}
It is clear $\mathcal{M}\in m(\annm(M/\mathcal{M}))'$. On the other hand, let $m(K)'$ a closed set in $\mx(M)$ containing $\mathcal{M}$.Then $K\subseteq \mathcal{M}$. Since $M$ is projective in $\sm$, $K_MM/\mathcal{M}=0$. Hence $K\subseteq\annm(M/\mathcal{M})$. Thus $m(\annm(M/\mathcal{M}))'\subseteq m(K)'$.
\end{proof}

\begin{dfn}
Let $M$ be an $R$-module. It is said $M$ is quasi-duo if $\mx(M)=\mx^{fi}(M)$. A ring $R$ is called left quasi-duo if as left $R$-module is quasi-duo. 
\end{dfn}

Examples of left quasi-duo rings are the upper triangular matrices of $n$ by $n$ with coefficients in a field (See \cite[Proposition 2.1]{yuquasi}).   

\begin{lem}\label{qdmax}
Let $M$ be projective in $\sm$ such that $\mx(M)\neq\emptyset$. The following conditions are equivalent:
\begin{enumerate}[\rm(1)]
\item $M$ is a quasi-duo module.
\item $\mx(M)$ is $T_1$
\item $\mx(M)$ is $T_0$
\end{enumerate}
\end{lem}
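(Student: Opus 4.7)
The plan is to close a cycle $(1)\Rightarrow (2)\Rightarrow (3)\Rightarrow (1)$. The implication $(2)\Rightarrow(3)$ is the standard and immediate fact that any $T_1$ space is $T_0$. The other two implications rest entirely on Lemma \ref{tauprim}, which identifies $\mathcal{M}^-$ with $m(\annm(M/\mathcal{M}))'=\{\mathcal{M}'\in \mx(M)\mid \annm(M/\mathcal{M})\leq \mathcal{M}'\}$, together with a control on when two maximal submodules determine the same annihilator of a simple quotient.

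For $(1)\Rightarrow (2)$, I would argue: if $\mathcal{M}\in \mx(M)=\mx^{fi}(M)$, then $\mathcal{M}$ is fully invariant. Since $M$ is projective in $\sigma[M]$, a fully invariant submodule satisfies $\mathcal{M}_M(M/\mathcal{M})=0$ (as used in Lemma \ref{maxprm}, via \cite[Propositions 1.3, 1.8]{PepeGab}), and therefore $\mathcal{M}\leq \annm(M/\mathcal{M})$. The canonical projection already yields $\annm(M/\mathcal{M})\leq \mathcal{M}$, so $\annm(M/\mathcal{M})=\mathcal{M}$. By Lemma \ref{tauprim}, $\mathcal{M}^-=\{\mathcal{M}'\in \mx(M)\mid \mathcal{M}\leq \mathcal{M}'\}=\{\mathcal{M}\}$, using maximality. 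Every singleton is closed, so $\mx(M)$ is $T_1$.

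For $(3)\Rightarrow (1)$, I would use the contrapositive. Suppose $\mathcal{M}\in \mx(M)$ is not fully invariant, and pick $f\in \End_R(M)$ with $f(\mathcal{M})\not\subseteq \mathcal{M}$. Let $\pi\colon M\to M/\mathcal{M}$ be the canonical projection and set $g=\pi\circ f$. Since $f(\mathcal{M})\not\subseteq \mathcal{M}$, there is $x\in \mathcal{M}$ with $g(x)\neq 0$, so $\mathcal{M}\not\subseteq \ker(g)$ and in particular $\ker(g)\neq \mathcal{M}$. Moreover $g$ is nonzero and has simple codomain, so $g$ is surjective and $M/\ker(g)\cong M/\mathcal{M}$ with $\ker(g)\in \mx(M)$. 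Since $\annm(M/-)$ depends only on the isomorphism class of the quotient, $\annm(M/\ker(g))=\annm(M/\mathcal{M})$, and Lemma \ref{tauprim} then gives $\ker(g)^-=\mathcal{M}^-$ while $\ker(g)\neq \mathcal{M}$. This violates $T_0$, since in a $T_0$ space the map $x\mapsto \overline{\{x\}}$ is injective. Hence $T_0$ forces $\mx(M)=\mx^{fi}(M)$.

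The only delicate step is $(3)\Rightarrow (1)$, and the key idea there is simply to compose a non-normalizing endomorphism with the canonical projection to manufacture a second maximal submodule with isomorphic simple quotient, hence with the same annihilator, hence with the same topological closure. The other implications follow at once from Lemma \ref{tauprim}.
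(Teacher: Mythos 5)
Your proof is correct and follows essentially the same route as the paper's: both implications $(1)\Rightarrow(2)$ and $(3)\Rightarrow(1)$ hinge on Lemma \ref{tauprim} identifying $\mathcal{M}^-$ with the closed set determined by $\annm(M/\mathcal{M})$, and your contrapositive for $(3)\Rightarrow(1)$ (composing a non-normalizing endomorphism with the projection to get a distinct maximal submodule with the same closure) is just the paper's direct argument that $T_0$ forces $\ker(f)=\mathcal{M}$ for every nonzero $f\colon M\to M/\mathcal{M}$, read backwards.
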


\begin{proof}
(\textit{1})$\Rightarrow$(\textit{2}) Let $\mathcal{M}\in\mx(M)$. Since $\mathcal{M}$ is fully invariant, $\mathcal{M}=m(\mathcal{M})'$.

(\textit{2})$\Rightarrow$(\textit{3}) It is clear.

(\textit{3})$\Rightarrow$(\textit{1}) Let $\mathcal{M}\in\mx(M)$. Let $0\neq f:M\to M/\mathcal{M}$. Since $M/\ker(f)\cong M/\mathcal{M}$ then 
$\mathcal{M}^-=\textit{m}(\annm(M/\mathcal{M}))'=m(\annm(M/\ker (f)))'=ker(f)^-.$
But $\mx(M)$ is $T_0$, thus $\ker(f)=\mathcal{M}$. This implies that $\annm(M/\mathcal{M})=\mathcal{M}$.
\end{proof}

\begin{thm}
Let $M$ be projective in $\sm$. The following conditions hold: 
\begin{enumerate}[\rm(1)]
\item If $\mx(M)$ is sober then $\pt(SPm(M))=\prt(M)$.
\item If $M$ is a quasi-duo module and $\pt(SPm(M))=\prt(M)$ then $\mx(M)$ is sober.
\end{enumerate}
\end{thm}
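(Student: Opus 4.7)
The overall strategy is to exploit the frame isomorphism $m\colon SPm(M)\xrightarrow{\cong}\mathcal{O}(\mx(M))$ from Theorem \ref{rm}, which induces a bijection on points $\pt(SPm(M))\cong\pt(\mathcal{O}(\mx(M)))$, and then use the homeomorphism of Remark \ref{sobpt}, $U\mapsto U'$, to identify $\pt(\mathcal{O}(\mx(M)))$ with $\sob(\mx(M))$. In short, points of $SPm(M)$ correspond to closed irreducible subsets of $\mx(M)$ via $Q\leftrightarrow m(Q)'$. The two other key inputs are Lemma \ref{tauprim}, which identifies $\mathcal{M}^-=m(\annm(M/\mathcal{M}))'$, and Lemma \ref{qdmax}, which gives $\mx(M)$ $T_0$ exactly when $M$ is quasi-duo.

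For (1), I would start with $Q\in\pt(SPm(M))$; then $m(Q)'$ is closed irreducible in $\mx(M)$, so by the soberness hypothesis there is a unique $\mathcal{M}\in\mx(M)$ with $m(Q)'=\mathcal{M}^-$. Applying Lemma \ref{tauprim} on the right hand side yields $m(Q)'=m(\annm(M/\mathcal{M}))'$, hence $m(Q)=m(\annm(M/\mathcal{M}))$. Since $m$ is injective (it is an iso of frames) and $\annm(M/\mathcal{M})\in SPm(M)$, I conclude $Q=\annm(M/\mathcal{M})\in\prt(M)$. The reverse inclusion $\prt(M)\subseteq\pt(SPm(M))$ is Proposition \ref{prmprim}.

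For (2), let $C\subseteq\mx(M)$ be closed irreducible; write $C=m(N)'$ with a unique $N\in SPm(M)$ (and $N\neq M$, since $C\neq\emptyset$). The first task is to show $N\in\pt(SPm(M))$: given $N_1\wedge N_2\leq N$ with $N_1,N_2\in SPm(M)$ and assuming $N_i\not\leq N$, each $m(N_i)\cap C$ is nonempty, so irreducibility of $C$ forces $m(N_1)\cap m(N_2)\cap C\neq\emptyset$; since $m$ preserves finite meets this contradicts $N_1\wedge N_2\leq N$. By the hypothesis $\pt(SPm(M))=\prt(M)$, there is a simple $S\in\sigma[M]$ with $N=\annm(S)$. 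Since $N\neq M$, $\Hom(M,S)\neq 0$, and any nonzero map $M\to S$ is surjective because $S$ is simple; so $S\cong M/\mathcal{M}$ for some $\mathcal{M}\in\mx(M)$, whence $N=\annm(M/\mathcal{M})$. Lemma \ref{tauprim} then gives $\mathcal{M}^-=m(N)'=C$, exhibiting the generic point. Uniqueness of the generic point is automatic from Lemma \ref{qdmax}, since $\mx(M)$ is $T_0$ under the quasi-duo assumption.

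The main obstacle I foresee is the verification that the distinguished $N\in SPm(M)$ associated to a closed irreducible $C$ is actually $\wedge$-irreducible in $SPm(M)$; this hinges on using the right frame structure (meets in $SPm(M)$ are honest intersections of submodules and they correspond, via $m$, to intersections of open sets in $\mx(M)$), and on being careful that ``nonempty intersection with $C$'' translates to ``not below $N$''. The second delicate point is the passage from a primitive submodule $N=\annm(S)$ to a maximal submodule $\mathcal{M}$ with $S\cong M/\mathcal{M}$, which rests on the mild but essential observation that $N\neq M$ forces a nonzero (hence surjective) morphism $M\to S$; once this is in place, Lemma \ref{tauprim} delivers the generic point immediately.
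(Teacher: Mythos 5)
Your proof is correct and follows essentially the same route as the paper: both rely on the isomorphism $SPm(M)\cong\mathcal{O}(\mx(M))$ together with Remark \ref{sobpt} to identify points of $SPm(M)$ with closed irreducible subsets of $\mx(M)$, then use Lemma \ref{tauprim} for (1) and Lemma \ref{qdmax} plus the passage $\annm(S)=\annm(M/\ker f)$ for (2). The only difference is that you verify by hand that the submodule attached to a closed irreducible set is $\wedge$-irreducible, where the paper simply cites the homeomorphism $\pt(\mathcal{O}(S))\cong\sob(S)$.
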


\begin{proof}
Firstly, by Remark \ref{sobpt} it follows  that
\[\sob(\mx(M))=\{\textit{m}(K)'\mid K\in\pt(SPm(M))\}.\]

(\textit{1})  Assume that $\mx(M)$ is sober and let $\textit{m}(K)'\in \sob(\mx(M))$. Then  by Lemma \ref{tauprim},
$\textit{m}(K)'=\mathcal{M}^{-}=\textit{m}(\annm(M/\mathcal{M}))'$
for some $\mathcal{M}\in\mx(M)$. Therefore
$\textit{m}(K)=\textit{m}(\annm(M/\mathcal{M})).$

Since $m:SPm(M)\to \mathcal{O}(\mx(M))$ is a bijection 
it follows that  $K=\annm(M/\mathcal{M}),$ that is  $K$ is a primitive submodule.

(\textit{2}) By Lemma \ref{qdmax} $\mx(M)$ is $T_0$, hence it is enough to show that every closed irreducible is a point closure. Assume that  $K=\annm(S)$ with $S$ a simple module in $\sm$. Let $0\neq f:M\to S$. Then $\ker(f)\in\mx(M)$ and $K=\annm(M/\ker(f))$. Thus $m(K)'=\ker(f)^-$.
\end{proof}

\begin{cor}
If $R$ is a left quasi-duo ring such that $R$-Simp is finite then $\mx(R)$ is sober. In particular, $\mx(R)$ is sober for every commutative artinian ring $R$.
\end{cor}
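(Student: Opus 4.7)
The plan is to deduce the corollary directly by chaining the immediately preceding theorem with Proposition \ref{simpf}, applied to the module $M = {}_RR$. Since $R$ is always projective in $\sigma[R] = R\text{-Mod}$, both results apply without any extra hypotheses beyond what the corollary assumes.

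First I would note that the hypothesis that $R$-Simp is finite is exactly what is needed to invoke Proposition \ref{simpf}, which immediately yields the equality $\pt(SPm(R)) = \prt(R)$. Next, the hypothesis that $R$ is left quasi-duo is, by definition, exactly the statement that ${}_RR$ is a quasi-duo module. Combining these two facts, part (2) of the preceding theorem applies verbatim with $M = R$ and delivers that $\mx(R)$ is sober.

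For the ``in particular'' clause about commutative artinian rings, the argument is routine: a commutative ring is automatically left quasi-duo, because for ${}_RR$ the endomorphism ring is identified with $R$ acting by right multiplication, so every (left) ideal of a commutative ring is fully invariant; hence $\mx(R) = \mx^{fi}(R)$ trivially. Moreover, an artinian ring has only finitely many maximal ideals, and since every simple left $R$-module is of the form $R/\mathcal{M}$ with $\mathcal{M} \in \mx(R)$, this forces $R$-Simp to be finite. Thus the hypotheses of the first statement are met, and sobriety of $\mx(R)$ follows.

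There is no substantial obstacle here: the work has been done in the preceding theorem and in Proposition \ref{simpf}, and the corollary is essentially a packaging of those results for the case $M = R$. The only small verification is the claim that commutativity implies the quasi-duo condition, which as noted above is immediate from the description of $\End({}_RR) \cong R^{\mathrm{op}} = R$ for commutative $R$.
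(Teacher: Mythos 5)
Your argument is exactly the paper's: the proof there reads ``It follows by Proposition \ref{simpf},'' meaning precisely the chain you describe (Proposition \ref{simpf} gives $\pt(SPm(R))=\prt(R)$, and part (2) of the preceding theorem, applied to the quasi-duo module ${}_RR$, gives sobriety), with the ``in particular'' clause handled by the same routine observations about commutative artinian rings. Your write-up is correct and merely more explicit than the paper's one-line justification.
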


\begin{proof}
It follows by Proposition \ref{simpf}.
\end{proof}

\begin{cor}
If $R$ is a pm-ring then $\mx(R)$ is a sober space.
\end{cor}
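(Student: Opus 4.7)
The plan is to derive the corollary from part~(2) of the preceding theorem applied to $M=R$, viewing $R$ as a left module over itself. Note that $\sigma[R] = R\text{-Mod}$ and $R$ is projective in this category, so the hypotheses of the theorem are available. The two conditions to be checked are (a) $\pt(SPm(R)) = \prt(R)$, and (b) $R$ is a quasi-duo module.

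For (a) I would invoke Proposition \ref{pmpt}, which gives the stronger conclusion $\pt(SPm(R)) = \prt(R) = \mx(R)$ whenever $R$ is a pm-module. The translation from pm-ring to pm-module is the natural one: prime submodules of $R$ (as a left module) are exactly the prime two-sided ideals, since being prime in the sense of the paper requires full invariance, and fully invariant submodules of $R$ coincide with two-sided ideals; the pm-ring hypothesis therefore matches the pm-module condition in Definition \ref{pmmodule}.

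For (b) the key observation is that once $\mx(R) = \prt(R)$ is known, quasi-duo follows for free. Indeed, every primitive submodule is an annihilator, hence fully invariant, so each $\mathcal{M} \in \mx(R) = \prt(R)$ lies in $\mx^{fi}(R)$ (maximality as a submodule forces maximality among fully invariants, since a strictly larger fully invariant submodule would no longer be proper). For the reverse inclusion, observe that $R$ is coatomic by Zorn's lemma and projective in $\sigma[R]$, so Lemma \ref{maxprm} applies: every $\mathcal{N} \in \mx^{fi}(R)$ is primitive, hence an element of $\prt(R) = \mx(R)$. This establishes $\mx(R) = \mx^{fi}(R)$, i.e., $R$ is quasi-duo. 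Having verified both hypotheses, part~(2) of the preceding theorem yields that $\mx(R)$ is sober.

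The only subtle point is the translation between the ring-theoretic pm-condition and the module-theoretic one; this is transparent in the commutative setting (the standard context of pm-rings, following \cite{sunrings}), where every left ideal is two-sided and maximal left ideals coincide with maximal ideals, and is what makes the reduction to Proposition \ref{pmpt} direct.
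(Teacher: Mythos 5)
Your proof is correct and follows the same route as the paper, whose entire proof of this corollary is the citation of Proposition~\ref{pmpt}. You additionally supply the quasi-duo verification (via Lemma~\ref{maxprm}, coatomicity of $R$, and the fact that primitive submodules are annihilators and hence fully invariant) that is needed to invoke part~(2) of the preceding theorem and that the paper's one-line proof leaves implicit; your caveat about translating the pm-ring condition (on two-sided ideals) into the pm-module condition (on maximal left ideals) in the noncommutative case flags a genuine point that the paper itself does not address.
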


\begin{proof}
If follows by Proposition \ref{pmpt}.
\end{proof}

\section{The frame $\Psi(M)$}\label{sec5}

In this section, we introduce  $\Psi(M)$  which is a frame given by condition on annihilators. In fact, $\Psi(M)$ turns out to be a spatial frame. Firstly, we give an explicit definition of this frame and after that we get it as  the set of fixed point of  an operator in $\Lambda^{fi}(M)$. Also, we research about the case when  $\Psi(M)=\Lambda^{fi}(M)$ and we characterize the modules with this property.

Also on the definition of  $\Psi(M)$ arise the regular core for  an idiomatic-quantale. So, taking this in account, we give sufficient condition to get that the regular core of $\Lambda^{fi}(M)$ concides with  $\Psi(M).$

\begin{dfn}\label{psiM}
For a module $M$, set
 \[\Psi(M):=\{N\in\Lambda^{fi}(M)\mid \forall n\in N, [N+\annm(Rn)=M]\}.\] 
\end{dfn}

\begin{dfn}
We say that a module $M$ is self-progenerator in $\sm$ if $M$ is projective in $\sm$ and generates all its submodules.
\end{dfn}

\begin{prop}\label{psi1}
Let $M$ be self-progenerator in $\sigma[M]$. Then for every $N\in\Psi(M)$ we have that $N^{2}=N.$
\end{prop}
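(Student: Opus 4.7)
The goal is to show both inclusions $N_M N \leq N$ and $N \leq N_M N$. The first is immediate from $N$ being fully invariant in $M$: every $f\in\Hom(M,N)$ composed with the inclusion $N\hookrightarrow M$ yields an endomorphism of $M$ that preserves $N$, so $f(N)\subseteq N$, whence $N_M N=\sum\{f(N)\mid f\in\Hom(M,N)\}\leq N$.

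For the reverse inclusion I would pick an arbitrary $n\in N$ and aim to exhibit $n$ as a finite sum of elements of the form $f_i(n_i)$ with $f_i\in\Hom(M,N)$ and $n_i\in N$. The self-progenerator hypothesis on $M$ guarantees that $M$ generates the cyclic submodule $Rn\leq N$, so there exist morphisms $f_1,\ldots,f_k\colon M\to Rn$ and elements $m_1,\ldots,m_k\in M$ with $n=\sum_{i=1}^{k} f_i(m_i)$.

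Now I invoke the defining property of $\Psi(M)$: since $n\in N$ we have $N+\annm(Rn)=M$, so each $m_i$ decomposes as $m_i=n_i+a_i$ with $n_i\in N$ and $a_i\in\annm(Rn)$. By the very definition of $\annm(Rn)=\bigcap\{\ker(g)\mid g\in\Hom(M,Rn)\}$, every $f_i$ kills $a_i$. Therefore $f_i(m_i)=f_i(n_i)$ and, composing each $f_i$ with the inclusion $Rn\hookrightarrow N$, we may regard $f_i$ as an element of $\Hom(M,N)$; then $f_i(n_i)\in f_i(N)\subseteq N_M N$. Summing, $n\in N_M N$, which finishes the proof.

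The argument is essentially bookkeeping once the two hypotheses are brought to bear; the only subtle point is recognizing that the self-progenerator condition is precisely what allows one to express an element of $N$ through morphisms landing in an arbitrarily small cyclic piece $Rn$, and that the condition $N+\annm(Rn)=M$ is tailored exactly to convert the ambient elements $m_i\in M$ into elements of $N$ without altering their images under maps into $Rn$. I do not foresee any real obstacle beyond writing these identifications cleanly.
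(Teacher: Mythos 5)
Your proof is correct and follows essentially the same route as the paper: the paper computes $Rn=M_{M}Rn=[N+\annm(Rn)]_{M}Rn=N_{M}Rn$ at the level of the lattice product and then sums over $n\in N$, which is exactly your element-wise decomposition $n=\sum f_i(m_i)$ with $m_i=n_i+a_i$ and $f_i(a_i)=0$. The only cosmetic difference is that working with elements lets you avoid explicitly invoking the distributivity of the product over the sum $\sum_{n\in N}Rn$, which the paper cites as a separate lemma.
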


\begin{proof}
First, for each  $n\in N$ we have that \vspace{-4pt}
\[Rn{=}M_{M}Rn=[N+\annm(Rn)]_MRn=
N_{M}Rn+\annm(Rn)_MRn=N_{M}Rn.\] 
Then, by \cite[Lemma 2.1]{maustructure},

\centerline{$N^{2}=N_{M}N=N_{M}\sum Rn=\sum(N_{M}Rn)=\sum Rn=N.$}
\end{proof}

\begin{prop}\label{psi2}
Let $M$ be self-progenerator in $\sm$. If $N,L\in \Psi(M)$ then $N\cap L\in \Psi(M)$. Moreover, if $N\in \Psi(M)$ and $K\in\Lambda^{fi}(M)$ then $K\cap N=N_MK$.
\end{prop}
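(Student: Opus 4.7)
The plan is to prove the ``moreover'' clause (the equality $K\cap N = N_MK$) first, and then use it to establish closure under intersection. Both the main statement and the equality follow directly once the right preparatory computations from Proposition \ref{psi1} are reused, so the main task is to set them up in the correct order.

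For the equality $K\cap N = N_MK$ with $N\in\Psi(M)$ and $K\in\Lambda^{fi}(M)$, the inclusion $N_MK\subseteq N\cap K$ is routine: $N_MK\subseteq K$ by definition, and $N_MK\subseteq N$ because any $f\colon M\to K$, composed with the inclusion $K\hookrightarrow M$, is an endomorphism of $M$ and therefore sends $N$ into $N$ by full invariance. For the reverse inclusion I would take $x\in N\cap K$ and repeat the computation from Proposition \ref{psi1}: since $x\in N\in\Psi(M)$ one has $M = N + \annm(Rx)$, and since $M$ is a self-progenerator $M_MRx = Rx$. Distributivity of the product over sums (valid because $M$ is projective in $\sm$) gives
\[Rx = (N+\annm(Rx))_MRx = N_MRx + \annm(Rx)_MRx = N_MRx,\]
using that $\annm(Rx)_MRx = 0$. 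Finally, since $Rx\subseteq K$, any $f\colon M\to Rx$ composes with the inclusion $Rx\hookrightarrow K$ to yield a morphism into $K$, so $N_MRx\subseteq N_MK$; in particular $x\in N_MK$. Note that $K$ is not required to be fully invariant at this step.

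For closure of $\Psi(M)$ under intersection, let $N,L\in\Psi(M)$ and fix $x\in N\cap L$. The submodule $N\cap L$ is clearly fully invariant. Apply the operation $N_M(-)$ to the identity $M = L + \annm(Rx)$ (which holds because $x\in L\in\Psi(M)$); using again that $M$ is a self-progenerator and the distributivity of the product, I get
\[N = N_MM = N_ML + N_M\annm(Rx).\]
The first summand equals $N\cap L$ by the equality already established. For the second summand, the preliminaries tell us that $\annm(Rx)$ is fully invariant in $M$, so the same full-invariance argument as above gives $N_M\annm(Rx)\subseteq \annm(Rx)$. Hence $N\subseteq (N\cap L) + \annm(Rx)$, and combining this with $M = N + \annm(Rx)$ (valid because $x\in N\in\Psi(M)$) yields $M = (N\cap L) + \annm(Rx)$, as required.

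The main obstacle, as far as there is one, is simply bookkeeping: one must keep straight which hypothesis justifies each step, namely self-progeneration (for $M_MRx = Rx$), projectivity in $\sm$ (for distributivity of $-_M-$ over sums), and full invariance of $N$ and of $\annm(Rx)$ (to absorb $N_M(-)$ back inside these submodules). Once these are lined up and the ``moreover'' clause is proved first, the closure under intersection is a one-line consequence.
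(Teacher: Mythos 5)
Your proof is correct and uses essentially the same ingredients as the paper's: the computation $Rx=N_MRx$ from Proposition \ref{psi1} for the ``moreover'' clause, and distributivity of the product over the identities $N+\annm(Rx)=M=L+\annm(Rx)$ for closure under intersection. The only real difference is cosmetic --- the paper expands $M=M_MM=[N+\annm(Rx)]_M[L+\annm(Rx)]$ directly, whereas you prove the ``moreover'' clause first and then apply $N_M(-)$ to one identity and substitute into the other; both reduce to the same estimates $N_ML\leq N\cap L$ and $N_M\annm(Rx)\leq\annm(Rx)$.
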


\begin{proof}
Let $N, L\in\Psi(M)$ and $n\in N\cap L$, then 
\begin{equation*}
\begin{split}
M=M_{M}M & =[N+\annm(Rn)]_{M}[L+\annm(Rn)]\\
& N_{M}L+N_{M}\annm(Rn)+\annm(Rn)^{2}+\annm(Rn)_ML\\
& \leq N_{M}L+\annm(Rn)\leq N\cap L+\annm(Rn).
\end{split}
\end{equation*}
Therefore $N\cap L\in\Psi(M)$.

Now let $N\in\Psi(M)$ and $K\in\Lambda^{fi}(M)$. By Proposition \ref{psi1}, if $n\in K\cap N$ then $Rn=N_MRn\leq N_MK\leq N\cap K$, thus $N\cap K=N_MK$.
\end{proof}

\begin{prop}\label{psi3}
Let $M$ be projective in $\sm$. If $\{N_{\alpha}\mid\alpha\in\EuScript{I}\}\subseteq\Psi(M)$ then $\sum\{N_{\alpha}\mid\alpha\in\EuScript{I}\}\in\Psi(M)$
\end{prop}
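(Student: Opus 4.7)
The plan is to reduce the arbitrary sum to a binary sum, handle the binary sum by algebraic manipulation of the product, and use Lemma \ref{anninter} to control the annihilator.

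First, I would note that $N:=\sum_{\alpha\in\EuScript{I}} N_\alpha$ is fully invariant, since sums of fully invariant submodules are fully invariant. Now, let $n\in N$. Then $n$ lies in some finite subsum $N_{\alpha_1}+\cdots+N_{\alpha_k}$, so we may write $n=n_1+\cdots+n_k$ with $n_i\in N_{\alpha_i}$. Since enlarging the left summand preserves the property $(-)+\annm(Rn)=M$, it suffices to show $N_{\alpha_1}+\cdots+N_{\alpha_k}+\annm(Rn)=M$. By induction on $k$, this reduces to the binary case.

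For the binary case (say $N_1,N_2\in\Psi(M)$ and $n=n_1+n_2$ with $n_i\in N_i$), I would compute, using that $M$ is projective in $\sigma[M]$ so the product is distributive over sums (\cite[Lemma 1.1]{maustructure} or \cite[Proposition 5.6]{beachy2002m}):
\begin{align*}
M = M_M M &= \bigl(N_1+\annm(Rn_1)\bigr)_M\bigl(N_2+\annm(Rn_2)\bigr) \\
&= N_1{}_MN_2 + N_1{}_M\annm(Rn_2) + \annm(Rn_1)_MN_2 + \annm(Rn_1)_M\annm(Rn_2).
\end{align*}
The key observation is that each of the four summands lands either in $N_1+N_2$ or in $\annm(Rn)$. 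The first three lie in $N_1+N_2$ because $N_1$ and $N_2$ are fully invariant (so $N_i{}_MK\leq N_i$ and $K_MN_i\leq N_i$ for any $K$). For the fourth, note $\annm(Rn_i)$ is fully invariant, hence $\annm(Rn_1)_M\annm(Rn_2)\leq \annm(Rn_1)\cap\annm(Rn_2)$, and by Lemma \ref{anninter} this equals $\annm(Rn_1+Rn_2)$, which is contained in $\annm(Rn)$ because $Rn\subseteq Rn_1+Rn_2$. Combining, $M\leq N_1+N_2+\annm(Rn)$, giving equality.

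The main (modest) obstacle is the bookkeeping of which piece of the expansion of $M_MM$ lands where; once one sees that fully invariance absorbs $N_1{}_M(-)$ and $(-)_MN_2$ into $N_1+N_2$, and that Lemma \ref{anninter} turns the intersection of annihilators into $\annm(Rn_1+Rn_2)\leq\annm(Rn)$, the conclusion is immediate. No self-progenerator hypothesis is needed here, only projectivity of $M$ in $\sigma[M]$ (for distributivity of the product and for Lemma \ref{anninter}).
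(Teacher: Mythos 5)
Your proof is correct and takes essentially the same route as the paper's: expand a product of copies of $M$ using the hypothesis $N_{\alpha_i}+\annm(Rn_{\alpha_i})=M$, absorb every cross term into the sum via full invariance, and push the remaining product of annihilators into $\annm(Rn)$ via Lemma \ref{anninter} and $\annm(Rn_{\alpha_1}+\cdots+Rn_{\alpha_k})\leq\annm(Rn)$. The only organizational difference is that the paper computes the $k$-fold product $(N+\annm(Rn_{\alpha_1}))_M\cdots{}_M(N+\annm(Rn_{\alpha_k}))$ in one step, whereas you establish the binary case and induct.
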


\begin{proof}
Let $N=\sum_\EuScript{I} N_\alpha$ and $x\in N$. Then $x=n_{\alpha_1}+\cdots n_{\alpha_k}$ with $n_{\alpha_i}\in N_{\alpha_i}$. By hypothesis $N_{\alpha_i}+\annm(Rn_{\alpha_i})=M$ for all $1\leq i\leq k$. Hence $N+\annm(Rn_{\alpha_i})=M$. Therefore, taking the product of $M$ $k$-times:
\begin{equation*}
\begin{split}
M=M_M\cdots {_MM} & =(N+\annm(Rn_{\alpha_1}))_M\cdots {_M(N+\annm(Rn_{\alpha_k}))}\\
& =N^k+\cdots+(\annm(Rn_{\alpha_1})_M\cdots {_M\annm(Rn_{\alpha_k})})\\
& \leq N+(\annm(Rn_{\alpha_1})_M\cdots {_M\annm(Rn_{\alpha_k})})\\
& \leq N+(\annm(Rn_{\alpha_1})\bigcap\cdots\bigcap\annm(Rn_{\alpha_k})).
\end{split}
\end{equation*}

On the other hand, since $Rx\leq Rn_{\alpha_1}+\cdots+ Rn_{\alpha_k}$ it follows that 
\[\annm(Rn_{\alpha_1}+\cdots +Rn_{\alpha_k})\leq \annm(Rx).\]
Thus, by Lemma \ref{anninter} 
$M\leq N+(\annm(Rn_{\alpha_1})\bigcap\cdots\bigcap\annm (Rn_{\alpha_k}))\leq N+\annm(Rx).$
\end{proof}

From Propositions \ref{psi2} and \ref{psi3} we have the following:

\begin{thm}\label{PSI}
Let $M$ be self-progenerator in $\sm$. Then $\Psi(M)$ is a spatial frame. 
\end{thm}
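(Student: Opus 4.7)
The plan is to apply Theorem \ref{spatiid} with ambient idiomatic-quantale $A=\Lambda(M)$ and $\Lambda=\Psi(M)$. Since $M$ is self-progenerator (in particular projective in $\sm$), the submodule product is associative and distributes over arbitrary sums in both slots, so $\Lambda(M)$ is indeed an idiomatic-quantale; it is moreover compactly generated, because every submodule is a sum of cyclic submodules and each $Rm$ is compact in $\Lambda(M)$. This supplies the ambient hypotheses of Theorem \ref{spatiid}.

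Before invoking that theorem I would first confirm that $\Psi(M)$ is a frame. Propositions \ref{psi2} and \ref{psi3} already give closure under binary intersections and arbitrary sums in $\Lambda(M)$, so $\Psi(M)$ is a complete lattice whose joins are sums and whose binary meets are intersections. The frame distributive law for $\Psi(M)$ then follows once I verify that each $N\in\Psi(M)$ is an infinitely distributive element of $\Lambda(M)$, equivalently that $N\in F(\Lambda(M))$.

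The technical heart is to upgrade the identity of Proposition \ref{psi2} to arbitrary (not necessarily fully invariant) $K\leq M$: for every $N\in\Psi(M)$ and every $K\leq M$,
\[N\cap K=N_MK.\]
The inclusion $N_MK\subseteq N\cap K$ uses only $N\leq_{fi}M$, because any $f\in\Hom(M,K)$ composed with $K\hookrightarrow M$ sends $N$ into $N$. For the reverse, given $n\in N\cap K$, the argument of Proposition \ref{psi1} gives $Rn=N_MRn\leq N_MK$, whence $n\in N_MK$. Combined with the distributivity of the module product over arbitrary sums (valid for $M$ projective in $\sm$), this identity yields
\[N\cap\Bigl(\sum_\alpha K_\alpha\Bigr)=N_M\Bigl(\sum_\alpha K_\alpha\Bigr)=\sum_\alpha N_MK_\alpha=\sum_\alpha (N\cap K_\alpha),\]
so $N$ is infinitely distributive in $\Lambda(M)$. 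This simultaneously places $\Psi(M)$ inside $F(\Lambda(M))$ and delivers the frame law within $\Psi(M)$.

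Finally, Theorem \ref{spatiid} applied to $\Psi(M)\subseteq F(\Lambda(M))$ inside the compactly generated idiomatic-quantale $\Lambda(M)$ yields the injectivity of the indexing morphism $d\colon\Psi(M)\to\mathcal{O}\pt(\Psi(M))$, i.e., the spatiality of $\Psi(M)$. The step I expect to need the most care is the enhanced identity $N\cap K=N_MK$ for arbitrary $K$; with that in hand, everything else is either routine closure bookkeeping (verifying that $\Psi(M)$ is genuinely a sub-idiom of $F(\Lambda(M))$, with joins and finite meets inherited from the ambient) or a direct appeal to the general machinery assembled in Section \ref{subsecidiomq}.
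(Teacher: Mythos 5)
Your proof is correct and follows the paper's own route: distributivity of $\Psi(M)$ via Proposition \ref{psi2}, the frame property via Proposition \ref{disid}, and spatiality via Theorem \ref{spatiid}. The only real difference is your choice of ambient: you embed $\Psi(M)$ into $F(\Lambda(M))$, which forces the upgrade of $N\cap K=N_MK$ to arbitrary $K\leq M$ (your argument for this is correct), whereas the paper works implicitly inside $F(\Lambda^{fi}(M))$ --- compactly generated by the submodules $\overline{Rn}$ --- where Proposition \ref{psi2} as stated already suffices.
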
  
\begin{proof}
From Proposition \ref{psi2}, $\Psi(M)$ is a distributive lattice. Therefore by Lemma \ref{disid} it is a frame. Thus $\Psi(M)$ is an spatial frame by Theorem \ref{spatiid}.
\end{proof}

%
%

We can produce $\Psi(M)$ from another perspective. We can recover $\Psi(M)$ as the fixed point of a suitable operator in $\Lambda^{fi}(M)$. In this case such operator is not an  inflator but a deflator. 
\begin{dfn}\label{LerM}
For each $N\in \Lambda^{fi}(M),$ define 
\[Ler(N)=\{m\in M\mid N+\annm(Rm)=M\}.\]
\end{dfn}


\begin{prop}
Let $M$ be projective in $\sm$. For each $N\in \Lambda^{fi}(M)$, $Ler(N)$ is a submodule of $M$.
\end{prop}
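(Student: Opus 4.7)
The plan is to verify the three closure conditions for a submodule: $0 \in Ler(N)$, closure under scalar multiplication, and closure under addition. The first two are essentially formal; the third is the substantive step and will require both the hypothesis that $M$ is projective in $\sigma[M]$ (so that the product $_M$ distributes over sums and Lemma \ref{anninter} applies) and full invariance of $N$ (so that $N_M K \leq N$ for any submodule $K$).

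For the easy reductions, first note $\annm(R \cdot 0) = \annm(0) = M$, so $0 \in Ler(N)$. For scalar multiplication, if $m \in Ler(N)$ and $r \in R$, then $R(rm) \leq Rm$ gives $\annm(Rm) \leq \annm(R(rm))$, so $M = N + \annm(Rm) \leq N + \annm(R(rm))$, proving $rm \in Ler(N)$.

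The main step is closure under addition. Given $m_{1}, m_{2} \in Ler(N)$, I would begin from $M = M {_M} M = (N + \annm(Rm_{1})) {_M} (N + \annm(Rm_{2}))$ and expand using distributivity of the product over sums (valid because $M$ is projective in $\sigma[M]$, by \cite[Proposition 5.6]{beachy2002m}). The expansion produces four summands; three of them are contained in $N$ because $N$ is fully invariant and $A {_M} B \leq A$ whenever $A \leq_{fi} M$ (applied to $N {_M} N$, $N {_M} \annm(Rm_{2})$, $\annm(Rm_{1}) {_M} N$). The remaining summand $\annm(Rm_{1}) {_M} \annm(Rm_{2})$ is contained in both $\annm(Rm_{1})$ and $\annm(Rm_{2})$ (again using full invariance of the annihilators), hence in their intersection. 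This yields the inequality
\[ M \leq N + \bigl(\annm(Rm_{1}) \cap \annm(Rm_{2})\bigr). \]

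To finish, I would invoke Lemma \ref{anninter} to identify $\annm(Rm_{1}) \cap \annm(Rm_{2}) = \annm(Rm_{1} + Rm_{2})$, and then use $R(m_{1}+m_{2}) \leq Rm_{1} + Rm_{2}$ to conclude $\annm(Rm_{1}) \cap \annm(Rm_{2}) \leq \annm(R(m_{1}+m_{2}))$. Substituting gives $N + \annm(R(m_{1}+m_{2})) = M$, so $m_{1}+m_{2} \in Ler(N)$, completing the verification. The one place that deserves care is justifying $A {_M} B \leq A$ for fully invariant $A$: this follows since each $f \in \Hom(M,B)$ extends via $B \hookrightarrow M$ to an endomorphism of $M$, which must carry $A$ into $A$, so $f(A) \subseteq A$.
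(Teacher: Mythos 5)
Your proof is correct and follows essentially the same route as the paper: expand $M = M_M M = (N+\annm(Rm_1))_M(N+\annm(Rm_2))$ by distributivity of the product (valid since $M$ is projective in $\sigma[M]$) and absorb the resulting summands into $N + \annm(R(m_1+m_2))$. In fact you supply the details the paper's proof leaves implicit — the containment $A\,{_M}B\leq A$ for fully invariant $A$ and the appeal to Lemma \ref{anninter} — so your write-up is, if anything, more complete.
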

\begin{proof}
First, let $l,k\in Ler(N).$ Then $N+\annm(Rl)=M=N+\annm(Rk)$ and thus
$M=M_M M=(N+\annm(Rl))_M (N+\annm(Rk)).$ Using the fact that this product is distributive (because $M$ is projective), it follows that
\[M=N_MN+\annm(Rl)_MN+N_M\annm(Rk)+\annm(Rl)_M\annm(Rk).\]
Hence $M\subseteq N+\annm(R(l+k)).$ Therefore, $l+k\in Ler(N).$

Now, let $r\in R$ and $n\in Ler(N).$ Since $n\in Ler(N),$ we have that $N+\annm(Rn)=M.$ Inasmuch as $R(rn)\subseteq Rn$ it follows that $\annm(Rn)\subseteq \annm(R(rn)).$ Hence,
$M=N+\annm(Rn)\subseteq N+\annm(R(rn)), $ Thus $M=N+\annm(R(rn))$, that is, $rn\in Ler(N).$
\end{proof}

\begin{lem}\label{lercont}
Let $M$ be self-progenerator in $\sm$ and $N\in\Lambda^{fi}(M)$. Then $Ler(N)\leq N$.
\end{lem}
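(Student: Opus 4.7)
The plan is to take an arbitrary element $m \in Ler(N)$ and show that $Rm \le N$, which immediately yields $m \in N$. The hypothesis to exploit is the defining equation $N + \annm(Rm) = M$, and the natural way to convert it into a statement about $Rm$ is to multiply both sides (on the right) by $Rm$ using the product of submodules.

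First I would recall that since $M$ is projective in $\sigma[M]$, the product $_M$ distributes over sums (cited earlier in the preliminaries, \cite[Lemma 1.1]{maustructure}). Multiplying the equality $M = N + \annm(Rm)$ by $Rm$ therefore gives
\[M_M Rm \;=\; N_M Rm \,+\, \annm(Rm)_M Rm.\]
By the very definition of $\annm(Rm)$, the second summand is zero, so $M_M Rm = N_M Rm$.

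Next I would apply the two structural hypotheses. Self-progeneration means $M$ generates every submodule of $M$; in particular $Rm$ is $M$-generated, and a direct check from Definition \ref{pro} shows that this is equivalent to $M_M Rm = Rm$. On the other hand, since $N \le_{fi} M$ and $Rm \le M$, every $f \in \Hom(M, Rm)$ may be regarded as an endomorphism of $M$ (via the inclusion), so $f(N) \subseteq N$; summing, we get $N_M Rm \le N$. Combining these three equalities and inequalities yields
\[Rm \;=\; M_M Rm \;=\; N_M Rm \;\le\; N,\]
which establishes $m \in N$ and hence $Ler(N) \le N$.

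There is no real obstacle here: the argument is essentially a one-line computation once one spots the right move, namely multiplying the decomposition $M = N + \annm(Rm)$ by $Rm$ so that the annihilator collapses to zero. The only subtlety worth flagging explicitly in the write-up is the use of full invariance of $N$ to get $N_M Rm \le N$, since the product $N_M Rm$ is a priori a submodule of $Rm$, not of $N$, and it is fully invariance that forces the inclusion.
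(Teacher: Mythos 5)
Your proof is correct and follows essentially the same route as the paper's: multiply $M = N + \annm(Rm)$ by $Rm$, use distributivity and $\annm(Rm)_M Rm = 0$ to get $Rm = M_M Rm = N_M Rm$, then conclude $N_M Rm \le N$ from full invariance. The extra care you take in justifying $M_M Rm = Rm$ via self-progeneration and $N_M Rm \le N$ via full invariance is exactly the content the paper leaves implicit.
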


\begin{proof}
Let $N\in\Lambda^{fi}(M)$ and $m\in Ler(N)$. Then $M=N+\annm(Rm)$. Therefore,
$Rm=M_MRm=(N+\annm(Rm))_MRm=N_MRm\leq N.$
\end{proof}

\begin{prop}
Let $M$ be projective in $\sm$ and $N_1,...,N_n\in\Lambda^{fi}(M)$. Then
\[Ler(\bigcap_{i=1}^n N_i)=\bigcap_{i=1}^n Ler(N_i).\]
\end{prop}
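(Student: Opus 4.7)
My plan is to prove the two inclusions separately, reducing the nontrivial one to an expansion of an $n$-fold product using the projectivity of $M$ in $\sigma[M]$.

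For the inclusion $Ler(\bigcap_{i=1}^{n} N_i) \subseteq \bigcap_{i=1}^{n} Ler(N_i)$, I would observe that $Ler$ is monotone with respect to $\leq$ on $\Lambda^{fi}(M)$: if $N \leq N'$ and $m \in Ler(N)$, then $M = N + \annm(Rm) \leq N' + \annm(Rm)$, so $m \in Ler(N')$. Applying this to each inclusion $\bigcap_{i=1}^n N_i \leq N_j$ gives the desired containment.

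For the reverse inclusion, fix $m \in \bigcap_{i=1}^{n} Ler(N_i)$, so that $N_i + \annm(Rm) = M$ for every $i$. I would then compute
\[ M = \underbrace{M {_M} M {_M} \cdots {_M} M}_{n \text{ factors}} = (N_1 + \annm(Rm)) {_M} (N_2 + \annm(Rm)) {_M} \cdots {_M} (N_n + \annm(Rm)).\]
Since $M$ is projective in $\sigma[M]$, the product is associative and distributes over arbitrary sums (see \cite[Proposition 5.6]{beachy2002m} and \cite[Lemma 1.1]{maustructure}), so this expands into a sum of $2^n$ $n$-fold products in which the $i$-th slot is either $N_i$ or $\annm(Rm)$.

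The key structural observation is that any such term which contains at least one occurrence of $\annm(Rm)$ lies inside $\annm(Rm)$: this uses that $\annm(Rm)$ is fully invariant (so products of fully invariant submodules are contained in each of their factors, in particular in the $\annm(Rm)$ factor). The single remaining term is $N_1 {_M} N_2 {_M} \cdots {_M} N_n$, which by the same fully invariant argument lies in $\bigcap_{i=1}^{n} N_i$. Collecting,
\[ M \;\leq\; N_1 {_M} \cdots {_M} N_n + \annm(Rm) \;\leq\; \bigcap_{i=1}^{n} N_i + \annm(Rm),\]
so $m \in Ler(\bigcap_{i=1}^{n} N_i)$, finishing the proof.

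The main obstacle is purely bookkeeping: one must justify that the $n$-fold distribution is legitimate and that every ``mixed'' monomial falls into $\annm(Rm)$. Both follow from projectivity of $M$ in $\sigma[M]$ (which makes the product distributive and associative) together with the full invariance of all $N_i$ and of $\annm(Rm)$, recalled in Subsection~\ref{subsecmod}. One could alternatively perform an induction on $n$, reducing to the case $n=2$, where the same computation gives $M = N_1{_M}N_2 + N_1{_M}\annm(Rm) + \annm(Rm){_M}N_2 + \annm(Rm){_M}\annm(Rm) \leq (N_1 \cap N_2) + \annm(Rm)$; I prefer the direct $n$-fold expansion because it avoids re-applying the argument at each inductive step.
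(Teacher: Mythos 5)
Your proof is correct and takes essentially the same approach as the paper: the paper performs the computation $M = (N_i+\annm(Rx))_M(N_j+\annm(Rx)) \leq N_i{_M}N_j + \annm(Rx) \leq N_i\cap N_j + \annm(Rx)$ for pairs and iterates, which is exactly the binary special case you mention at the end, while you carry out the $n$-fold expansion in one step. Both arguments rest on the same facts (associativity and distributivity of the product under projectivity of $M$ in $\sigma[M]$, and $K{_M}L \leq K\cap L$ for fully invariant submodules), and your justification of the easy inclusion via monotonicity of $Ler$ is the routine observation the paper leaves implicit.
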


\begin{proof}
Let $x\in \bigcap_{i=1}^n Ler(N_i)$, hence $M=N_i+\annm(Rx)$ for all $1\leq i\leq n$. 
\[M=M_MM=(N_i+\annm(Rx))_M(N_j+\annm(Rx))\]
\[\leq {N_i}_M{N_j}+\annm(Rx)\leq N_i\cap N_j + \annm(Rx)\]
Thus, $x\in Ler(N_i\cap N_j)$. Therefore $\bigcap_{i=1}^n Ler(N_i)\leq Ler(\bigcap_{i=1}^n N_i).$

We always have, $Ler(\bigcap_{i=1}^n N_i)\leq\bigcap_{i=1}^n Ler(N_i)$.
\end{proof}

\begin{prop}\label{fixpsi}
Let $M$ be self-progenerator in $\sm$. For each $N\in \Lambda^{fi}(M),$ $Ler(N)=N$ if and only if $N\in\Psi(M).$
\end{prop}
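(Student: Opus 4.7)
The statement is essentially a reformulation of the definitions once we have Lemma \ref{lercont} in hand, so the plan is short.

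The plan is to unpack both sides in terms of the containment $N \subseteq Ler(N)$. By the very definition of $\Psi(M)$, saying that $N\in\Psi(M)$ amounts to saying that every $n\in N$ satisfies $N+\annm(Rn)=M$, which by definition of $Ler(N)$ is exactly the containment $N\subseteq Ler(N)$. So the forward direction is immediate: if $Ler(N)=N$, then trivially $N\subseteq Ler(N)$, hence $N\in\Psi(M)$.

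For the converse, assume $N\in\Psi(M)$. Then by the above reformulation we have $N\subseteq Ler(N)$. The reverse containment $Ler(N)\subseteq N$ is precisely the content of Lemma \ref{lercont}, which applies because $M$ is self-progenerator in $\sm$. Combining both containments yields $Ler(N)=N$.

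The only ingredient needed beyond the definitions is Lemma \ref{lercont}, and no further computation is required. There is no real obstacle here; the proposition is a bookkeeping consequence of Definition \ref{psiM}, Definition \ref{LerM}, and Lemma \ref{lercont}.
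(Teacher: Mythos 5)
Your proof is correct and is exactly the argument the paper intends: its own proof simply cites Definition \ref{psiM}, Definition \ref{LerM}, and Lemma \ref{lercont}, and your write-up just makes explicit that $N\in\Psi(M)$ is the containment $N\subseteq Ler(N)$ while Lemma \ref{lercont} supplies the reverse containment. No difference in approach.
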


\begin{proof}
It follows from Definiton \ref{psiM}, Definiton \ref{LerM} and Lemma \ref{lercont}
\end{proof}

%
%

Now we are going to characterize the extreme case when $\Psi(M)=\Lambda^{fi}(M).$

\begin{dfn}\cite[page 89]{goodearlneumann}
Let $R$ be a ring. It is said that $R$ is \emph{biregular} if every cyclic two sided ideal is generated by a central idempotent.
\end{dfn}

\begin{prop}
A ring $R$ is biregular if and only if $R=RaR+Ann_R(RaR)$ for all $a\in R$.
\end{prop}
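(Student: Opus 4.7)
The plan is to prove both directions by analyzing the decomposition $1=e+f$ with $e\in RaR$ and $f\in Ann_R(RaR)$ that the sum identity provides; the element $e$ will turn out to be the central idempotent generating $RaR$.

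For the forward direction, assume $R$ is biregular and fix $a\in R$. By hypothesis there is a central idempotent $e$ with $RaR=Re$. Centrality of $e$ immediately gives $(1-e)\cdot Re=Re-eRe=Re-Re=0$ and similarly $Re\cdot(1-e)=0$, so $1-e\in Ann_R(RaR)$. The trivial decomposition $1=e+(1-e)$ then witnesses $R=RaR+Ann_R(RaR)$.

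For the reverse direction, assume the sum identity holds for every $a$ and fix $a\in R$. Write $1=e+f$ with $e\in RaR$ and $f\in Ann_R(RaR)$. Since $e$ lies in the two-sided ideal $RaR$ that $f$ annihilates on both sides, we have $ef=fe=0$; consequently $e=e(e+f)=e^{2}$, so $e$ is idempotent. To check centrality, take an arbitrary $r\in R$: both $er$ and $re$ lie in $RaR$ (a two-sided ideal), so $(er)f=0$ and $f(re)=0$. These give $er=(er)(e+f)=ere$ and $re=(e+f)(re)=ere$, whence $er=re$. Finally, $e\in RaR$ forces $Re\subseteq RaR$, and for any $x\in RaR$ the relation $xf=0$ yields $x=x(e+f)=xe\in Re$, so $RaR=Re$. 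Thus $RaR$ is generated by the central idempotent $e$ and $R$ is biregular.

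The main delicate point is the verification of centrality of $e$: it depends on $Ann_R(RaR)$ annihilating $RaR$ from \emph{both} sides. With only one-sided annihilation one obtains merely $er=ere$ or $re=ere$ in isolation, which is insufficient to conclude $er=re$.
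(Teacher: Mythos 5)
Your proof is correct and follows essentially the same route as the paper: both directions hinge on writing $1=e+f$ with $e\in RaR$ and $f\in \Ann_R(RaR)$ and showing that $e$ is a central idempotent generating $RaR$. If anything, your version is slightly tidier, since the paper takes the $RaR$-component of $1$ to be a single product $sar$ (rather than a general finite sum $\sum s_i a r_i$) and obtains centrality of that component by first proving $t=1-e$ is central, whereas you verify $er=ere=re$ for $e$ directly.
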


\begin{proof}
Let $a\in R$ and suppose $R=RaR+Ann_R(RaR)$. Then $1=sar+t$ with $r,s\in R$ and $t\in Ann_R(RaR)$. This implies that 
\[sar=(sar)sar+(sar)t=(sar)^2\]
so, $sar$ and $t$ are idempotents. Now, let $k\in R$. We have that
\[0=(sar)kt=(1-t)kt=kt-tkt\]
hence $kt=tkt$. In the same way $tk=tkt$, thus $t $ is central. This implies that $sar$ is central. Notice that $a=asar$, hence $RaR=RsarR$. Thus $R$ is biregular.

Reciprocally, if $a\in R$ there exists a central idempotent $e\in R$ such that $RaR=ReR$ and $R=ReR+R(1-e)R$. Since $e(1-e)=0$ then $R(1-e)\leq Ann_R(RaR)$. Thus $R=RaR+Ann_R(RaR)$. 
\end{proof}

In view of the above result, we introduce  the concept of birregular module as follows.

\begin{dfn} Let $M$ be an $R$-module. We say that $M$ is birregular if \[M=\overline{Rm}+\annm(\overline{Rm}),\] for every $m\in M.$ 

\end{dfn}

\begin{prop}Let $M$ be projective in $\sm$. $M$ is a birregular module if and  only if $\Psi(M)= \Lambda^{fi}(M).$

\end{prop}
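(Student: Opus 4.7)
The plan is to exploit two ingredients already established in the paper. First, Lemma \ref{rayita} tells us that for $M$ projective in $\sigma[M]$, $\ann_M(\overline{Rn})=\ann_M(Rn)$, which is exactly the bridge between birregularity (stated with $\overline{Rm}$) and membership in $\Psi(M)$ (stated with $\ann_M(Rn)$). Second, whenever $N\in\Lambda^{fi}(M)$ contains an element $n$, the minimality of $\overline{Rn}$ as a fully invariant submodule containing $Rn$ gives $\overline{Rn}\subseteq N$. With these two facts the proof reduces to unpacking definitions.

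For the implication $(\Psi(M)=\Lambda^{fi}(M))\Rightarrow$ birregular, I would pick any $m\in M$, observe that $\overline{Rm}\in\Lambda^{fi}(M)=\Psi(M)$, and apply the defining property of $\Psi(M)$ to the element $m\in\overline{Rm}$ to obtain $\overline{Rm}+\ann_M(Rm)=M$. Since $\ann_M(Rm)=\ann_M(\overline{Rm})$ by Lemma \ref{rayita}, this is precisely the birregularity condition at $m$.

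For the converse, fix $N\in\Lambda^{fi}(M)$ and $n\in N$. Birregularity applied at $n$ gives
\[M=\overline{Rn}+\ann_M(\overline{Rn})=\overline{Rn}+\ann_M(Rn),\]
where the last equality uses Lemma \ref{rayita} again. Because $N$ is fully invariant and $Rn\subseteq N$, the smallest fully invariant submodule containing $Rn$ satisfies $\overline{Rn}\subseteq N$, so we can enlarge the first summand to get $M=N+\ann_M(Rn)$. This holds for every $n\in N$, whence $N\in\Psi(M)$; the reverse containment $\Psi(M)\subseteq\Lambda^{fi}(M)$ is by definition.

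There is no genuine obstacle here: the whole content is the identification $\ann_M(Rn)=\ann_M(\overline{Rn})$ together with the observation that full invariance of $N$ forces $\overline{Rn}\subseteq N$. The projectivity hypothesis enters only through Lemma \ref{rayita}.
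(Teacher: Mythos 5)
Your proof is correct and follows essentially the same route as the paper's: both directions rest on Lemma \ref{rayita} ($\annm(\overline{Rn})=\annm(Rn)$) together with the containment $\overline{Rn}\subseteq N$ for $N$ fully invariant containing $n$. You merely make explicit the step $\overline{Rn}\subseteq N$ that the paper leaves implicit in the inclusion $\overline{Rn}+\annm(Rn)\subseteq N+\annm(Rn)$.
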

\begin{proof}
First, suppose that $M$ is a birregular module. Let $N\in\Lambda^{fi}(M)$  and $n\in N.$ By Lemma \ref{rayita} 
 \[M=\overline{Rn}+\annm(\overline{Rn})= \overline{Rn}+\annm(Rn)\subseteq N+\annm(Rn).\]
 So $N\in \Psi(M)$ and thereby $\Psi(M)= \Lambda^{fi}(M).$

Conversely, assume that $\Psi(M)= \Lambda^{fi}(M).$ Then, for each $m\in M$ we have that $\overline{Rm}\in \Psi(M)=\Lambda^{fi}(M),$ consequently by Lemma \ref{rayita},   $M=\overline{Rm}+\annm(Rm).$ Hence, $M$ is birregular.
\end{proof}
%
%

In what follows, we want to study a property of regularity in the frame $\Psi(M)$. 

In order to do that, we  recall some definitions and facts concerning to the regularity property. The background of this topics can see for example in \cite{johnstone1986stone} and \cite{simmons1989compact}. 

\begin{dfn}
Let $\Omega$ be a frame and $x,a\in\Omega$. It is said $x$ is rather below $a$, denoted by $x\eqslantless a$, if $a\vee \neg x=1$. Denote $\Omega^{\neg}$ the set of all $a\in\Omega$ with $a=\bigvee\{x\mid x\eqslantless a\}$.
\end{dfn}

\begin{obs}
It is well known that $\Omega^{\neg}$ is a subframe  of $\Omega$.
\end{obs}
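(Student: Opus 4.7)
The plan is to verify the three conditions defining a subframe of $\Omega$: that $0,1 \in \Omega^{\neg}$, closure under arbitrary joins, and closure under binary meets. The first two are easy bookkeeping; the real content is the meet case, which will rely on a small ``De Morgan'' style lemma for the rather-below relation.

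For the extremes, I would observe that $1 \vee \neg x = 1$ for every $x$, so $x \eqslantless 1$ for all $x \in \Omega$ and hence $\bigvee\{x \mid x \eqslantless 1\} = 1$. Dually, $x \eqslantless 0$ forces $\neg x = 1$, so $x = 0$, and $0 = \bigvee \{0\} \in \Omega^{\neg}$. For joins, I would first record the monotonicity fact that $x \eqslantless a$ and $a \leq b$ imply $x \eqslantless b$ (since $b \vee \neg x \geq a \vee \neg x = 1$). Given a family $\{a_i\}_{i \in I} \subseteq \Omega^{\neg}$, every generator $x \eqslantless a_i$ also satisfies $x \eqslantless \bigvee_i a_i$, so
\[\bigvee_i a_i \;=\; \bigvee_i \bigvee\{x \mid x \eqslantless a_i\} \;\leq\; \bigvee\{x \mid x \eqslantless \textstyle\bigvee_i a_i\} \;\leq\; \bigvee_i a_i,\]
and equality gives $\bigvee_i a_i \in \Omega^{\neg}$.

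The heart of the proof is closure under binary meets, and the main obstacle is the following lemma: if $x \eqslantless a$ and $y \eqslantless b$ then $x \wedge y \eqslantless a \wedge b$. I would prove this by combining the frame-theoretic inequality $\neg x \vee \neg y \leq \neg(x \wedge y)$ (which follows from $(\neg x \vee \neg y)\wedge(x \wedge y)=0$) with frame distributivity applied to the product $(a \vee \neg x)\wedge (b \vee \neg y) = 1$:
\[1 \;=\; (a \wedge b) \vee (a \wedge \neg y) \vee (\neg x \wedge b) \vee (\neg x \wedge \neg y) \;\leq\; (a \wedge b) \vee \neg(x \wedge y).\]
Once this lemma is available, given $a,b \in \Omega^{\neg}$ I would use full frame distributivity to write
\[a \wedge b \;=\; \bigvee_{x \eqslantless a,\ y \eqslantless b}(x \wedge y) \;\leq\; \bigvee\{z \mid z \eqslantless a \wedge b\} \;\leq\; a \wedge b,\]
so $a \wedge b \in \Omega^{\neg}$. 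The only subtle point throughout is the De Morgan inequality for pseudocomplements and the care needed in applying frame distributivity to infinitary joins of generators; everything else is formal.
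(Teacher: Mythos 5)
Your proof is correct; note that the paper itself offers no argument here, simply recording the fact as well known, so there is nothing to compare against --- what you have written is the standard proof one would supply. The only step you use without stating it is that $x \eqslantless a$ implies $x \leq a$ (needed for the upper bounds $\bigvee\{x \mid x \eqslantless c\} \leq c$ in both the join and meet cases); this follows at once from $x = x \wedge (a \vee \neg x) = (x \wedge a) \vee (x \wedge \neg x) = x \wedge a$. With that remark inserted, the verification of the extremes, the monotonicity argument for arbitrary joins, and the De Morgan--type lemma $x \eqslantless a,\ y \eqslantless b \Rightarrow x \wedge y \eqslantless a \wedge b$ combined with frame distributivity for binary meets together give a complete and correct proof.
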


\begin{dfn}
It is said that a frame $\Omega$ is \emph{regular} if $\Omega^{\neg}=\Omega$.
\end{dfn}
In general for a frame the subframe $\Omega^{\neg}$ not need to be regular there is an enhancement to this situation. 
We can associate to any idiomatic-quantale $A$, a regular frame $A^{reg}$ called \emph{the regular core} of $A$ \cite{simmons1989compact}. The construction of this regular frame is as follows:

Denote by $A^{r}$ the set of fixed points of the operator $r\colon A\rightarrow A$, given by $r(a)=\bigvee\{x\in A\mid a\vee x^{r}=1\}$ where $x^{r}=\bigvee\{y\mid yx=0\}$. Now, $A^{r}$ has his own $r$ deflator hence we can consider $A^{r(2)}:=(A^{r})^{r}$. Inductively, it is defined:
\[A^{r(0)}:=A\;\; A^{r(\alpha+1)}:= A^{r(\alpha)r}\;\; A^{r(\lambda)}:=\bigcap\{A^{r(\alpha)}\mid\alpha<\lambda\}\] 
for each non-limit ordinal $\alpha$ and limit ordinal $\lambda$ respectively. This chain is decreasing, therefore by a cardinality argument it eventually stabilizes in some ordinal. Let us denote the least of those ordinals by $\infty$ and $A^{reg}:=A^{r(\infty)}$.

In  \cite[Theorem 3.4]{simmons1989compact} it is proved that $A^{reg}$ is a regular frame and every regular subframe of $A$ is contained in it. 

\begin{prop}
Let $M$ be projective in $\sm$ and denote $A=\Lambda^{fi}(M)$. Then the deflator $r:A\to A$ is $Ler$ and $A^r=\Psi(M)$.
\end{prop}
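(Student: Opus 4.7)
The plan is to unpack the definition of the deflator $r$ in the concrete idiomatic-quantale $A=\Lambda^{fi}(M)$, identify the pseudocomplement $x^{r}$ with the annihilator $\annm(x)$, and then bootstrap the identity $r(N)=Ler(N)$ through a careful use of fully invariant closures.

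First I would observe that, since $M$ is projective in $\sigma[M]$, the product $\_ \cdot_M \_$ restricted to $\Lambda^{fi}(M)\times\Lambda^{fi}(M)$ lands in $\Lambda^{fi}(M)$ and distributes over arbitrary sums in each variable (\cite{beachy2002m}, \cite[Remark~4.2]{mauquantale}). Therefore, for $N\in\Lambda^{fi}(M)$ the collection $\{Y\in\Lambda^{fi}(M)\mid Y_M N=0\}$ is closed under arbitrary joins, so
\[N^{r}=\bigvee\{Y\in\Lambda^{fi}(M)\mid Y_M N=0\}=\annm(N),\]
by the defining property of $\annm$. Substituting this into the definition of $r$ gives
\[r(N)=\bigvee\{X\in\Lambda^{fi}(M)\mid N+\annm(X)=M\}.\]

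The heart of the proof is to show this equals $Ler(N)$. For the inclusion $r(N)\subseteq Ler(N)$, take any $X\in\Lambda^{fi}(M)$ participating in the supremum and any $m\in X$. Since $X$ is fully invariant, $\overline{Rm}\subseteq X$, hence $\annm(X)\subseteq\annm(\overline{Rm})$; but $\annm(\overline{Rm})=\annm(Rm)$ by Lemma~\ref{rayita}, so $N+\annm(Rm)=M$ and thus $m\in Ler(N)$. Therefore $X\subseteq Ler(N)$ for every such $X$, which yields $r(N)\subseteq Ler(N)$. For the reverse inclusion, given $m\in Ler(N)$, apply Lemma~\ref{rayita} again to conclude $N+\annm(\overline{Rm})=M$; the submodule $\overline{Rm}$ is fully invariant, so it is eligible in the supremum defining $r(N)$, and since $m\in\overline{Rm}$, we get $m\in r(N)$.

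As a by-product, $Ler(N)=r(N)$ is a join of fully invariant submodules and hence is itself fully invariant, justifying that $r:A\to A$ really \emph{is} $Ler$ viewed as an operator on $\Lambda^{fi}(M)$. The equality $A^{r}=\Psi(M)$ is then immediate from Proposition~\ref{fixpsi}, since a fixed point of $r$ is precisely a fixed point of $Ler$, and these were characterized as the members of $\Psi(M)$. The main delicate point is the identification $N^{r}=\annm(N)$, which relies crucially on the fact that for $M$ projective in $\sigma[M]$ the product of fully invariant submodules distributes over arbitrary sums; once this is in hand, the rest of the argument reduces to the trick of replacing an element $m\in X$ (or $m\in Ler(N)$) by its fully invariant closure $\overline{Rm}$ and invoking Lemma~\ref{rayita}.
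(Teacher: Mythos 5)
Your proof is correct and follows essentially the same route as the paper's: identify $K^{r}$ with $\annm(K)$, establish the two inclusions between $r(N)$ and $Ler(N)$, and conclude via Proposition~\ref{fixpsi}. Your treatment of the inclusion $Ler(N)\subseteq r(N)$ is in fact more careful than the paper's (which simply asserts it): passing to the fully invariant closure $\overline{Rm}$ and invoking Lemma~\ref{rayita} is exactly the detail needed to make that step airtight, since $Rm$ itself need not lie in $\Lambda^{fi}(M)$.
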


\begin{proof}
Denote $A=\Lambda^{fi}(M)$ and let $N\in A$. So, $r(N)=\sum\left\{K{\in} A\mid N{+}{K^r}{=}M\right\}$ where $K^r=\sum\{L\in A\mid L_MK=0\}=\annm(K)$. Hence, \[r(N)=\sum\{K\in A\mid N+\annm(K)=M\}.\]

By definition, $Ler(N)=\{m\in M\mid N+\annm(Rm)=M\}$. Then
\[Ler(N)\subseteq\sum\{K\in A\mid N+\annm(K)=M\}.\]
On then other hand, if $K\in A$ is such that $M=N+\annm(K)$ then for every $k\in K$, $N+\annm(Rk)=M$. Hence $K\subseteq Ler(N)$. Thus $r(N)=Ler(N)$. This implies that $A^r=\Psi(M)$ by Proposition \ref{fixpsi}.
\end{proof}

In what follows, we want to study the case when $\Psi(M)$ is the regular core of $\Lambda^{fi}(M)$. At this moment, we just have sufficient conditions for this to happen.

\begin{thm}\label{negpsi}
Let $M$ be projective in $\sm$. If $Ler(\Ann_{M}(K))=\Ann_{M}(K)$ for all $K\in\Lambda^{fi}(M)$ then $\Psi(M)=(\Lambda^{fi}(M))^{reg}$.
\end{thm}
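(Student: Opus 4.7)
The plan is to show that $\Psi(M)$ is itself a regular frame; since $\Lambda^{fi}(M)^{reg}$ is by construction the largest regular subframe of $\Lambda^{fi}(M)$ and the previous proposition yields $\Lambda^{fi}(M)^{reg}\subseteq\Lambda^{fi}(M)^{r}=\Psi(M)$, this will give the equality $\Psi(M)=\Lambda^{fi}(M)^{reg}$.

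First I would compute the pseudocomplement in the frame $\Psi(M)$. By Proposition \ref{psi2}, the quantale product collapses to the meet on $\Psi(M)$, so for $K\in\Psi(M)$ the pseudocomplement $\neg K$ is the join in $\Psi(M)$ of $\{L\in\Psi(M)\mid L\cap K=0\}$. The hypothesis $Ler(\annm(K))=\annm(K)$ says exactly that $\annm(K)\in\Psi(M)$ for every $K\in\Lambda^{fi}(M)$. Combining this with the defining identity $\annm(K)_{M}K=0$ rewritten via Proposition \ref{psi2} as $\annm(K)\cap K=0$, I would conclude that $\annm(K)$ is the largest element of $\Psi(M)$ meeting $K$ trivially, whence $\neg K=\annm(K)$. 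The rather-below relation in $\Psi(M)$ therefore reads: $K\eqslantless N$ iff $K\in\Psi(M)$ and $N+\annm(K)=M$.

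To establish regularity I must exhibit, for each $N\in\Psi(M)$, enough rather-below elements to recover $N$ as their join. For every $n\in N$ I would introduce the candidate $L_{n}:=\annm(\annm(Rn))$ and check three things: (i) $L_{n}\in\Psi(M)$ by the hypothesis applied to $K=\annm(Rn)$; (ii) by the triple-annihilator identity $\annm\annm\annm=\annm$, we have $\annm(L_{n})=\annm(Rn)$, so $N+\annm(L_{n})=N+\annm(Rn)=M$ because $n\in N\in\Psi(M)$, i.e.\ $L_{n}\eqslantless N$; and (iii) $n\in L_{n}$ since $Rn\leq L_{n}$. Consequently
\[N=\sum_{n\in N}Rn\leq\sum_{n\in N}L_{n}\leq\bigvee\{K\in\Psi(M)\mid K\eqslantless N\}.\]
The reverse inequality is automatic in a distributive lattice (since $K\eqslantless N$ forces $K\leq N$), so $N$ equals this join, showing $\Psi(M)$ is regular.

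I expect the main obstacle to be the identification $\neg K=\annm(K)$ in $\Psi(M)$; once the hypothesis is correctly read as ``annihilators lie in $\Psi(M)$'' and Proposition \ref{psi2} is invoked to trade products for intersections, the rest is a routine manipulation using the triple-annihilator identity. After regularity of $\Psi(M)$ is established, the inclusion $\Psi(M)\subseteq\Lambda^{fi}(M)^{reg}$ follows from the universal property of the regular core recalled from \cite{simmons1989compact}, and combined with the reverse inclusion it yields the desired equality.
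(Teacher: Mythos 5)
Your proof is correct and is essentially the paper's argument: both hinge on reading the hypothesis as ``all annihilators lie in $\Psi(M)$,'' deducing $B\subseteq\annm(\annm(B))$ (and $\annm^{3}=\annm$) via Proposition \ref{psi2}, and then using double annihilators such as $\annm(\annm(Rn))$ as the rather-below witnesses that recover each $N\in\Psi(M)$. The only difference is packaging: the paper verifies $(\Lambda^{fi}(M)^{r})^{r}=\Lambda^{fi}(M)^{r}$ so that the regular-core chain stabilizes at the first step, whereas you verify regularity of $\Psi(M)$ directly and then invoke the universal property of the regular core together with $(\Lambda^{fi}(M))^{reg}\subseteq\Lambda^{fi}(M)^{r}=\Psi(M)$ --- the same computation either way.
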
 

\begin{proof}
Denote $A=\Lambda^{fi}(M)$. Let us show that $(A^r)^r=A^r$. Consider $r:A^r\to A^r.$ For $N\in A^r$ $r(N)=\sum\{K\in A^r\mid N+K^r=Ler(M)\}$
where $K^r=\sum\{L\in A^r\mid L_MK=0.\}$

Notice that $Ler(M)=M$. By hypothesis $Ler(\annm(K))=\annm(K)$ for all $K\in A$. Hence $\annm(K)\in A^r$ for all $K\in A^r$. Thus \[r(N)=\sum\{K\in A^r\mid N+\annm(K)=M\}.\]

Let $B\in A$ such that $N+\annm(B)=M$. By Proposition \ref{psi2},
\[B_M\annm(B)\subseteq B\cap\annm(B)=\annm(B)_MB=0.\]
Hence, $B\subseteq\annm(\annm(B))$. So,
\[M=N+\annm(B)\subseteq N+\annm(\annm(\annm(B))).\]
By hypothesis, $\annm(\annm(B))\in A^r$ then,
\begin{equation*}
\begin{split}
N& =\sum\{B\in A\mid N+\annm(B)=M\}\\
& \subseteq\sum\{\annm(\annm(B))\in A^r\mid N+\annm(\annm(\annm(B)))=M\}\\
& \subseteq r(N).
\end{split}
\end{equation*}
Thus, $r(N)=N$ and $(A^r)^r=A^r$.
\end{proof}

%
\begin{lem}\label{semi}
Let $M$ be projective in $\sm$ and semiprime , if \[M=N\oplus L\] with $N\in\Lambda^{fi}(M)$ then $L\in\Lambda^{fi}(M)$
\end{lem}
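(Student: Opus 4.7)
The plan is to show $\Hom_R(L,N) = 0$, which immediately gives full invariance of $L$: for any $f \in \End_R(M)$ and $l \in L$, the decomposition $f(l) = a + b$ along $M = N \oplus L$ defines an $R$-homomorphism $\psi\colon L \to N$ by $l \mapsto a$, and $\psi = 0$ forces $f(l) = b \in L$. In turn, since every $\psi \in \Hom_R(L,N)$ extends via $\psi \circ \pi_L$ to a morphism $M \to N$ with the same image $\psi(L)$, the vanishing of $\Hom_R(L,N)$ reduces to showing
\[L_MN \;=\; \sum\{\varphi(L) \mid \varphi \in \Hom_R(M,N)\} \;=\; 0.\]

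To achieve this, I would first establish the ``easy'' direction $N_ML = 0$. Indeed, for any $g \in \Hom_R(M,L)$, composing with the inclusion $L \hookrightarrow M$ produces an endomorphism of $M$, which by the full invariance of $N$ must carry $N$ into $N$; but its image lies in $L$, so $g(N) \subseteq N \cap L = 0$. Next, exploiting associativity of the product $-_M-$ (which is available because $M$ is projective in $\sm$, see \cite[Proposition 5.6]{beachy2002m}), the central computation becomes
\[(L_MN)_M(L_MN) \;=\; L_M\bigl((N_ML)_MN\bigr) \;=\; L_M(0_MN) \;=\; 0.\]
Since $M$ is semiprime, Proposition \ref{1.14} applied to the (trivially fully invariant) semiprime submodule $0$ guarantees that any submodule $K \leq M$ with $K_MK = 0$ satisfies $K = 0$; taking $K = L_MN$ finishes the argument.

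The main subtlety is identifying the correct bracketing of the iterated product so that $N_ML$, the factor annihilated by full invariance of $N$, sits in the middle; once this grouping is spotted, everything else reduces to an associativity manipulation and a single invocation of semiprimeness via Proposition \ref{1.14}. The asymmetry of $-_M-$ is essential here: we cannot directly conclude $L_MN = 0$ from $N_ML = 0$, but we can do so after one squaring step.
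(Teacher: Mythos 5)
Your argument is correct, and its engine is the same as the paper's: you first get $N_ML=0$ from the full invariance of $N$ together with $N\cap L=0$, and you then use semiprimeness of $M$ to upgrade this to $L_MN=0$ (the paper asserts the implication $N_ML=0\Rightarrow L_MN=0$ in one line; your computation $(L_MN)_M(L_MN)=L_M\bigl((N_ML)_MN\bigr)=0$ followed by Proposition \ref{1.14} with the semiprime submodule $0$ is precisely the justification of that line, and the associativity you invoke is available since $M$ is projective in $\sm$). The only divergence is the endgame. The paper converts $L_MN=0$ into full invariance by computing $\overline{L}=L_MM=L_MN+L_ML=L^2\leq L$, so that $L$ equals the least fully invariant submodule containing it; this uses distributivity of the product over the sum $M=N+L$ and the description $\overline{L}=L_MM$. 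You instead observe that $L_MN=0$ forces $\Hom_R(L,N)=0$ (any $\psi\colon L\to N$ factors through $\psi\circ\pi_L\in\Hom_R(M,N)$, whose image on $L$ lands in $L_MN$), and then read off full invariance from the decomposition $f(l)=a+b$ along $M=N\oplus L$. Your finish is slightly more hands-on and avoids both the distributivity of the product and the $\overline{(\ \cdot\ )}$ formalism; the paper's is shorter once that machinery is taken for granted. Both are complete proofs.
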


\begin{proof}
Since $N_{M}L\subseteq N\cap L=0,$ then $N_{M}L=0.$
Now $M$ is semiprime then $N_{M}L=0=L_{M}N$, thus $L_{M}M=L_{M}N\oplus L^{2}=L^{2}\leq L$ then $L=L_{M}M$.
\end{proof}

\begin{cor}\label{Semiber}
Let $M$ be projective in $\sm$ and  semiprime. If $\annm(N)$ is a direct summand of $M$ for all $N\in\Lambda^{fi}(M)$ then $\Psi(M)=(\Lambda^{fi}(M))^{reg}$.
\end{cor}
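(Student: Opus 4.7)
The plan is to reduce to Theorem~\ref{negpsi}: I want to verify that $Ler(\annm(K)) = \annm(K)$ for every $K \in \Lambda^{fi}(M)$. Once this is in hand, the corollary $\Psi(M) = (\Lambda^{fi}(M))^{reg}$ is immediate. So fix $K \in \Lambda^{fi}(M)$. By the direct-summand hypothesis, there exists a submodule $L$ of $M$ with $M = \annm(K) \oplus L$. Since $M$ is semiprime and projective in $\sm$, and $\annm(K)$ is fully invariant, Lemma~\ref{semi} upgrades this decomposition by forcing $L$ itself to be fully invariant.

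With both $L$ and $\annm(K)$ fully invariant and $L \cap \annm(K) = 0$, the standard fact that the $M$-product of two fully invariant submodules lies in their intersection yields $L_M \annm(K) \leq L \cap \annm(K) = 0$. This is the key identity. For the nontrivial containment $\annm(K) \subseteq Ler(\annm(K))$, I take any $n \in \annm(K)$; then $Rn \leq \annm(K)$, so $L_M Rn \leq L_M \annm(K) = 0$, which says $L \leq \annm(Rn)$. Hence $M = \annm(K) + L \leq \annm(K) + \annm(Rn)$, i.e.\ $n \in Ler(\annm(K))$. The reverse inclusion $Ler(\annm(K)) \leq \annm(K)$ is exactly Lemma~\ref{lercont} applied to $N = \annm(K)$. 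Combining both sides supplies the hypothesis of Theorem~\ref{negpsi}, so the corollary follows at once.

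The main obstacle is pinpointing the role of each hypothesis. The direct-summand assumption alone only splits $M$ as an ordinary module, and on its own gives no control over the product in $\Lambda^{fi}(M)$. Semiprimeness is precisely the ingredient that Lemma~\ref{semi} uses to promote the decomposition $M = \annm(K) \oplus L$ to one by two \emph{fully invariant} summands, and once this promotion is in place, the fully invariant product calculus and the definition of $Ler$ deliver the verification required by Theorem~\ref{negpsi} without further work.
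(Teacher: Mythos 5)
Your proof is correct and follows essentially the same route as the paper: split $M=\annm(K)\oplus L$, invoke Lemma~\ref{semi} to make $L$ fully invariant, deduce $L\leq\annm(Rn)$ for each $n\in\annm(K)$ so that $\annm(K)\leq Ler(\annm(K))$, and conclude via Theorem~\ref{negpsi}. The only (cosmetic) difference is that the paper first identifies $L$ with $\annm(\annm(K))$ before drawing the same conclusion, whereas you work with $L$ directly.
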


\begin{proof}
Let $N\in\Lambda^{fi}(M)$ then $M=\Ann_{M}(N)\oplus L$ by Lemma \ref{semi} we know that $L\in\Lambda^{fi}(M)$ thus $L\subseteq \Ann_{M}(\Ann_{M}(N))$ then 

\[\Ann_{M}(\Ann_{M}(N))=\Ann_{M}(\Ann_{M}(N))\cap M=\]\[ \Ann_{M}(\Ann_{M}(N))\cap (\Ann_{M}(N)\oplus L)=L.\]

\noindent  Thus $M{=}\Ann_{M}(N)\oplus \Ann_{M}(\Ann_{M}(N)).$ Then $\Ann_{M}(N)\leq Ler(\Ann_{M}(N))$. Thus $\Ann_{M}(N)= Ler(\Ann_{M}(N))$. By Theorem \ref{negpsi}, $\Psi(M)=(\Lambda^{fi}(M))^{reg}$.
\end{proof}

\begin{obs}
In \cite[Definition 3.2]{rizvibaer} are defined \emph{quasi-Baer modules}. Given $N\in\Lambda^{fi}(M)$, $\Hom_R(M,N)$ is a two-sided ideal of $\End_R(M)$. Then by \cite[Remark 3.3]{rizvibaer} in every quasi-Baer module $M$, $\annm(N)$ is a direct summand for all $N\in\Lambda^{fi}(M)$.
\end{obs}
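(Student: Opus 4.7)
The plan is to establish the remark in three pieces: first verify that $\Hom_R(M,N)$ is a two-sided ideal of $S := \End_R(M)$ when $N\leq_{fi}M$, then identify $\annm(N)$ with the right annihilator in $M$ of that ideal, and finally invoke the defining property of a quasi-Baer module. Since the claim is labeled only as a remark, no heavy machinery should be needed beyond bookkeeping with compositions and the definition cited from \cite{rizvibaer}.

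For the ideal verification, first note $\Hom_R(M,N)\subseteq S$ because $N\leq M$, and closure under addition is immediate. Given $\varphi\in S$ and $\psi\in\Hom_R(M,N)$, the composite $\varphi\circ\psi$ satisfies $(\varphi\circ\psi)(M)\subseteq \varphi(N)\subseteq N$, where the last containment uses that $N$ is fully invariant; so $\varphi\circ\psi\in\Hom_R(M,N)$. Conversely, $(\psi\circ\varphi)(M)\subseteq \psi(M)\subseteq N$, so $\psi\circ\varphi\in\Hom_R(M,N)$ as well. Hence $\Hom_R(M,N)$ is a two-sided ideal of $S$, as claimed in the statement.

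Next I would unfold the definition of $\annm(N)$ recalled in Section~\ref{sec2}:
\[\annm(N)=\bigcap\{\ker(f)\mid f\in\Hom_R(M,N)\}=\{m\in M\mid f(m)=0\text{ for every }f\in\Hom_R(M,N)\}.\]
This expresses $\annm(N)$ as exactly the right annihilator in $M$ of the two-sided ideal $\Hom_R(M,N)$ of $S$, in the sense used in the quasi-Baer definition. Applying \cite[Remark 3.3]{rizvibaer} (which records that, by the very definition of quasi-Baer, the right annihilator in $M$ of any two-sided ideal of $\End_R(M)$ is a direct summand of $M$) to the ideal $\Hom_R(M,N)$ yields that $\annm(N)$ is a direct summand of $M$.

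The only mildly delicate point is matching conventions between \cite{rizvibaer} and the present paper: the cited work formulates the quasi-Baer condition in terms of idempotents of $\End_R(M)$ annihilating certain ideal-generated submodules, and one must confirm that the resulting direct summand is the same as $\{m\in M\mid \Hom_R(M,N)\cdot m=0\}$. Since endomorphisms act on the left of $M$ and $\Hom_R(M,N)$ is closed under left multiplication by $S$, this identification is routine, and no further argument is needed.
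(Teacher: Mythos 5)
Your argument is correct and follows exactly the route the paper intends: you verify that full invariance of $N$ makes $\Hom_R(M,N)$ a two-sided ideal of $\End_R(M)$ (noting, rightly, that only the composite $\varphi\circ\psi$ actually needs $\varphi(N)\subseteq N$), identify $\annm(N)=\bigcap\{\ker f\mid f\in\Hom_R(M,N)\}$ with the right annihilator in $M$ of that ideal, and then apply the quasi-Baer property as recorded in the cited Remark 3.3. The paper states this only as a remark with the same citation and no further detail, so your write-up is simply a fuller version of the same reasoning.
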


\section{Acknowledgements}
This investigation is the result of a two-semester research seminar about the notes \cite{simmonssome}, the authors are thankful to Professor Harold Simmons who kindly provide to us that set of notes and many other interesting subjects around this.

The authors are very thankful to the referee for his/her comments on this paper that helped to improve it. 

This work was supported by the grant UNAM-DGAPA-PAPIIT IN10057.

\bibliographystyle{amsalpha}

\bibliography{biblio}

\footnotesize
 \vskip2mm
\noindent	 \textsc{Mauricio Medina B\'arcenas$^{\ast}$,  LorenaMorales Callejas$^{\star}$, Angel Zald\'ivar Corichi$^{\dag}$.}\\
Instituto de Matem\'aticas, Universidad Nacional Aut\'onoma de M\'exico, \'area de Investigaci\'on Cient\'ifica, Circuito Exterior, C.U., 04510, M\'exico, D.F., M\'exico.\vspace{5pt}

\noindent \textsc{Martha Lizbeth Shaid Sandoval Miranda$^{(\ddag)}$.}\\
Facultad de Ciencias, Universidad Nacional Aut\'onoma de M\'exico.\\
Circuito Exterior, Ciudad Universitaria, 04510, M\'exico, D.F., M\'exico.
\begin{itemize}
\item[ e-mails:]  $\ast$ mmedina@matem.unam.mx 
\item[\hspace{33pt}] $\star$ lore.m@ciencias.unam.mx
 \item[\hspace{33pt}]  $\dag$ zaldivar@matem.unam.mx 
\item[\hspace{33pt}] $\ddag$ marlisha@ciencias.unam.mx
\end{itemize}

\end{document}